\newtheorem{theorem}{Theorem}[section]
\newtheorem{corollary}[theorem]{Corollary}
\newtheorem{lemma}[theorem]{Lemma}
\newtheorem{proposition}[theorem]{Proposition}
\newtheorem{example}{Example}
\newtheorem{remark}[theorem]{Remark}
\newcommand\nc\newcommand
\nc{\cA}{\mathcal{A}}\nc{\cB}{\mathcal{B}}\nc{\cC}{\mathcal{C}}\nc{\cD}{\mathcal{D}}
\nc{\cE}{\mathcal{E}}\nc{\cF}{\mathcal{F}}\nc{\cG}{\mathcal{G}}\nc{\cH}{\mathcal{H}}
\nc{\cI}{\mathcal{I}}\nc{\cJ}{\mathcal{J}}\nc{\cK}{\mathcal{K}}\nc{\cL}{\mathcal{L}}
\nc{\cM}{\mathcal{M}}\nc{\cN}{\mathcal{N}}\nc{\cO}{\mathcal{O}}\nc{\cP}{\mathcal{P}}
\nc{\cQ}{\mathcal{Q}}\nc{\cR}{\mathcal{R}}\nc{\cS}{\mathcal{S}}\nc{\cT}{\mathcal{T}}
\nc{\cU}{\mathcal{U}}\nc{\cV}{\mathcal{V}}\nc{\cW}{\mathcal{W}}\nc{\cX}{\mathcal{X}}
\nc{\cY}{\mathcal{Y}}\nc{\cZ}{\mathcal{Z}}
\nc{\bba}{\mathbf{a}}\nc{\bbb}{\mathbf{b}}\nc{\bbc}{\mathbf{c}}\nc{\bbd}{\mathbf{d}}
\nc{\bbe}{\mathbf{e}}\nc{\bbf}{\mathbf{f}}\nc{\bbg}{\mathbf{g}}\nc{\bbh}{\mathbf{h}}
\nc{\bbi}{\mathbf{i}}\nc{\bbj}{\mathbf{j}}\nc{\bbk}{\mathbf{k}}\nc{\bbl}{\mathbf{l}}
\nc{\bbm}{\mathbf{m}}\nc{\bbn}{\mathbf{n}}\nc{\bbo}{\mathbf{o}}\nc{\bbp}{\mathbf{p}}
\nc{\bbq}{\mathbf{q}}\nc{\bbr}{\mathbf{r}}\nc{\bbs}{\mathbf{s}}\nc{\bbt}{\mathbf{t}}
\nc{\bbu}{\mathbf{u}}\nc{\bbv}{\mathbf{v}}\nc{\bbw}{\mathbf{w}}\nc{\bbx}{\mathbf{x}}
\nc{\bby}{\mathbf{y}}\nc{\bbz}{\mathbf{z}}
\nc{\bbA}{\mathbf{A}}\nc{\bbB}{\mathbf{B}}\nc{\bbC}{\mathbf{C}}\nc{\bbD}{\mathbf{D}}
\nc{\bbE}{\mathbf{E}}\nc{\bbF}{\mathbf{F}}\nc{\bbG}{\mathbf{G}}\nc{\bbH}{\mathbf{H}}
\nc{\bbI}{\mathbf{I}}\nc{\bbJ}{\mathbf{J}}\nc{\bbK}{\mathbf{K}}\nc{\bbL}{\mathbf{L}}
\nc{\bbM}{\mathbf{M}}\nc{\bbN}{\mathbf{N}}\nc{\bbO}{\mathbf{O}}\nc{\bbP}{\mathbf{P}}
\nc{\bbQ}{\mathbf{Q}}\nc{\bbR}{\mathbf{R}}\nc{\bbS}{\mathbf{S}}\nc{\bbT}{\mathbf{T}}
\nc{\bbU}{\mathbf{U}}\nc{\bbV}{\mathbf{V}}\nc{\bbW}{\mathbf{W}}\nc{\bfX}{\mathbf{X}}
\nc{\bbY}{\mathbf{Y}}\nc{\bbZ}{\mathbf{Z}}
\nc{\sA}{\mathsf{A}}\nc{\sB}{\mathsf{B}}\nc{\sC}{\mathsf{C}}\nc{\sD}{\mathsf{D}}
\nc{\sE}{\mathsf{E}}\nc{\sF}{\mathsf{F}}\nc{\sG}{\mathsf{G}}\nc{\sH}{\mathsf{H}}
\nc{\sI}{\mathsf{I}}\nc{\sJ}{\mathsf{J}}\nc{\sK}{\mathsf{K}}\nc{\sL}{\mathsf{L}}
\nc{\sM}{\mathsf{M}}\nc{\sN}{\mathsf{N}}\nc{\sO}{\mathsf{O}}\nc{\sP}{\mathsf{P}}
\nc{\sQ}{\mathsf{Q}}\nc{\sR}{\mathsf{R}}\nc{\sS}{\mathsf{S}}\nc{\sT}{\mathsf{T}}
\nc{\sU}{\mathsf{U}}\nc{\sV}{\mathsf{V}}\nc{\sW}{\mathsf{W}}\nc{\sX}{\mathsf{X}}
\nc{\sY}{\mathsf{Y}}\nc{\sZ}{\mathsf{Z}}
\newcommand{\abs}[1]{\left|#1\right|}
\newcommand{\parenv}[1]{\left( #1 \right)}
\nc{\set}[1]{\llbracket #1 \rrbracket}
\newcommand{\bal}[1]{\begin{align}\label{#1}}
\newcommand{\eal}{\end{align}}
\renewcommand{\leq}{\leqslant}
\renewcommand{\geq}{\geqslant}
\renewcommand{\Bbb}{\mathbb}
\renewcommand{\Bbb}{\mathbb}
\newcommand{\Fq}{{{\Bbb F}}_{\!q}}
\theoremstyle{definition}
\newtheorem{definition}{Definition}[section]
\newcommand{\qbinom}[2]{\genfrac{[}{]}{0pt}{}{#1}{#2}}
\title{A point-variety incidence theorem over finite fields, and its applications}
\author{Xiangliang~Kong and Itzhak~Tamo%
\thanks{X. Kong (rongxlkong@gmail.com) and I. Tamo (tamo@tauex.tau.ac.il) are with the Department of Electrical Engineering-Systems, Tel Aviv University, Tel Aviv-Yafo 6997801, Israel. This work was supported by the European Research Council (ERC) under Grant 852953.}
}
\date{}
\begin{document}

\maketitle

\begin{abstract}
Incidence problems between geometric objects is a key area of focus in the field of discrete geometry. Among them, the study of incidence problems over finite fields have received a considerable amount of attention in recent years. 

In this paper, by characterizing the singular values and singular vectors of the corresponding incidence matrix through group algebras, we prove a bound on the number of incidences between points and varieties of a certain form over finite fields. Our result leads to a new incidence bound for points and flats in finite geometries, which improves previous results for certain parameter regimes. As another application of our point-variety incidence bound, we extend a result on pinned distance problems by Phuong, Thang, and Vinh, and independently by Cilleruelo, Iosevich, Lund, Roche-Newton, and Rudnev, under a weaker condition.
\end{abstract}

\section{Introduction}

As one of the central topics in discrete geometry, the structure of incidences between points and various geometric objects have been extensively studied over the decades. As the starting point of this series of studies, in 1983, Szemer\'{e}di and Trotter \cite{ST83} proved that the number of incidences $I(\cP,\cL)$ between a set of points $\cP$ and a set of lines $\cL$ in the $2$-dimensional real plane $\mathbb{R}^2$ is bounded by $O(|\cP|^{\frac{2}{3}}|\cL|^{\frac{2}{3}}+|P|+|L|)$. Apart from being an interesting result in itself, the Szemer\'{e}di-Trotter theorem has been applied to numerous problems and has various extensions and generalizations (see, for examples, \cite{TV06,Dvir12,Sheffer22,Kharazishvili24}).

Let $q$ be a prime power and denote $\Fq$ as the finite field with $q$ elements. Analogous results to the Szemer\'{e}di-Trotter theorem for point-line incidences in $\Fq^2$ were first considered by Bourgain, Katz, and Tao \cite{BKT04} due to their close connection to sum-product estimates. Following this work, incidence problems over finite fields have been extensively studied (see, for examples, \cite{IK09, Vinh11,HR11,Jones12,Grosu14,Kollar15,Lewko15,Lewko19,KPV21,KLP22}). 

As one of the point-line incidence bounds over finite fields, Vinh \cite{Vinh11} proved that $\abs{I(\cP,\cL)-|\cP||\cL|/q}\leq \sqrt{q|\cP||\cL|}$ for a point set $\cP$ and a line set $\cL$ in $\Fq^2$. This bound is shown to be tight when $|\cP|=|\cL|=q^{3/2}$. By observing that $I(\cP, \cL)$ equals the number of edges in the point-line incidence graph over $\Fq^2$ induced by $\cP$ and $\cL$, Vinh \cite{Vinh11} proved his result using the famous expander mixing lemma by Alon and Chung \cite{AC88}.

Inspired by Vinh's work \cite{Vinh11}, using spectral graph theory to bound incidences between points and various geometric objects over finite fields has become a standard method. For examples, by generalizing Vinh's argument in \cite{Vinh11}, Lund and Saraf \cite{LS14} obtained an incidence bound for points and balanced incomplete block designs (BIBDs), Phuong, Thang, and Vinh \cite{PTV17} obtained an incidence bound for points and generalized spheres, Pham, Phuong, Sang, Valculescu, and Vinh \cite{PPSSVV18} proved lower bounds on the number of distinct distances between points and lines in the plane over finite fields. The main tool from spectral graph theory used in most of these works is the following bipartite-variant of the well-known expander mixing lemma by Alon and Chung \cite{AC88}.

\begin{lemma}\label{expander_mix_bipartite}(Expander crossing lemma, \cite{Haemers95,DSV12})
Let $G$ be a bipartite graph with parts $A,B$ such that the vertices in $A$ all have degree $a$ and the vertices in $B$ all have degree $b$. Then, for any $X\subseteq A$ and $Y\subseteq B$, the number of edges between $X$ and $Y$, denoted by $e(X,Y)$, satisfies
\begin{equation}\label{eq_expander_cross}
    \left|e(X,Y)-\frac{a}{|B|}|X||Y|\right|\leq \lambda_3\sqrt{|X||Y|\parenv{1-\frac{|X|}{|A|}}\parenv{1-\frac{|Y|}{|B|}}},
\end{equation}
where $\lambda_3$ is the third eigenvalue of $G$\footnote{We denote $|\lambda_1| \geq |\lambda_2| \geq |\lambda_3|\geq \cdots$ as the eigenvalues of the adjacency matrix of $G$, ordered by their absolute value.}.
\end{lemma}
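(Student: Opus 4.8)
The plan is to express $e(X,Y)$ as a bilinear form in the biadjacency matrix of $G$ and then peel off the ``expected'' term $\frac{a}{|B|}|X||Y|$ via an orthogonal decomposition against the all-ones vectors. Let $N$ be the $|A|\times|B|$ biadjacency matrix of $G$, so that the adjacency matrix of $G$ is $\left(\begin{smallmatrix} 0 & N \\ N^{T} & 0\end{smallmatrix}\right)$, whose nonzero eigenvalues are precisely $\pm\sigma_1,\pm\sigma_2,\dots$, where $\sigma_1\ge\sigma_2\ge\cdots$ are the singular values of $N$. Biregularity gives $N\mathbf{1}_B=a\,\mathbf{1}_A$ and $N^{T}\mathbf{1}_A=b\,\mathbf{1}_B$ (and, by double counting edges, $a|A|=b|B|$), so the normalized all-ones vectors $\mathbf{1}_A/\sqrt{|A|}$ and $\mathbf{1}_B/\sqrt{|B|}$ form a pair of leading singular vectors with $\sigma_1=\sqrt{ab}=|\lambda_1|=|\lambda_2|$. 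Hence $|\lambda_3|=\sigma_2$ equals the operator norm of $N$ restricted to the subspaces orthogonal to these two vectors.

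Next I would decompose the indicator vectors: writing $\mathbf{1}_X\in\R^{|A|}$ and $\mathbf{1}_Y\in\R^{|B|}$ for the characteristic vectors of $X$ and $Y$, set $\mathbf{1}_X=\frac{|X|}{|A|}\mathbf{1}_A+\mathbf{x}$ with $\mathbf{x}\perp\mathbf{1}_A$, and $\mathbf{1}_Y=\frac{|Y|}{|B|}\mathbf{1}_B+\mathbf{y}$ with $\mathbf{y}\perp\mathbf{1}_B$. Since $e(X,Y)=\mathbf{1}_X^{T}N\mathbf{1}_Y$, expanding the product yields four terms. The two cross terms vanish: $\mathbf{1}_A^{T}N\mathbf{y}=(N^{T}\mathbf{1}_A)^{T}\mathbf{y}=b\,\mathbf{1}_B^{T}\mathbf{y}=0$ and $\mathbf{x}^{T}N\mathbf{1}_B=a\,\mathbf{x}^{T}\mathbf{1}_A=0$. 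The principal term is $\frac{|X||Y|}{|A||B|}\mathbf{1}_A^{T}N\mathbf{1}_B=\frac{|X||Y|}{|A||B|}\cdot a|A|=\frac{a}{|B|}|X||Y|$. Thus $e(X,Y)-\frac{a}{|B|}|X||Y|=\mathbf{x}^{T}N\mathbf{y}$.

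It remains to bound $|\mathbf{x}^{T}N\mathbf{y}|$. Because $\mathbf{x}$ and $\mathbf{y}$ are orthogonal to the leading left and right singular vectors of $N$, expanding in a singular value decomposition of $N$ and applying Cauchy--Schwarz gives $|\mathbf{x}^{T}N\mathbf{y}|\le\sigma_2\,\|\mathbf{x}\|\,\|\mathbf{y}\|=|\lambda_3|\,\|\mathbf{x}\|\,\|\mathbf{y}\|$. Finally, the Pythagorean theorem gives $\|\mathbf{x}\|^{2}=\|\mathbf{1}_X\|^{2}-\frac{|X|^{2}}{|A|^{2}}\|\mathbf{1}_A\|^{2}=|X|\bigl(1-\tfrac{|X|}{|A|}\bigr)$, and similarly $\|\mathbf{y}\|^{2}=|Y|\bigl(1-\tfrac{|Y|}{|B|}\bigr)$; substituting these in gives exactly \eqref{eq_expander_cross}.

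The only slightly delicate point is the spectral bookkeeping at the outset: one must verify that the all-ones vectors are genuinely the top singular vectors of $N$ — this is where biregularity is used — so that ``orthogonal to $\mathbf{1}_A$ and $\mathbf{1}_B$'' coincides with ``orthogonal to the $\lambda_1$- and $\lambda_2$-eigenspaces,'' and consequently that the constant appearing in the error term is $|\lambda_3|$ rather than $|\lambda_1|$. Once this is pinned down, the remainder is a routine expansion together with two applications of Pythagoras.
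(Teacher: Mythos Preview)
Your proposal is correct and follows essentially the same approach as the sketch given in the paper's introduction: decompose the indicator vectors into their projections onto the all-ones vectors plus orthogonal remainders, observe that the cross terms vanish by biregularity, compute the main term as $\frac{a}{|B|}|X||Y|$, and bound the residual bilinear form by $|\lambda_3|\,\|\mathbf{x}\|\,\|\mathbf{y}\|$ together with the Pythagorean computation of the norms. The only cosmetic difference is that you work with the biadjacency matrix $N$ and its singular values, whereas the paper phrases the same argument in terms of the full adjacency matrix $\bbM$ and its eigenvalues; since the nonzero eigenvalues of $\bbM$ are exactly $\pm\sigma_i$, the two viewpoints are equivalent.
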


In this paper, we study an incidence problem between points and a certain family of varieties over finite fields, which was first considered in \cite{PPSVV16}. Similar to a previous work \cite{Zachi23} of the second author, our approach employs a singular value decomposition of the incidence matrix of the point-variety incidence graph, coupled with an analysis of the related group algebras. 

The main idea of our approach is also from spectral graph theory. However, unlike previous works, we don't directly apply Lemma \ref{expander_mix_bipartite} to the point-variety incidence graph. To explain the difference, we first briefly recall the proof of Lemma \ref{expander_mix_bipartite}. 
Let $\bbM$ be the adjacency matrix of the bipartite graph $G$. It is straightforward to verify that the number of edges between subsets $X \subseteq A$ and $Y \subseteq B$, denoted by $e(X, Y)$, satisfies
$$
e(X, Y) = \mathbf{1}_X \cdot \bbM \cdot \mathbf{1}_Y',
$$
where $\mathbf{1}_X$ and $\mathbf{1}_Y$ are the characteristic vectors of $X$ and $Y$, respectively.

We can decompose $\mathbf{1}_X$ and $\mathbf{1}_Y$ as follows:
$$
\mathbf{1}_X = \widetilde{\mathbf{1}_X} + \overline{\mathbf{1}_X}, \quad \mathbf{1}_Y = \widetilde{\mathbf{1}_Y} + \overline{\mathbf{1}_Y},
$$
where $\widetilde{\mathbf{1}_X}$ and $\widetilde{\mathbf{1}_Y}$ are the projections onto the span of the eigenspaces corresponding to the first and second eigenvalues $\lambda_1$ and $\lambda_2$, and $\overline{\mathbf{1}_X}$ and $\overline{\mathbf{1}_Y}$ are their orthogonal complements. Substituting these decompositions into the expression for $e(X, Y)$, we obtain
$$
e(X, Y) = \widetilde{\mathbf{1}_X} \cdot \bbM \cdot \widetilde{\mathbf{1}_Y}' + \overline{\mathbf{1}_X} \cdot \bbM \cdot \overline{\mathbf{1}_Y}',
$$
where the cross terms vanish due to the orthogonality of the eigenspaces. The first term evaluates to $\frac{a}{|B|} |X| |Y|$, and the core of the proof lies in bounding the absolute value of the second term. A common approach is to assume that both $\overline{\mathbf{1}_X}$ and $\overline{\mathbf{1}_Y}$ lie entirely within the eigenspace associated with the third eigenvalue, from which the bound in Lemma \ref{expander_mix_bipartite} follows.

Our proof follows the same overall strategy but refines the bound on the second term using the following key observation: When the bipartite graph $G$ is highly unbalanced, such as when $|A|$ is significantly larger than $|B|$, the multiplicity of the zero eigenvalue of $M$ becomes very large. In other words, the zero eigenspace has a high dimension, spanning nearly the entire space. As a result, $\overline{\mathbf{1}_X}$ and $\overline{\mathbf{1}_Y}$ are highly likely to have substantial projections onto the zero eigenspace. Since their components in the zero eigenspace do not contribute to the second term, we obtain a sharper estimate, leading to an improved bound on $e(X,Y)$.

As we will introduce in detail later in Section \ref{subsec: problem setting}, the point-variety incidence graph considered in this work is indeed highly unbalanced: the number of varieties is significantly larger than the number of points. Therefore, by precisely characterizing the zero eigenspace, we confirm that  $\overline{\mathbf{1}_X}$ and $\overline{\mathbf{1}_Y}$ have a large projection onto the zero eigenspace, which facilitates the improved bound.

\subsection{Problem settings and main results}\label{subsec: problem setting}

Let $S$ be a set of polynomials in $\Fq[x_1,\ldots,x_n]$. The variety determined by $S$ is defined as 
$$V(S)\triangleq\{\bbp\in \Fq^{n}:~f(\bbp)=0~\mathrm{for~all}~f\in S\}.$$

Before introducing the formal definition of the general varieties considered in this paper, we first provide motivation by examining two fundamental examples. These examples serve as special cases of the broader class of varieties discussed later and play a crucial role in deriving bounds on point-flat incidences (see Theorem \ref{main_thm2}) and the pinned-distance problem (see Corollary \ref{pinned_distance_improved}).

\begin{example}\label{ex1}
Let $ \bba_i = (a_{i,1},\ldots,a_{i,n+1}) \in \mathbb{F}_q^{n+1} $ for $ 1\leq i\leq d $ be $ d $ vectors, and consider the following set of $ d $ linear polynomials in $ \mathbb{F}_q[x_1, \ldots, x_{n+d}] $:
\begin{align}
    S = \left\{\sum_{j=1}^{n} a_{i,j} x_j + a_{i,n+1} - x_{n+i} \right\}_{1\leq i\leq d}. \label{eq_flat_0}
\end{align}
It is straightforward to verify that the variety defined by these polynomials corresponds to an $ n $-flat in $ \mathbb{F}_q^{n+d} $. We denote this variety as $ F_{\bba_1,\ldots,\bba_n} $. 

For instance, when $ d=1 $, the polynomial set simplifies to:
\begin{align*}
    S = \left\{\sum_{j=1}^{n} a_{1,j} x_j + a_{1,n+1} - x_{n+1} \right\},
\end{align*}
and the variety $ F_{\bba_1} = V(S) $ represents a hyperplane in $ \mathbb{F}_q^{n+1} $. In this case, the family of hyperplanes $ \{F_{\bba_1} : \bba_1 \in \mathbb{F}_q^{n+1}\} $ consists of all hyperplanes in $ \mathbb{F}_q^{n+1} $ except those with normal vector $ (0,\ldots,0,1) $.
\end{example}

\begin{example}\label{ex2}
Given a point $ \bbp = (p_1,\ldots,p_n) \in \mathbb{F}_q^n $, consider the variety defined by the following quadratic polynomial in $ \mathbb{F}_q[x_1, \ldots, x_{n+1}] $:
\begin{align}
    \sum_{i=1}^n (x_i - p_i)^2 - x_{n+1}. \label{eq_poly_p}
\end{align}
This variety represents an $ n $-dimensional elliptical paraboloid in $ \mathbb{F}_q^{n+1} $ with its vertex at $ (\bbp,0) $.
\end{example}

Now, we present the formal definition of the general variety we consider in this work. Let $h_1,\ldots,h_d\in \Fq[x_1,\ldots,x_n]$ be fixed polynomials of total degree at most $q-1$. Let $\bbb_{i}=(b_{i,1},\ldots,b_{i,n})$, $1\leq i\leq d$, be $d$ fixed vectors in $(\mathbb{Z}^{+})^n$ such that $\mathrm{gcd}(b_{i,j},q-1)=1$ for all $1\leq j\leq n$. Note that for a positive integer $b$, when $\mathrm{gcd}(b, q-1) = 1$, the function $x \mapsto x^b$ is a permutation of the field. For any $d$-tuple $(\bba_1,\ldots,\bba_{d})$ with $\bba_{i}=(a_{i,1},\ldots,a_{i,n+1})\in \Fq^{n+1}$, 
we define polynomials $f_{\bba_{1}},\ldots,f_{\bba_{d}}\in \Fq[x_1,\ldots,x_n]$ as
\begin{equation}\label{eq_variety_0}
f_{\bba_{i}}(x_1,\ldots,x_n)\triangleq \sum_{j=1}^{n}a_{i,j}x_{j}^{b_{i,j}}+a_{i,n+1}.
\end{equation}
Then, the variety $V_{\bba_1,\ldots,\bba_{d}}\subseteq \Fq^{n+d}$ defined by $(\bba_1,\ldots,\bba_d)$ is given by:
\begin{align}
    V_{\bba_1,\ldots,\bba_{d}}&\triangleq V(x_{n+1}-(h_1+f_{\bba_1})(x_1,\ldots,x_n),\ldots,x_{n+d}-(h_d+f_{\bba_d})(x_1,\ldots,x_n)).\label{eq_variety_1}
\end{align}
Note that the variety $V_{\bba_1,\ldots,\bba_{d}}$ also depends on the vectors $\bbb_{1},\ldots,\bbb_d$. However, since these vectors are fixed, we omit them from the notation of the variety for simplicity.

Notice  that the above two examples are special cases of the variety defined in \eqref{eq_variety_1}. Indeed, when we take $ \bbb_i = (1,\ldots,1) $ and $ h_i(\bbx) \equiv 0 $ for all $ 1\leq i\leq d $ in (\ref{eq_variety_0}) and (\ref{eq_variety_1}), the variety $ V_{\bba_1,\ldots,\bba_{d}} $ becomes  
\begin{align}
    \left\{(x_1,\ldots,x_{n+d})\in \mathbb{F}_q^{n+d} : x_{n+i}=\sum_{j=1}^{n}a_{i,j}x_{j}+a_{i,n+1},\ \forall~1\leq i\leq d\right\}, \label{eq_flat_1}
\end{align}
which is the $ n $-flat $ F_{\bba_1,\ldots,\bba_d} $ introduced in Example \ref{ex1}. Similarly, for $d=1$, $h(\bbx)=\sum_{i=1}^{n}x_i^2$, $\bbb_1=(1,\ldots,1)$
and $\bba_1=(-2p_1,\ldots,-2p_n,\sum_{i=1}^{n}p_i^2)$  for some fixed $\bbp=(p_1,\ldots,p_n)\in \Fq^n$, the variety $V_{\bba_1}$ reduces to  
\begin{align}
    \left\{(x_1,\ldots,x_{n+1})\in \mathbb{F}_q^{n+1} : x_{n+1}=\sum_{i=1}^{n}x_i^2-\sum_{i=1}^{n}2p_i x_i+\sum_{i=1}^{n}p_i^2\right\}, \label{eq_variety defined by P}
\end{align}
which is the $ n $-dimensional elliptical paraboloid from Example \ref{ex2}.

Given a set $\cP$ of points and a set $\cV$ of varieties, we define the number of incidences $I(\cP,\cV)$ between $\cP$ and $\cV$ as the cardinality of the set $\{(\bbp,V)\in \cP\times \cV:~\bbp\in V\}$. Our main result is as follows.

\begin{restatable}{theorem}{mainthmone}\label{main_thm1}
Let $\cP$ be a set of points in $\Fq^{n+d}$ and $\cV$ be a set of varieties of the form $V_{\bba_{1},\ldots,\bba_{d}}$ defined in (\ref{eq_variety_1}). Then, the number of incidences between $\cP$ and $\cV$ satisfies
$$\abs{I(\cP,\cV)-\frac{|\cP||\cV|}{q^d}}\leq q^{n/2}\sqrt{|\cP||\cV|}\parenv{1+\frac{|\cV|}{q}}^{1/2},$$ 
when $d\geq 2$ and 
$$\abs{I(\cP,\cV)-\frac{|\cP||\cV|}{q}}\leq q^{n/2}(1-\frac{1}{q})\sqrt{|\cP||\cV|},$$ 
when $d= 1$.
\end{restatable}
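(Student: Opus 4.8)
\medskip

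The plan is to set up the point-variety incidence graph as a bipartite graph, write the incidence count as a bilinear form in the adjacency matrix, and then carry out a spectral analysis that is sharper than the generic expander mixing lemma by exploiting the large zero eigenspace. Concretely, let $A = \Fq^{n+d}$ be the set of all points and let $B$ be the set of all varieties of the form $V_{\bba_1,\ldots,\bba_d}$; since each such variety is determined by the $d$-tuple $(\bba_1,\ldots,\bba_d)$ with $\bba_i \in \Fq^{n+1}$, we have $|B| = q^{d(n+1)}$. Let $\bbM$ be the $|A|\times|B|$ incidence matrix, with $\bbM_{\bbp, (\bba_1,\ldots,\bba_d)} = 1$ iff $\bbp \in V_{\bba_1,\ldots,\bba_d}$. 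The key structural fact to establish first is that this graph is biregular: each variety $V_{\bba_1,\ldots,\bba_d}$ contains exactly $q^n$ points (it is a graph of $d$ functions $\Fq^n \to \Fq$, so projecting to the first $n$ coordinates is a bijection onto $\Fq^n$), and each point lies on exactly $|B|/q^d = q^{d(n+1)-d} = q^{dn}$ varieties. The latter is the heart of the combinatorial setup: fixing $\bbp = (p_1,\ldots,p_{n+d})$, the condition $\bbp \in V_{\bba_1,\ldots,\bba_d}$ decouples into $d$ independent affine conditions, one per index $i$, namely $p_{n+i} = h_i(p_1,\ldots,p_n) + \sum_j a_{i,j} p_j^{b_{i,j}} + a_{i,n+1}$, and each such affine equation in the $n+1$ unknowns $a_{i,1},\ldots,a_{i,n+1}$ has exactly $q^n$ solutions. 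So $e(A,B)$-type counts have the clean main term $\frac{a}{|B|}|X||Y| = \frac{q^n}{q^{d(n+1)}}|X||Y| = \frac{|X||Y|}{q^d \cdot q^{dn}}\cdot q^{dn}$... more precisely the "expected" count $\frac{|\cP||\cV|}{q^d}$ drops out because the density of incident pairs is $q^n/|B| = 1/(q^d q^{dn})$ times... I will just record it as $a/|B| = q^n/q^{d(n+1)} = q^{-d}\cdot q^{-dn}\cdot q^{dn}$, i.e. the fraction of $B$ incident to a fixed point is $1/q^d$.

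\medskip

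The spectral step is where the improvement over Lemma~\ref{expander_mix_bipartite} happens, and I expect it to be the main technical obstacle. Rather than bound the "error" eigenvalue $\lambda_3$ and apply the generic lemma, I would diagonalize $\bbM \bbM^\top$ (an $|A|\times|A|$ matrix) explicitly using the additive characters of $\Fq^{n}$ together with the permutation property $\gcd(b_{i,j}, q-1)=1$. Writing $\bbM\bbM^\top_{\bbp,\bbp'} = \#\{(\bba_1,\ldots,\bba_d) : \bbp,\bbp' \in V_{\bba_1,\ldots,\bba_d}\}$ and expanding via characters, the role of the substitutions $x_j \mapsto x_j^{b_{i,j}}$ is exactly to make the relevant exponential sums collapse to the same Gauss/Kloosterman-free form as the plain linear case — this is why the $h_i$'s and the $b_{i,j}$'s can be absorbed. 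I expect the nonzero eigenvalues of $\bbM\bbM^\top$ to be just two values: the top one $q^n \cdot q^{dn} = q^{(d+1)n}$ (Perron, from the all-ones direction), with multiplicity $1$, and a second value on the order of $q^{dn}$ coming from a low-dimensional "linear-forms" eigenspace of dimension roughly $q$ (corresponding to the $a_{i,n+1}$-shift degrees of freedom, cf. the $d=1$ hyperplane case where the family misses exactly the hyperplanes with normal $(0,\ldots,0,1)$), with everything else in the kernel. The precise computation of these eigenvalues and, crucially, the dimension of the kernel, is the step I'd budget the most care for; the $(1 + |\cV|/q)^{1/2}$ and $(1-1/q)$ factors in the two cases of the theorem are telling me the second eigenspace behaves slightly differently for $d=1$ versus $d\ge 2$.

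\medskip

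With the spectral decomposition in hand, the rest follows the template recalled in the introduction. Write $\mathbf{1}_{\cP}$ and $\mathbf{1}_{\cV}$ for the characteristic vectors, decompose each into (top eigenvector) $+$ (second-eigenspace part) $+$ (kernel part), note the kernel part contributes nothing to $\mathbf{1}_{\cP}^\top \bbM \mathbf{1}_{\cV}$, and bound the surviving middle term by Cauchy–Schwarz: $|\mathbf{1}_{\cP}^\top \bbM \mathbf{1}_{\cV} - \text{(main term)}| \le \sigma_2 \, \|\widetilde{\mathbf{1}_{\cP}}\| \, \|\widetilde{\mathbf{1}_{\cV}}\|$, where $\sigma_2 = \sqrt{\lambda_2(\bbM\bbM^\top)}$ and the tildes denote projections onto the second singular space. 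The point is that $\|\widetilde{\mathbf{1}_{\cP}}\|^2 \le \|\mathbf{1}_{\cP}\|^2 - (\text{top component})^2 = |\cP|(1 - |\cP|/|A|)$ and similarly $\|\widetilde{\mathbf{1}_{\cV}}\|^2$ is controlled — but here we get the extra savings, because $\widetilde{\mathbf{1}_{\cV}}$ lives in a space of dimension only about $q$, so its norm is much smaller than the crude bound $|\cV|$, which is exactly what produces the factor $(1+|\cV|/q)^{1/2}$ rather than $|\cV|^{1/2}$. Plugging $\sigma_2 = q^{dn/2}$ (order of magnitude) times the point-projection bound $\sqrt{|\cP|}$ times the variety-projection bound gives the claimed estimate; I would then treat $d=1$ separately to get the cleaner $(1-1/q)$ constant, where the second eigenspace is simpler and the projection of $\mathbf{1}_{\cV}$ can be computed exactly.
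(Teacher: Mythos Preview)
Your outline matches the paper's approach---SVD the incidence matrix, diagonalize via additive characters, exploit the kernel---but the spectral picture you describe is wrong in a way that blocks the proof, and the decisive estimate is missing.

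The second eigenspace does \emph{not} have dimension ``roughly $q$''. For $\bbM\bbM^\top$ (the $q^{n+d}\times q^{n+d}$ points matrix, corresponding to the paper's $\bbT^*\bbT$), the eigenvalue $q^{dn}$ has multiplicity $q^{n+d}-q^n$ and the zero eigenspace has dimension only $q^n-1$; so the points side gives essentially no savings, and $\|\widetilde{\mathbf{1}_\cP}\|^2\le(1-q^{-d})|\cP|$ is nearly the trivial Parseval bound. All the gain lives on the variety side, i.e.\ in $\bbM^\top\bbM$ (the paper's $\bbT\bbT^*$), a $q^{d(n+1)}\times q^{d(n+1)}$ matrix which you do not propose to analyze. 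There the second eigenspace again has dimension $q^{n+d}-q^n$, but now inside a much larger ambient space, so the kernel is indeed huge when $d\ge 2$. Even so, the bound on $\|\widetilde{\mathbf{1}_\cV}\|^2$ does not follow from any dimension count: Parseval gives only $\|\widetilde{\mathbf{1}_\cV}\|^2\le |\cV|$, which recovers merely Theorem~\ref{Phuong's_thm}, and ``dimension times maximal Fourier coefficient'' gives a bound of order $|\cV|^2/q^{(d-1)n}$, worse for small $|\cV|$. The paper instead identifies the second eigenspace of $\bbT\bbT^*$ explicitly as the span of the characters $\chi_{\bba_1,\ldots,\bba_d}$ with $\bba_i=c_i(\bby,1)$ for a common $\bby\in\Fq^n$ and $(c_1,\ldots,c_d)\in\Fq^d\setminus\{\mathbf 0\}$, and then evaluates $\sum|\langle\mathbf{1}_\cV,\chi\rangle|^2$ as a double sum over pairs in $\cV$, with the inner character sum over $(\bby,\bbc)$ controlled via a rank analysis of the difference matrix. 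This yields $\|\widetilde{\mathbf{1}_\cV}\|^2\le \frac{|\cV|}{q^{(d-1)n}}\bigl(1-\tfrac1q+\tfrac{|\cV|(q^{d-1}-1)}{q^d}\bigr)$ (Lemma~\ref{lem_proj_on_U}); combined with $\sigma_2=q^{dn/2}$ and the points-side bound this produces exactly the factor $(1+|\cV|/q)^{1/2}$ for $d\ge2$, and the vanishing of the off-diagonal term when $d=1$ gives the clean $(1-\tfrac1q)$. This lemma is the entire content of the improvement over Phuong et al., and your proposal has no mechanism for it.
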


Let $G$ be the bipartite incidence graph defined by all points in $\mathbb{F}_q^{n+d}$ and all varieties of the form $V{\bba_{1},\ldots,\bba_{d}}$. The incidence problem between points and varieties of form $V{\bba_{1},\ldots,\bba_{d}}$ was first studied by Phuong, Pham, Sang, Valculescu, and Vinh in \cite{PPSVV16}, where they established the following incidence bound by applying Lemma \ref{expander_mix_bipartite} to $G$.

\begin{theorem}\cite[Theorem 1.3]{PPSVV16}\label{Phuong's_thm}
Let $\cP$ be a set of points in $\Fq^{n+d}$ and $\cV$ be a set of varieties of the form $V_{\bba_{1},\ldots,\bba_{d}}$ defined in (\ref{eq_variety_1}). Then, the number of incidences between $\cP$ and $\cV$ satisfies
$$\abs{I(\cP,\cV)-\frac{|\cP||\cV|}{q^d}}\leq q^{dn/2}\sqrt{|\cP||\cV|}.$$ 
\end{theorem}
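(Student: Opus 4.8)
We sketch how this bound is obtained in \cite{PPSVV16}, and how one would reconstruct it. The plan is to apply the expander crossing lemma (Lemma~\ref{expander_mix_bipartite}) to the \emph{full} point--variety incidence graph. Let $G$ be the bipartite graph whose left part $A$ is the set of all $q^{n+d}$ points of $\Fq^{n+d}$, whose right part $B$ is the set of all varieties of the form~\eqref{eq_variety_1}, and where a point is joined to every variety passing through it. Running Lemma~\ref{expander_mix_bipartite} with $X=\cP$ and $Y=\cV$ will yield the claim once we (i)~check that $G$ is biregular and record $|A|,|B|$ and the two degrees, and (ii)~compute the eigenvalue $\lambda_3$ of $G$; the rest is substitution into~\eqref{eq_expander_cross} together with the trivial bound $\parenv{1-|X|/|A|}\parenv{1-|Y|/|B|}\le 1$.

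For~(i), parametrize a variety by $(\alpha_1,\ldots,\alpha_d;\beta_1,\ldots,\beta_d)$ with $\alpha_i=(a_{i,1},\ldots,a_{i,n})\in\Fq^{n}$ and $\beta_i=a_{i,n+1}\in\Fq$, and put $\psi_i(\bbp)=(p_1^{b_{i,1}},\ldots,p_n^{b_{i,n}})$. Since $\gcd(b_{i,j},q-1)=1$, every map $x\mapsto x^{b_{i,j}}$ permutes $\Fq$, so each $\psi_i$ is a bijection of $\Fq^{n}$; this forces the assignment $(\alpha_\bullet;\beta_\bullet)\mapsto V_{\bba_\bullet}$ to be injective, so $|B|=q^{d(n+1)}$ and $|A|=q^{n+d}$. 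The condition $(\bbp,\bbz)\in V_{\bba_\bullet}$ is $\beta_i=z_i-h_i(\bbp)-\langle\alpha_i,\psi_i(\bbp)\rangle$ for all $i$: reading it with $(\alpha_\bullet;\beta_\bullet)$ fixed shows the variety is the graph of a map $\Fq^{n}\to\Fq^{d}$, hence has $q^{n}$ points, and reading it with $(\bbp,\bbz)$ fixed shows each $\beta_i$ is determined by $\alpha_i$, hence $q^{nd}$ varieties pass through each point. So $G$ is biregular, with $B$-degree $q^{n}$ and $A$-degree $q^{nd}$.

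Step~(ii) is the substantive one. The nonzero eigenvalues of $G$ are $\pm\sigma$ over the singular values $\sigma$ of the biadjacency matrix $\bbM\in\{0,1\}^{A\times B}$, so it suffices to diagonalize $\bbM\bbM^{\mathsf{T}}$, whose entry at $\parenv{(\bbp,\bbz),(\bbp',\bbz')}$ counts the varieties through both points. Using the incidence description above, and crucially the bijectivity of each $\psi_i$: if $\bbp=\bbp'$ the entry is $q^{nd}$ when $\bbz=\bbz'$ and $0$ otherwise; if $\bbp\ne\bbp'$, then for each $i$ the condition on $\alpha_i$ is a single linear equation $\langle\alpha_i,\psi_i(\bbp)-\psi_i(\bbp')\rangle=\text{const}$ whose coefficient vector is nonzero (this is exactly where $\gcd(b_{i,j},q-1)=1$ is used), hence has $q^{n-1}$ solutions, after which $\beta_i$ is forced, and the entry equals $q^{(n-1)d}$. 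Therefore
\[
\bbM\bbM^{\mathsf{T}}=q^{(n-1)d}\,J+I_{q^{n}}\otimes\parenv{q^{nd}I_{q^{d}}-q^{(n-1)d}J_{q^{d}}},
\]
with $J$ the all-ones matrix on $A$; this is diagonalized by the additive characters of $\Fq^{n+d}$ and has eigenvalues $q^{n(d+1)}$ (multiplicity $1$), $q^{nd}$ (multiplicity $q^{n}(q^{d}-1)$) and $0$ (multiplicity $q^{n}-1$). Hence $\bbM$ has singular values $q^{n(d+1)/2}$, $q^{nd/2}$, $0$, so $|\lambda_1|=|\lambda_2|=q^{n(d+1)/2}$ and $\lambda_3=q^{nd/2}=q^{dn/2}$.

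Substituting $|A|=q^{n+d}$, $|B|=q^{d(n+1)}$, $a=q^{nd}$ (the $A$-degree), and $\lambda_3=q^{dn/2}$ into~\eqref{eq_expander_cross} with $X=\cP$, $Y=\cV$ yields the main term $\tfrac{a}{|B|}|X||Y|=\tfrac{|\cP||\cV|}{q^{d}}$ and the error bound $q^{dn/2}\sqrt{|\cP||\cV|}$. The only delicate point is step~(ii), and within it the use of $\gcd(b_{i,j},q-1)=1$ to guarantee $\psi_i(\bbp)\ne\psi_i(\bbp')$ for $\bbp\ne\bbp'$: without it the ``distinct base point'' overlap count could be as large as $q^{n}$, collapsing the spectral gap. (Refining this argument by keeping the factor $\parenv{1-|X|/|A|}\parenv{1-|Y|/|B|}$ and, more importantly, exploiting that the $0$-eigenspace of $\bbM\bbM^{\mathsf{T}}$ is very large when $|B|\gg|A|$, is exactly what produces the sharper Theorem~\ref{main_thm1}.)
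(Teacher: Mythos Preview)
Your argument is correct. The biregularity check and the computation of the entries and spectrum of $\bbM\bbM^{\mathsf T}$ are accurate (they coincide with Lemma~\ref{lem_entry_T^*T} and Theorem~\ref{thm_eigenvalue_T^*T} of the paper), and substituting $\lambda_3=q^{dn/2}$ into Lemma~\ref{expander_mix_bipartite} gives exactly the stated bound.

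Note, however, that the paper does not give its own proof of Theorem~\ref{Phuong's_thm}; it is quoted from \cite{PPSVV16}. According to Remark~\ref{rmk_comparsion}, the original argument in \cite{PPSVV16} obtains $\lambda_3$ not by exact diagonalization but by \emph{bounding} it through a count of paths of length~$3$ in $G$ (i.e., via a trace-type estimate), and then invokes Lemma~\ref{expander_mix_bipartite}. Your step~(ii) is different: you compute the full spectrum of $\bbM\bbM^{\mathsf T}$ exactly, using the explicit block/tensor decomposition and characters. That is precisely the computation the present paper carries out in Section~\ref{sec: right singular vectors} on its way to the sharper Theorem~\ref{main_thm1}; you have simply stopped short of the projection refinement (Lemmas~\ref{lem_proj_on_V} and~\ref{lem_proj_on_U}) and plugged the exact $\lambda_3$ back into the expander crossing lemma. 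So your route is sound and is in fact closer to this paper's own methods than to the original \cite{PPSVV16} proof; the path-counting approach buys a more elementary argument that avoids full diagonalization, while yours gives the exact spectrum and makes the subsequent refinement to Theorem~\ref{main_thm1} immediate.
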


\begin{remark}\label{rmk1}
When $d = 1$, Theorem \ref{main_thm1} provides a factor of $(1-\frac{1}{q})$ improvement on the bound of $\abs{I(\cP,\cV) - \frac{|\cP||\cV|}{q^d}}$ compared to Theorem \ref{Phuong's_thm} in \cite{PPSVV16}. For $d \geq 2$, note that 
\[
q^{n/2}\parenv{1 + \frac{|\cV|}{q}}^{1/2} = q^{dn/2}\parenv{\frac{q + |\cV|}{q^{(d-1)n + 1}}}^{1/2}
\]
and $\frac{q + |\cV|}{q^{(d-1)n + 1}} < 1$ when $|\cV| < q^{(d-1)n + 1} - q$. Thus, compared to Theorem \ref{Phuong's_thm}, Theorem \ref{main_thm1} provides a factor of $q^{-\frac{(d-1)n}{2}}\parenv{1 + \frac{|\cV|}{q}}^{1/2}$ improvement on the bound of $\abs{I(\cP,\cV) - \frac{|\cP||\cV|}{q^d}}$ when $|\cV| < q^{(d-1)n + 1} - q$.
\end{remark}

\begin{remark}\label{rmk_comparsion}
Note that in order to prove Theorem \ref{Phuong's_thm}, the authors in \cite{PPSVV16} bound the third eigenvalue of the point-variety incidence matrix by estimating the number of paths of length $3$ in the graph and then applying Lemma \ref{expander_mix_bipartite}.

In contrast, to achieve our improved bound, we fully characterize \emph{all} the eigenvalues, along with their eigenspaces of the point-variety incidence matrix. This comprehensive characterization allows us to derive a sharper bound.
\end{remark}

\subsection{Application to point-flat incidences problem}
In \cite{LS14}, Lund and Saraf proved an incidence bound for points and blocks in a balanced incomplete block design (BIBD); see Definition 1.1 in \cite{Colbourn2010crc} for details. Specifically, in the same spirit as Vinh's work \cite{Vinh11}, they first determined the third eigenvalue of the adjacency matrix of the incidence graph defined by points and BIBDs, and then used Lemma \ref{expander_mix_bipartite} to obtain the incidence bound. As an application of their bound, they derived the following incidence bound for points and $n$-flats (i.e., $n$-dimensional affine subspaces) in $\mathbb{F}_q^{n+d}$, which was first proved by Haemers \cite[Chapter 3]{Haemers80}; see also \cite[Theorem 2.3]{BIP14_arxiv}.

\begin{theorem}\cite[Corollary 2]{LS14}\label{Lund's_thm}
Let $\cP$ be a set of points in $\Fq^{n+d}$ and $\cF$ be a set of $n$-flats in $\Fq^{n+d}$. Then, the number of incidences between $\cP$ and $\cF$ satisfies 
$$\abs{I(\cP,\cF)-\frac{|\cP||\cF|}{q^d}}\leq (1+o_{q}(1))q^{dn/2}\sqrt{|\cP||\cF|}.$$ 
\end{theorem}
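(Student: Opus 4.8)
The plan is to obtain Theorem~\ref{Lund's_thm} directly from Lemma~\ref{expander_mix_bipartite}, applied to the full point--flat incidence graph $G$ on all $q^{n+d}$ points and all $n$-flats of $\Fq^{n+d}$, after observing that this graph is the incidence graph of a $2$-design and computing its spectrum exactly. So the first step is the design bookkeeping: set $v=q^{n+d}$ (points), take the blocks to be all $n$-flats, each of size $k=q^{n}$, and let $r=\qbinom{n+d}{n}$ be the number of $n$-flats through a fixed point (these are exactly $P+\vec F$ for $\vec F$ an $n$-dimensional subspace of $\Fq^{n+d}$). The key observation is that any two distinct points $P,Q$ lie on exactly $\lambda:=\qbinom{n+d-1}{n-1}$ common $n$-flats: an $n$-flat through both is one containing the line $PQ$, i.e.\ of the form $P+\vec F$ with $Q-P\in\vec F$, and such $\vec F$ correspond bijectively to the $(n-1)$-dimensional subspaces of $\Fq^{n+d}/\langle Q-P\rangle\cong\Fq^{n+d-1}$, a count independent of the pair. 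Double counting incident pairs then gives the total number of blocks $b=vr/k=q^{d}\qbinom{n+d}{n}$; in particular $b>v$.

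Next I would read off the eigenvalues. Writing $N$ for the $v\times b$ point-by-flat incidence matrix, the $2$-design identity $NN^{\top}=(r-\lambda)I+\lambda J$ shows $NN^{\top}$ has eigenvalue $rk$ (once, on the all-ones vector, using $r(k-1)=\lambda(v-1)$) and $r-\lambda$ with multiplicity $v-1$. Hence $G$ has adjacency eigenvalues $\pm\sqrt{rk}$, $\pm\sqrt{r-\lambda}$, and $0$ with multiplicity $b-v$; since $rk=\qbinom{n+d}{n}q^{n}\asymp q^{dn+n}$ is far larger than $r-\lambda$, the third-largest eigenvalue in absolute value is $\lambda_3=\sqrt{r-\lambda}$. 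A short Gaussian-binomial estimate, using $\qbinom{n+d}{n}=\qbinom{n+d-1}{n-1}\cdot\frac{q^{n+d}-1}{q^{n}-1}$, gives $r-\lambda=\qbinom{n+d-1}{n-1}\cdot\frac{q^{n}(q^{d}-1)}{q^{n}-1}=(1+o_q(1))\,q^{d(n-1)}\cdot q^{d}=(1+o_q(1))\,q^{dn}$, so $\lambda_3=(1+o_q(1))\,q^{dn/2}$.

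Finally I would apply Lemma~\ref{expander_mix_bipartite} with $A$ the point side (common degree $a=r$), $B$ the flat side ($|B|=b$, common degree $k$), $X=\cP$ and $Y=\cF$. Since $e(\cP,\cF)=I(\cP,\cF)$ and $a/|B|=\qbinom{n+d}{n}\big/\bigl(q^{d}\qbinom{n+d}{n}\bigr)=q^{-d}$, bounding the two parenthetical factors in \eqref{eq_expander_cross} by $1$ yields $\bigl|I(\cP,\cF)-|\cP||\cF|/q^{d}\bigr|\le\lambda_3\sqrt{|\cP||\cF|}=(1+o_q(1))\,q^{dn/2}\sqrt{|\cP||\cF|}$, which is exactly the claim.

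The only real obstacle is the bookkeeping, not any single hard idea: one must check that the pair-count $\lambda$ is genuinely independent of the chosen pair (so that $NN^{\top}$ has the stated rank-$1$-plus-scalar form), verify that $\sqrt{r-\lambda}$ really occupies third place in the ordering of $|\lambda_i|$, and carry out the asymptotic estimate of $r-\lambda$. As alternatives, one could instead observe that the points and $n$-flats form a balanced incomplete block design and quote the general BIBD incidence bound of Lund--Saraf; or, in the restricted range where $|\cF|$ is small, partition $\cF$ according to which coordinate $n$-subset each flat is a graph over and invoke Theorem~\ref{main_thm1} class by class --- but that last route loses a factor $\sqrt{\qbinom{n+d}{n}}$ and does not cover all $|\cF|$, which is why the direct spectral argument above is preferable.
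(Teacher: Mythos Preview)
Your proposal is correct, and it is precisely the argument the paper attributes to Lund--Saraf (and Haemers) in the paragraph introducing Theorem~\ref{Lund's_thm}: the paper does not give its own proof of this cited result, but describes it as ``they first determined the third eigenvalue of the adjacency matrix of the incidence graph defined by points and BIBDs, and then used Lemma~\ref{expander_mix_bipartite},'' which is exactly your computation of $\lambda_3=\sqrt{r-\lambda}=(1+o_q(1))q^{dn/2}$ from the $2$-design identity $NN^{\top}=(r-\lambda)I+\lambda J$ followed by the expander crossing lemma. Your bookkeeping (the values of $r$, $\lambda$, $b$, the asymptotic $r-\lambda\sim q^{dn}$, and $a/|B|=q^{-d}$) is all correct.
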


As a consequence of Theorem \ref{main_thm1}, we have the following bounds on incidences between points and $n$-flats of the form $F_{\bba_1,\ldots,\bba_{d}}$ defined in Example \ref{ex1}.

\begin{corollary}\label{coro_incidence_pts_flats}
Let $\cP$ be a set of points in $\Fq^{n+d}$ and $\cF$ be a set of $n$-flats of the form $F_{\bba_{1},\ldots,\bba_{d}}$ defined in (\ref{eq_flat_1}). Then, the number of incidences between $\cP$ and $\cF$ satisfies
$$\abs{I(\cP,\cF)-\frac{|\cP||\cF|}{q^d}}\leq q^{n/2}\sqrt{|\cP||\cF|}\parenv{1+\frac{|\cF|}{q}}^{1/2}$$ 
when $d\geq 2$ and 
$$\abs{I(\cP,\cV)-\frac{|\cP||\cF|}{q}}\leq q^{n/2}(1-\frac{1}{q})\sqrt{|\cP||\cF|}$$ 
when $d= 1$.
\end{corollary}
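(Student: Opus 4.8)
The plan is to derive Corollary \ref{coro_incidence_pts_flats} as a direct specialization of Theorem \ref{main_thm1}. The key point is that the $n$-flats $F_{\bba_1,\ldots,\bba_d}$ defined in Example \ref{ex1}, and described explicitly in \eqref{eq_flat_1}, are precisely the varieties $V_{\bba_1,\ldots,\bba_d}$ of \eqref{eq_variety_1} obtained by the specific choice of parameters $\bbb_i=(1,\ldots,1)$ for all $1\leq i\leq d$ and $h_i(\bbx)\equiv 0$ for all $1\leq i\leq d$. This identification is already spelled out in the text preceding the statement: with these choices $f_{\bba_i}(x_1,\ldots,x_n)=\sum_{j=1}^n a_{i,j}x_j+a_{i,n+1}$, so the defining equations $x_{n+i}-(h_i+f_{\bba_i})(x_1,\ldots,x_n)$ become exactly $x_{n+i}-\sum_{j=1}^n a_{i,j}x_j-a_{i,n+1}$, whose common zero set is \eqref{eq_flat_1}.

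First I would verify that the hypotheses of Theorem \ref{main_thm1} are met by this parameter choice: the polynomials $h_i\equiv 0$ trivially have total degree at most $q-1$, and the vectors $\bbb_i=(1,\ldots,1)\in(\mathbb{Z}^+)^n$ satisfy $\gcd(b_{i,j},q-1)=\gcd(1,q-1)=1$ for every $1\leq j\leq n$, as required (indeed $x\mapsto x^1$ is trivially a permutation of $\Fq$). Hence every set $\cF$ of $n$-flats of the form $F_{\bba_1,\ldots,\bba_d}$ is, verbatim, a set $\cV$ of varieties of the form $V_{\bba_1,\ldots,\bba_d}$, and $I(\cP,\cF)=I(\cP,\cV)$.

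Then I would simply invoke Theorem \ref{main_thm1} with this $\cV$. For $d\geq 2$ it yields $\bigl|I(\cP,\cF)-|\cP||\cF|/q^d\bigr|\leq q^{n/2}\sqrt{|\cP||\cF|}\,(1+|\cF|/q)^{1/2}$, and for $d=1$ it yields $\bigl|I(\cP,\cF)-|\cP||\cF|/q\bigr|\leq q^{n/2}(1-\tfrac1q)\sqrt{|\cP||\cF|}$, which are exactly the two asserted bounds (with $|\cV|$ replaced throughout by $|\cF|$). There is essentially no obstacle here: the only thing to be careful about is the bookkeeping that the class of flats in \eqref{eq_flat_1} really is a \emph{subfamily} of the varieties \eqref{eq_variety_1} for the fixed choice of $\bbb_i$ and $h_i$ — in particular these are not all $n$-flats of $\Fq^{n+d}$ but only those expressible as graphs of affine maps in the last $d$ coordinates (as noted for hyperplanes at the end of Example \ref{ex1}) — so the corollary is genuinely a statement about that restricted family and follows with no further work.
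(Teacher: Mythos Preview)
Your proposal is correct and takes essentially the same approach as the paper: the corollary is stated there simply as an immediate consequence of Theorem~\ref{main_thm1}, obtained by specializing to $\bbb_i=(1,\ldots,1)$ and $h_i\equiv 0$ so that $V_{\bba_1,\ldots,\bba_d}=F_{\bba_1,\ldots,\bba_d}$, and your write-up just makes this identification and the verification of hypotheses explicit.
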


For positive integers $n\geq k\geq 1$, let $\qbinom{n}{k}_q=\frac{\prod_{i=0}^{k-1}q^{n-i}-1}{\prod_{i=0}^{k-1}q^{k-i}-1}$ be the standard Gaussian binomial coefficient. As an application of Corollary \ref{coro_incidence_pts_flats}, we obtain the following incidence bound for points and $n$-flats in $\Fq^{n+d}$, which improves upon the result of Theorem \ref{Lund's_thm} for certain parameter regimes.

\begin{restatable}{theorem}{mainthmtwo}\label{main_thm2}
Let $\cP$ be a set of points in $\Fq^{n+d}$ and $\cF$ be a set of $n$-flats in $\Fq^{n+d}$. Then, the number of incidences between $\cP$ and $\cF$ satisfies
$$\abs{I(\cP,\cF)-\frac{|\cP||\cF|}{q^d}}\leq q^{n/2}\sqrt{|\cP||\cF|}\parenv{\frac{|\cF|}{q}+\frac{\qbinom{n+d}{d}_q}{q^{dn}}}^{1/2}$$ 
when $d\geq 2$ and
$$\abs{I(\cP,\cF)-\frac{|\cP||\cF|}{q}}\leq q^{n/2}(1-\frac{1}{q})\sqrt{|\cP||\cF|}\parenv{\frac{\qbinom{n+1}{1}_q}{q^{n}}}^{1/2}$$
when $d= 1$. 
\end{restatable}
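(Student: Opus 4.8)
# Proof Proposal for Theorem \ref{main_thm2}

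The plan is to reduce the count of incidences between $\cP$ and an arbitrary set $\cF$ of $n$-flats to a count of incidences between points and flats of the special form $F_{\bba_1,\ldots,\bba_d}$ from Example \ref{ex1}, so that Corollary \ref{coro_incidence_pts_flats} applies. The obstacle is that the family $\{F_{\bba_1,\ldots,\bba_d}\}$ does not contain \emph{all} $n$-flats of $\Fq^{n+d}$: it misses precisely those $n$-flats whose direction space (the underlying $n$-dimensional linear subspace) is not a graph over the first $n$ coordinates, i.e.\ those flats on which the projection $\pi:\Fq^{n+d}\to\Fq^n$ onto the first $n$ coordinates is not injective. So the first step is to partition $\cF$ according to the behaviour of this projection.

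First I would fix the general linear group $G=\mathrm{GL}(n+d,\Fq)$, or rather just the subgroup acting on coordinates, and observe that incidence counts are invariant under any affine bijection of $\Fq^{n+d}$. For a uniformly random element $g$ of a suitable group of coordinate changes, each fixed $n$-flat $F$ is mapped to a flat $g(F)$ of the special form $F_{\bba_1,\ldots,\bba_d}$ with probability equal to the fraction of $n$-subspaces that are graphs over the first $n$ coordinates; this fraction is $q^{dn}/\qbinom{n+d}{d}_q$, since there are $\qbinom{n+d}{d}_q$ subspaces of dimension $n$ in $\Fq^{n+d}$ (equivalently $\qbinom{n+d}{d}_q$ subspaces of codimension $d$) and exactly $q^{dn}$ of them are such graphs. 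Averaging the bound of Corollary \ref{coro_incidence_pts_flats} over $g$, using that $I(g(\cP),g(\cF'))=I(\cP,\cF')$ for the subfamily $\cF'\subseteq\cF$ that lands in the special form, and estimating the contribution of the flats that land outside the special form by the trivial bound (each flat carries $q^n$ points), should produce the stated estimate. Concretely, writing $N=\qbinom{n+d}{d}_q$ and letting $p=q^{dn}/N$ be the success probability, the expectation argument gives roughly
\[
\Bigl|I(\cP,\cF)-\tfrac{|\cP||\cF|}{q^d}\Bigr|
\;\le\; \frac{1}{p}\,\E_g\Bigl[\Bigl|I(g(\cP),g(\cF)\cap\text{special})-\tfrac{|\cP|\,|g(\cF)\cap\text{special}|}{q^d}\Bigr|\Bigr]
\;+\;(\text{error from non-special flats}),
\]
and after applying Corollary \ref{coro_incidence_pts_flats} inside the expectation together with Cauchy--Schwarz (to handle $\E_g\sqrt{|\text{special part}|}\le\sqrt{\E_g|\text{special part}|}=\sqrt{p|\cF|}$), the factor $1/p=N/q^{dn}$ combines with the $\sqrt{p}$ to yield the extra term $\qbinom{n+d}{d}_q/q^{dn}$ under the square root, while the $|\cF|/q$ term is inherited directly from the $(1+|\cF|/q)$ in Corollary \ref{coro_incidence_pts_flats} after the rescaling. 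The $d=1$ case is handled identically, using instead the $d=1$ branch of Corollary \ref{coro_incidence_pts_flats} and the fact that there $q^{n/2}(1-1/q)\sqrt{|\cP||\cF|}$ already has the right shape, so only the single factor $\qbinom{n+1}{1}_q/q^n = (q^{n+1}-1)/(q^n(q-1))$ appears.

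The main obstacle, and the step requiring the most care, is making the averaging argument \emph{quantitatively} tight rather than merely order-of-magnitude correct: one must choose the randomizing group so that (i) every special-form flat is hit by the same number of group elements (so that the averaging is unbiased and the constant is exactly $q^{dn}/\qbinom{n+d}{d}_q$), and (ii) the ``error from non-special flats'' is genuinely absorbed into the square-root term rather than dominating it. For (i) the natural choice is to conjugate by a random element of $\mathrm{GL}(n+d,\Fq)$ acting on the linear parts together with a random translation; by transitivity of $\mathrm{GL}$ on $n$-subspaces, each $n$-flat is equally likely to become special. For (ii), the number of non-special flats among $|g(\cF)|=|\cF|$ flats has expectation $(1-p)|\cF|$, contributing at most $(1-p)|\cF|\cdot q^n/|\cF|$-type terms to the normalized count; one checks this is dominated by $q^{n/2}\sqrt{|\cP||\cF|}\cdot(\qbinom{n+d}{d}_q/q^{dn})^{1/2}$ in the regime where the theorem improves on Theorem \ref{Lund's_thm}, and otherwise the bound is vacuous or weaker than Theorem \ref{Lund's_thm} anyway. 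A clean way to organize all of this is to prove the inequality first for the "balanced" quantity $\sum_{F\in\cF}(|F\cap\cP|-|\cP|/q^d)$ and apply the triangle inequality after averaging, deferring the Cauchy--Schwarz step to the very end.
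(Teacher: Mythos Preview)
Your approach is essentially the paper's: pick $g$ uniformly from the affine group $AGL(n+d,q)$, apply Corollary \ref{coro_incidence_pts_flats} to $(\cP_g,\cF_g)$ where $\cF_g=\{g(F):F\in\cF,\ g(F)\in\cF_0\}$, take expectations, and divide by the hitting probability $p=|\cF_0|/|\cF_1|=q^{d(n+1)}\big/\big(q^d\qbinom{n+d}{d}_q\big)=q^{dn}\big/\qbinom{n+d}{d}_q$.

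Two points need correction. First, the ``error from non-special flats'' that you worry about in item (ii) is \emph{exactly zero}, not something to be absorbed or bounded trivially. By transitivity of $AGL(n+d,q)$ on $n$-flats, every fixed incidence $(\bbp,F)$ survives in $(\cP_g,\cF_g)$ with probability exactly $p$, so $\E_g\bigl[I(\cP_g,\cF_g)\bigr]=p\,I(\cP,\cF)$ and $\E_g\bigl[|\cF_g|\bigr]=p\,|\cF|$ on the nose; hence
\[
p\,\Bigl|I(\cP,\cF)-\tfrac{|\cP||\cF|}{q^d}\Bigr|
=\Bigl|\E_g\Bigl[I(\cP_g,\cF_g)-\tfrac{|\cP||\cF_g|}{q^d}\Bigr]\Bigr|
\le \E_g\Bigl|I(\cP_g,\cF_g)-\tfrac{|\cP||\cF_g|}{q^d}\Bigr|,
\]
with no residual term.

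Second, the concavity step must be applied to the \emph{whole} right-hand side of Corollary \ref{coro_incidence_pts_flats}, i.e.\ Jensen on the concave function $f(x)=\sqrt{x+x^2/q}$ (this is what the paper does), not just on $\sqrt{x}$. If you bound $\E_g\sqrt{|\cF_g|}\le\sqrt{p|\cF|}$ and separately bound $(1+|\cF_g|/q)^{1/2}\le(1+|\cF|/q)^{1/2}$, after dividing by $p$ you obtain $\sqrt{|\cF|}\,\sqrt{(1+|\cF|/q)/p}$, which puts the factor $\qbinom{n+d}{d}_q/q^{dn}$ on \emph{both} summands under the root, not just on the constant one; that is strictly weaker than the stated theorem. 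Using Jensen on $f$ instead gives $\tfrac{1}{p}f(p|\cF|)=\sqrt{|\cF|/p+|\cF|^2/q}$, which is exactly the claimed bound.
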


\begin{remark}\label{rmk2}

Note that $\qbinom{n+d}{d}_q = q^{dn}(1 + o_q(1))$. Therefore, when $d = 1$, Theorem \ref{main_thm2} provides a factor of $\left(1 - \frac{1}{q} - o\left(\frac{1}{q}\right)\right)$ improvement on the bound of $\abs{I(\cP, \cF) - \frac{|\cP||\cF|}{q}}$ compared to Theorem \ref{Lund's_thm} in \cite{BIP14_arxiv}.

For $d \geq 2$, using the estimate $\qbinom{n+d}{d}_q = q^{dn}(1 + o_q(1))$, we have
\begin{align*}
    q^{n/2}\parenv{\frac{|\cF|}{q} + \frac{\qbinom{n+d}{d}_q}{q^{dn}}}^{1/2} &= q^{dn/2}\parenv{\frac{|\cF|+q(1 + o_q(1))}{q^{(d-1)n+1}}}^{1/2},
\end{align*}
which is less than $q^{dn/2}$ when $|\cF| < q^{(d-1)n + 1} - q(1 + o_q(1))$. Thus, compared to Theorem \ref{Lund's_thm}, Theorem \ref{main_thm2} yields an improvement by a factor of $\parenv{\frac{|\cF| + q(1 + o_q(1))}{q^{(d - 1)n+1}}}^{\frac{1}{2}}$ in the bound on $\abs{I(\cP, \cV) - \frac{|\cP||\cV|}{q^d}}$ when $|\cF| < q^{(d-1)n + 1} - q(1 + o_q(1))$.

\end{remark}

\subsection{An application to the pinned distances problem}
Let $q$ be an odd prime power, and let $\cP$ be a set of points in $\Fq^n$. For a fixed point $\bby = (y_1, \ldots, y_n) \in \Fq^n$, the \emph{pinned distance set} between the \emph{pin} $\bby$ and the point set $\cP$ is defined as follows:
\begin{equation}\label{eq_pinned_distance}
    \Delta(\cP, \bby) \triangleq \left\{\sum_{i=1}^{n}(x_i - y_i)^2 : \bbx = (x_1, \ldots, x_n) \in \cP\right\}.
\end{equation}

In \cite{CEHIK12}, Chapman, Erdo{\u{g}}an, Hart, Iosevich, and Koh proved that a sufficiently large set of points determines many pinned distances for many different pins. Here, we refer to the following version from \cite{CILRR17}.

\begin{theorem}\cite[Corollary 1 and Corollary 2]{CILRR17}\label{pinned_distance_Chapman12}
    Let $\cP$ be a set of points in $\Fq^{n}$ such that $|\cP|\geq \frac{\sqrt{1-\varepsilon}}{\varepsilon}q^{\frac{n+1}{2}}$ for some $0<\varepsilon<1$. Then
    $$\frac{\sum_{\bby\in\cP}|\Delta(\cP,\bby)|}{|\cP|}>(1-\varepsilon)q.$$
    Moreover, there is a subset of points $\cQ\subseteq \cP$ such that $|\cQ|\geq (1-\sqrt{\varepsilon})|\cP|$ and $|\Delta(\cP,\bby)|>(1-\sqrt{\varepsilon})q$ holds for every $\bby\in \cQ$.
\end{theorem}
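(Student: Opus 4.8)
The plan is to reduce the statement, by two applications of Cauchy--Schwarz, to an upper bound on the number of ``isosceles triples'' determined by $\cP$, and then to establish that bound from the spectral pseudorandomness of the point--hyperplane incidence structure — essentially the $d=1$ case of the analysis behind \Tref{main_thm1}. First I would set up the double counting. For a pin $\bby\in\cP$ and $t\in\Fq$, write $\nu_{\bby}(t)=\abs{\{\bbx\in\cP:\sum_{i=1}^{n}(x_i-y_i)^2=t\}}$, so that $\sum_{t}\nu_{\bby}(t)=\abs{\cP}$ and $\abs{\Delta(\cP,\bby)}=\abs{\{t:\nu_{\bby}(t)>0\}}$. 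Cauchy--Schwarz on the nonzero terms gives $\abs{\cP}^{2}\le\abs{\Delta(\cP,\bby)}\,r_{\bby}$ with $r_{\bby}:=\sum_{t}\nu_{\bby}(t)^{2}$, hence $\abs{\Delta(\cP,\bby)}\ge\abs{\cP}^{2}/r_{\bby}$; summing over $\bby$ and using $\sum_{\bby}1/r_{\bby}\ge\abs{\cP}^{2}/\sum_{\bby}r_{\bby}$ yields
\[
  \frac{1}{\abs{\cP}}\sum_{\bby\in\cP}\abs{\Delta(\cP,\bby)}\ \ge\ \frac{\abs{\cP}^{3}}{N},\qquad N:=\sum_{\bby\in\cP}\sum_{t\in\Fq}\nu_{\bby}(t)^{2},
\]
where $N=\abs{\{(\bbx,\bbx',\bby)\in\cP^{3}:\sum_{i}(x_i-y_i)^{2}=\sum_{i}(x_i'-y_i)^{2}\}}$ counts the triples in which the pin $\bby$ sees $\bbx$ and $\bbx'$ at equal distance. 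Everything comes down to showing $N\le\abs{\cP}^{3}/q+q^{n}\abs{\cP}$.

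To prove this $N$-bound I would argue in the spirit of \Tref{main_thm1}. Lift $\cP$ onto the paraboloid, $\cP^{\ast}:=\{(\bbx,\sum_i x_i^{2}):\bbx\in\cP\}\subseteq\Fq^{n+1}$. A one-line computation shows that $(\bbx,\sum_i x_i^{2})$ lies on the hyperplane $H_{\bby,t}:=\{(\bbz,z_{n+1}):z_{n+1}=2\sum_i y_iz_i-\sum_i y_i^{2}+t\}$ exactly when $\sum_i(x_i-y_i)^{2}=t$, so $\nu_{\bby}(t)=\abs{\cP^{\ast}\cap H_{\bby,t}}$; and because $q$ is odd, as $(\bby,t)$ ranges over $\Fq^{n}\times\Fq$ the hyperplanes $H_{\bby,t}$ run over precisely the flats $F_{\bbc}$, $\bbc\in\Fq^{n+1}$, of \Examref{ex1} with $d=1$. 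For each fixed $\bby$ the $q$ hyperplanes $\{H_{\bby,t}:t\in\Fq\}$ partition $\Fq^{n+1}$, so $\sum_t\nu_{\bby}(t)=\abs{\cP}$ and hence $\sum_t(\nu_\bby(t)-\abs{\cP}/q)=0$, which after expanding the square gives
\[
  N\ =\ \frac{\abs{\cP}^{3}}{q}\ +\ \sum_{\bby\in\cP}\sum_{t\in\Fq}\parenv{\nu_{\bby}(t)-\frac{\abs{\cP}}{q}}^{2}\ \le\ \frac{\abs{\cP}^{3}}{q}\ +\ \sum_{\bbc\in\Fq^{n+1}}\parenv{\abs{\cP^{\ast}\cap F_{\bbc}}-\frac{\abs{\cP}}{q}}^{2}.
\]
Letting $\bbM$ be the incidence matrix of the $q^{n+1}$ points of $\Fq^{n+1}$ against the $q^{n+1}$ flats $F_{\bbc}$ (which is $q^{n}$-biregular, so $\mathbf{1}$ is its top singular vector with singular value $q^{n}$) and $\bbg:=\mathbf{1}_{\cP^{\ast}}-\tfrac{\abs{\cP}}{q^{n+1}}\mathbf{1}\perp\mathbf{1}$, the last sum equals $\bbg^{\top}\bbM\bbM^{\top}\bbg\le\sigma_{2}^{2}\,\|\bbg\|^{2}\le q^{n}\abs{\cP}$, where $\sigma_{2}=q^{n/2}$ is the second largest singular value of $\bbM$ — precisely the spectral parameter that governs the $d=1$ case of \Tref{main_thm1}. (Equivalently, one expands $\mathbf{1}[\sum_i(x_i-y_i)^{2}=t]$ into additive characters: the trivial frequency contributes $\abs{\cP}^{3}/q$, and Parseval bounds the contribution of each of the $q-1$ nontrivial frequencies by $q^{n}\abs{\cP}$, giving the same estimate.)

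Finally I would assemble the pieces and handle the ``moreover'' part. Combining the two bounds, $\tfrac{1}{\abs{\cP}}\sum_{\bby}\abs{\Delta(\cP,\bby)}\ge\abs{\cP}^{3}/(\abs{\cP}^{3}/q+q^{n}\abs{\cP})=q/(1+q^{n+1}/\abs{\cP}^{2})$; the hypothesis $\abs{\cP}\ge\tfrac{\sqrt{1-\varepsilon}}{\varepsilon}q^{(n+1)/2}$ gives $q^{n+1}/\abs{\cP}^{2}\le\varepsilon^{2}/(1-\varepsilon)$, so this is at least $q(1-\varepsilon)/(1-\varepsilon+\varepsilon^{2})>(1-\varepsilon)q$, which is the first assertion. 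For the second, put $\cQ:=\{\bby\in\cP:\abs{\Delta(\cP,\bby)}>(1-\sqrt{\varepsilon})q\}$; since trivially $\abs{\Delta(\cP,\bby)}\le q$ for every $\bby$, the average bound forces $(1-\varepsilon)q\abs{\cP}<\abs{\cQ}q+(\abs{\cP}-\abs{\cQ})(1-\sqrt{\varepsilon})q$, which rearranges to $\abs{\cQ}>(1-\sqrt{\varepsilon})\abs{\cP}$.

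The main obstacle is the estimate $N\le\abs{\cP}^{3}/q+q^{n}\abs{\cP}$: the main term $\abs{\cP}^{3}/q$ must cancel exactly, and the residual discrepancy has to be controlled by the \emph{spectral gap} $\sigma_{2}=q^{n/2}$ of the incidence operator rather than by a first-moment incidence count (for instance by feeding \Lref{expander_mix_bipartite} applied to $I(\cP^{\ast},\{F_{\bbc}\})$ into the bound $\sum_{\bbc}\abs{\cP^{\ast}\cap F_{\bbc}}^{2}\le\sum_{\bbp\in\cP^{\ast}}I(\cP^{\ast},\{F_{\bbc}:\bbp\in F_{\bbc}\})$) — doing it that way wastes a factor $\sqrt{q}$ and only reaches the weaker threshold $\abs{\cP}\gtrsim q^{(n+2)/2}$. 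One also has to keep the oddness of $q$ in play, since that is what makes the paraboloid lift (and the accompanying quadratic character sums) behave.
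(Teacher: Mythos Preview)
Your argument is correct, but it follows a genuinely different route from the paper. The paper does not prove \Tref{pinned_distance_Chapman12} directly (it is cited from \cite{CILRR17}); instead it proves the sharper \Coref{pinned_distance_improved}, whose proof also establishes the cited theorem. That proof does \emph{not} pass through isosceles triples or a second--moment/$L^{2}$ bound. Rather, it builds the auxiliary point set $\tilde{\cP}=\{(\bbp,t):\bbp\in\cP,\ t\in\Delta(\cP,\bbp)\}\subseteq\Fq^{n+1}$ and the variety set $\cV=\{V(\bbp):\bbp\in\cP\}$ of paraboloids, observes that $I(\tilde{\cP},\cV)=|\cP|^{2}$ exactly, and then applies the $d=1$ incidence bound of \Tref{main_thm1} to get $|\cP|^{2}\le \tfrac{|\cP|\,|\tilde{\cP}|}{q}+q^{n/2}(1-\tfrac{1}{q})\sqrt{|\cP|\,|\tilde{\cP}|}$; solving this inequality for $|\tilde{\cP}|=\sum_{\bby}|\Delta(\cP,\bby)|$ gives the average bound. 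Your approach instead performs two Cauchy--Schwarz steps to reduce to $N=\sum_{\bby}\sum_{t}\nu_{\bby}(t)^{2}$ and controls $N$ by a second--moment spectral estimate $\|\bbM^{\top}\bbg\|^{2}\le\sigma_{2}^{2}\|\bbg\|^{2}$ with $\sigma_{2}=q^{n/2}$ --- this is closer in spirit to the original \cite{CEHIK12} argument. What the paper's route buys is brevity and a direct link to \Tref{main_thm1}, which is how the extra factor $(1-\tfrac{1}{q})$ (and hence the weaker hypothesis in \Coref{pinned_distance_improved}) emerges; what your route buys is self--containment once $\sigma_{2}$ is known, and it makes transparent that only the spectral gap of the point--hyperplane incidence matters. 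The ``moreover'' part is handled identically in both by the same pigeonhole on $\cQ=\{\bby:|\Delta(\cP,\bby)|>(1-\sqrt{\varepsilon})q\}$.
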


Using a similar approach as in \cite{CILRR17} (see also \cite{PPSVV16} and \cite{PTV17}), we apply Theorem \ref{main_thm1} to prove the result of Theorem \ref{pinned_distance_Chapman12} under a slightly weaker condition.

\begin{restatable}{corollary}{coropinneddistance}\label{pinned_distance_improved}
    Let $\cP$ be a set of points in $\Fq^{n}$ such that $|\cP|\geq \frac{\sqrt{1-\varepsilon}}{\varepsilon}(q^{\frac{n+1}{2}}-q^{\frac{n-1}{2}})$ for some $0<\varepsilon<1$. Then
    $$\frac{\sum_{\bby\in\cP}|\Delta(\cP,\bby)|}{|\cP|}\geq(1-\varepsilon)q.$$
    Moreover, there is a subset of points $\cQ\subseteq \cP$ such that $|\cQ|\geq (1-\sqrt{\varepsilon})|\cP|$ and $|\Delta(\cP,\bby)|\geq(1-\sqrt{\varepsilon})q$ holds for every $\bby\in \cQ$.
\end{restatable}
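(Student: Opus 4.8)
The plan is to run the usual double Cauchy--Schwarz reduction of the pinned‑distance problem to an ``energy'' count, and then to recognise that energy as a single point--flat incidence count governed by Theorem~\ref{main_thm1}. For a pin $\bby=(y_1,\dots,y_n)$ set $\nu(\bby,t)=\#\{\bbx\in\cP:\sum_{i=1}^n(x_i-y_i)^2=t\}$, so that $\sum_{t\in\Fq}\nu(\bby,t)=|\cP|$ and, by Cauchy--Schwarz over $\{t:\nu(\bby,t)>0\}$, $|\Delta(\cP,\bby)|\geq|\cP|^2/\sum_{t}\nu(\bby,t)^2$. Summing over $\bby\in\cP$ and using $\sum_{\bby}1/a_{\bby}\geq|\cP|^2/\sum_{\bby}a_{\bby}$ gives
$$\frac1{|\cP|}\sum_{\bby\in\cP}|\Delta(\cP,\bby)|\ \geq\ \frac{|\cP|^3}{Q},\qquad Q:=\sum_{\bby\in\cP}\sum_{t\in\Fq}\nu(\bby,t)^2,$$
so it suffices to prove $Q\leq|\cP|^3/\big((1-\varepsilon)q\big)$.

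To bound $Q$, lift $\cP$ to $\widetilde\cP:=\{(\bbx,\sum_i x_i^2):\bbx\in\cP\}\subseteq\Fq^{n+1}$, a set of $|\cP|$ points. For $\bby\in\Fq^n$, $t\in\Fq$ put $\bba(\bby,t):=(2y_1,\dots,2y_n,\,t-\sum_i y_i^2)\in\Fq^{n+1}$; expanding the square shows $(\bbx,\sum_i x_i^2)\in F_{\bba(\bby,t)}$ (the flat of Example~\ref{ex1} with $d=1$) if and only if $\sum_i(x_i-y_i)^2=t$, hence $\nu(\bby,t)=|\widetilde\cP\cap F_{\bba(\bby,t)}|$; since $q$ is odd, distinct $(\bby,t)$ give distinct $\bba(\bby,t)$ and hence distinct flats. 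Let $\cG$ be the multiset containing $F_{\bba(\bby,\|\bbx-\bby\|^2)}$ once for each ordered pair $(\bbx,\bby)\in\cP\times\cP$; equivalently, $\cG$ contains $F_{\bba(\bby,t)}$ with multiplicity $\nu(\bby,t)$. Then $|\cG|=|\cP|^2$, the squared $\ell^2$-norm of the multiplicity vector of $\cG$ equals $\sum_{\bby,t}\nu(\bby,t)^2=Q$, and
$$I(\widetilde\cP,\cG)=\sum_{(\bbx,\bby)\in\cP\times\cP}|\widetilde\cP\cap F_{\bba(\bby,\|\bbx-\bby\|^2)}|=\sum_{\bby\in\cP}\sum_{t}\nu(\bby,t)\cdot\nu(\bby,t)=Q.$$
Applying Corollary~\ref{coro_incidence_pts_flats} (equivalently Theorem~\ref{main_thm1}) with $d=1$ to the $|\cP|$ points $\widetilde\cP$ and the flats $\cG$ in $\Fq^{n+1}$ --- in the form valid for weighted families, where $\sqrt{|\cV|}$ in the error term is replaced by the $\ell^2$-norm of the multiplicity vector (this is immediate from the SVD proof, which only uses $\|\mathbf 1_{\cV}\|$) --- yields
$$\Big|\,Q-\frac{|\cP|^3}{q}\,\Big|\ \leq\ q^{n/2}\Big(1-\frac1q\Big)\sqrt{|\cP|\,Q}.$$

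It remains to solve this for $Q$. With $u=\sqrt Q$, $\alpha=|\cP|^3/q$ and $\beta=q^{n/2}(1-1/q)\sqrt{|\cP|}$, the bound reads $u^2-\beta u-\alpha\leq0$, so $Q=u^2\leq\big((\beta+\sqrt{\beta^2+4\alpha})/2\big)^2$. An elementary manipulation shows $(\beta+\sqrt{\beta^2+4\alpha})^2/4\leq\alpha/(1-\varepsilon)$ exactly when $\beta/\sqrt\alpha\leq\varepsilon/\sqrt{1-\varepsilon}$, and
$$\frac{\beta}{\sqrt\alpha}=\frac{q^{n/2}(1-1/q)\sqrt{|\cP|}}{\sqrt{|\cP|^3/q}}=\frac{q^{(n+1)/2}(1-1/q)}{|\cP|}=\frac{q^{(n+1)/2}-q^{(n-1)/2}}{|\cP|},$$
which is $\leq\varepsilon/\sqrt{1-\varepsilon}$ precisely under the hypothesis $|\cP|\geq\frac{\sqrt{1-\varepsilon}}{\varepsilon}\big(q^{(n+1)/2}-q^{(n-1)/2}\big)$. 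This gives $Q\leq|\cP|^3/((1-\varepsilon)q)$, hence $\frac1{|\cP|}\sum_{\bby\in\cP}|\Delta(\cP,\bby)|\geq(1-\varepsilon)q$. For the ``moreover'' part, let $\cB=\{\bby\in\cP:|\Delta(\cP,\bby)|<(1-\sqrt\varepsilon)q\}$; using $|\Delta(\cP,\bby)|\leq q$ for every $\bby$ one gets $\frac1{|\cP|}\sum_\bby|\Delta(\cP,\bby)|<q\big(1-\sqrt\varepsilon\,|\cB|/|\cP|\big)$, and comparing with $(1-\varepsilon)q$ forces $|\cB|<\sqrt\varepsilon|\cP|$, so $\cQ:=\cP\setminus\cB$ has $|\cQ|\geq(1-\sqrt\varepsilon)|\cP|$ and $|\Delta(\cP,\bby)|\geq(1-\sqrt\varepsilon)q$ on $\cQ$.

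The step I expect to be the crux is precisely the passage from $Q$, which is a quadratic (energy/cherry) quantity in $\cP$, to a \emph{single} use of the incidence bound: handling it flat-subfamily-by-flat-subfamily (apply the set version to each $\{F_{\bba(\bby,\|\bbx-\bby\|^2)}:\bby\in\cP\}$, $\bbx\in\cP$, then sum) loses a factor of $\sqrt q$ and only yields the weaker threshold $|\cP|\gtrsim\varepsilon^{-1}q^{(n+2)/2}$. Encoding $Q$ as $I(\widetilde\cP,\cG)$ with $\cG$ a weighted family of total size $|\cP|^2$ but $\ell^2$-weight only $\sqrt Q\approx|\cP|^{3/2}q^{-1/2}$ is exactly what recovers the extra $\sqrt q$; it also turns the incidence estimate into the self-referential inequality displayed above, and it is from that self-reference that the $\sqrt{1-\varepsilon}$ (rather than $1-\varepsilon$) in the constant emerges. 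One should therefore make the weighted version of Theorem~\ref{main_thm1} explicit (a one-line modification of the SVD argument), after which only routine checks remain: the elementary scalar inequality used in the last step, and that each $F_{\bba(\bby,t)}$ is genuinely a flat of the form in Example~\ref{ex1}.
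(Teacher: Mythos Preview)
Your argument is correct, but it takes a genuinely different (and longer) route than the paper. The paper never passes through the energy $Q=\sum_{\bby,t}\nu(\bby,t)^2$ at all: instead it sets $\tilde{\cP}=\{(\bbp,t):\bbp\in\cP,\ t\in\Delta(\cP,\bbp)\}$ and $\cV=\{V(\bbp):\bbp\in\cP\}$ (the paraboloid varieties of Example~\ref{ex2}), observes that $|\tilde{\cP}|=\sum_{\bby}|\Delta(\cP,\bby)|$, $|\cV|=|\cP|$, and $I(\tilde{\cP},\cV)=|\cP|^2$ \emph{exactly}, and then applies the unweighted $d=1$ case of Theorem~\ref{main_thm1} once. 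The contradiction inequality $|\cP|^2\leq\frac{|\tilde\cP||\cP|}{q}+q^{n/2}(1-\tfrac1q)\sqrt{|\tilde\cP||\cP|}$ with $|\tilde\cP|<(1-\varepsilon)q|\cP|$ yields the threshold directly, without any Cauchy--Schwarz step and without needing a weighted incidence bound.

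Your detour costs you three additional ingredients: two Cauchy--Schwarz applications (which happen to be sharp enough here), a weighted version of Theorem~\ref{main_thm1} (true, and indeed a one-line modification of the SVD proof since for $d=1$ the cross terms in Lemma~\ref{lem_proj_on_U} stay nonpositive for nonnegative weights, giving $|\beta_1|^2\leq(1-\tfrac1q)\|\bbw\|^2$), and the quadratic bootstrap in $\sqrt{Q}$. All of these are fine, and your scalar equivalence $(\beta+\sqrt{\beta^2+4\alpha})/2\leq\sqrt{\alpha/(1-\varepsilon)}\Leftrightarrow\beta/\sqrt\alpha\leq\varepsilon/\sqrt{1-\varepsilon}$ checks out. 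The upside of your route is that it is the standard $L^2$/energy framework, which may port more readily to settings where a clean exact-count identity like $I(\tilde{\cP},\cV)=|\cP|^2$ is unavailable; the paper's route buys simplicity by exploiting precisely that identity and staying entirely within the stated (set-valued) Theorem~\ref{main_thm1}. The ``moreover'' parts are identical in both proofs.
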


\subsection{Outline}
The rest of the paper is organized as follows: In Section \ref{sec: pre}, we introduce some notations and preliminary results. In Section \ref{sec: proof of main thm}, we prove Theorem \ref{main_thm1} by giving a full characterization of the singular values and corresponding left and right-singular vectors of the incidence matrix of the point-variety incidence graph. Then, in Section \ref{sec: app}, we prove Theorem \ref{main_thm2} and Corollary \ref{pinned_distance_improved}. Finally, we conclude the paper in Section \ref{sec: conclusion}.  

\section{Preliminaries and notations}\label{sec: pre}

Let $q$ be a prime power and $\Fq$ be the finite field of size $q$. For a prime $p$, if $\Fq$ is a degree $m$ extension over $\mathbb{F}_p$, then the trace function $\mathrm{Tr}:\Fq\rightarrow \mathbb{F}_p$ is defined as
\begin{equation}\label{eq_trace}
\mathrm{Tr}(a)=a+a^p+\cdots+a^{p^{m-1}}.
\end{equation}
Due to the linearity of the Frobenius-homomorphism, the trace function is also linear, i.e., $\mathrm{Tr}(a+b)=\mathrm{Tr}(a)+\mathrm{Tr}(b)$. Moreover, it is well-known that the set of characters (group-homomorphisms) from $\Fq^n$ to $\mathbb{C}^{*}$ are defined by the vectors of $\Fq^n$ as follows: The character defined by $\bbv\in \Fq^{n}$ is
\begin{equation}\label{eq_character}
    \chi_{\bbv}(\bba)\triangleq e\left(\mathrm{Tr}(\langle \bbv,\bba \rangle)\right),
\end{equation}
where $\langle \bbv,\bba \rangle=\sum_{i=1}^{n}v_ia_i$ is the standard inner product between vectors $\bbv$ and $\bba$, and $e(x)\triangleq \mathrm{e}^{\frac{2\pi i x}{p}}$. For a vector $\bbv=(v_1,v_2,\ldots,v_n)\in \Fq^n$ and a subset $R\subseteq [n]$, denote $\bbv|_{R}$ as the vector obtained by projecting the coordinates of $\bbv$ onto $R$.

\subsection{The point-variety incidence graph}\label{sec_PV_incidence_graph}

Consider the following incidence bipartite graph $G=(A\cup B, E)$, derived from the point-variety incidences as follows:
\begin{itemize}
    \item The vertex parts are, $A=\{V_{\bba_{1},\ldots,\bba_{d}}: \bba_{i}\in \Fq^{n+1},~1\leq i\leq d\}$, the set of all varieties of the form $V_{\bba_{1},\ldots,\bba_{d}}$ in $\Fq^{n+d}$, and $B=\Fq^{n+d}$, the set of all points in $\Fq^{n+d}$. 
    \item There is an edge between some $V_{\bba_{1},\ldots,\bba_{d}}$ and $\bbp\in \Fq^{n+d}$ if and only if $\bbp\in V_{\bba_{1},\ldots,\bba_{d}}$.
\end{itemize}
By its definition in (\ref{eq_variety_1}), the variety $V_{\bba_1,\ldots,\bba_d}$ contains exactly $q^n$ distinct points in $\Fq^{n+d}$. Thus, each vertex in $A$ has degree $q^n$. Next, by the following lemma from \cite{PPSVV16}, we show that each vertex in $B$ has degree $q^{dn}$. For completeness, we include its proof here.
\begin{lemma}\cite[Lemma 3.1]{PPSVV16}\label{lem_number of varieties}
    For any two distinct $d$-tuple of vectors $(\bba_1,\ldots,\bba_d),(\tilde{\bba}_1,\ldots,\tilde{\bba}_d)\in (\Fq^{n+1})^{d}$, we have $V_{\bba_1,\ldots,\bba_d}\neq V_{\tilde{\bba}_1,\ldots,\tilde{\bba}_d}$.
\end{lemma}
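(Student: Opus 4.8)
The plan is to show that the $d$-tuple of vectors can be recovered from the point set $V_{\bba_1,\ldots,\bba_d}$, which immediately gives the claimed injectivity. Recall from \eqref{eq_variety_1} that a point $(x_1,\ldots,x_{n+d})$ lies in $V_{\bba_1,\ldots,\bba_d}$ iff $x_{n+i}=(h_i+f_{\bba_i})(x_1,\ldots,x_n)$ for every $1\le i\le d$; since $x_1,\ldots,x_n$ range freely over $\Fq^n$, the variety is exactly the graph of the function $(x_1,\ldots,x_n)\mapsto\big((h_1+f_{\bba_1})(\bbx),\ldots,(h_d+f_{\bba_d})(\bbx)\big)$. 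Hence $V_{\bba_1,\ldots,\bba_d}=V_{\tilde\bba_1,\ldots,\tilde\bba_d}$ forces, for each $i$, the polynomial functions $h_i+f_{\bba_i}$ and $h_i+f_{\tilde\bba_i}$ to agree at every point of $\Fq^n$, and therefore $f_{\bba_i}$ and $f_{\tilde\bba_i}$ agree as functions on $\Fq^n$.

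So it suffices to show: if $f_{\bba_i}$ and $f_{\tilde\bba_i}$ induce the same function $\Fq^n\to\Fq$, then $\bba_i=\tilde\bba_i$. Writing out \eqref{eq_variety_0}, this says $\sum_{j=1}^n a_{i,j}x_j^{b_{i,j}}+a_{i,n+1}=\sum_{j=1}^n \tilde a_{i,j}x_j^{b_{i,j}}+\tilde a_{i,n+1}$ for all $\bbx\in\Fq^n$. First set $x_1=\cdots=x_n=0$ to get $a_{i,n+1}=\tilde a_{i,n+1}$. Then, fixing a coordinate $j$ and setting all other $x_\ell=0$, we get $a_{i,j}x_j^{b_{i,j}}=\tilde a_{i,j}x_j^{b_{i,j}}$ for all $x_j\in\Fq$; since $\gcd(b_{i,j},q-1)=1$ the map $x\mapsto x^{b_{i,j}}$ is a bijection of $\Fq$ (as noted in the paragraph preceding \eqref{eq_variety_0}), so in particular $x_j^{b_{i,j}}$ takes a nonzero value, and we conclude $a_{i,j}=\tilde a_{i,j}$. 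Doing this for every $j$ gives $\bba_i=\tilde\bba_i$, and doing it for every $i$ gives $(\bba_1,\ldots,\bba_d)=(\tilde\bba_1,\ldots,\tilde\bba_d)$, contradicting the assumption that the tuples are distinct.

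This argument is elementary; there is no real obstacle. The only point that deserves a word of care is that equality of varieties $V_{\bba_1,\ldots,\bba_d}=V_{\tilde\bba_1,\ldots,\tilde\bba_d}$ is equality of point sets in $\Fq^{n+d}$, and one must use that the first $n$ coordinates of points in the variety are unconstrained to deduce pointwise equality of the defining functions on all of $\Fq^n$ — rather than, say, only equality of the polynomials as formal expressions. Once that is in hand, the bijectivity of the power maps $x\mapsto x^{b_{i,j}}$ does the rest.
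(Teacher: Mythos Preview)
Your proof is correct. Both you and the paper begin by recognizing that $V_{\bba_1,\ldots,\bba_d}$ is the graph of the function $\bbx\mapsto\big((h_1+f_{\bba_1})(\bbx),\ldots,(h_d+f_{\bba_d})(\bbx)\big)$ over $\Fq^n$, so equality of the two varieties forces each difference $f_{\bba_i}-f_{\tilde\bba_i}$ to vanish identically on $\Fq^n$. The approaches diverge at this point. The paper reduces the exponents $b_{i,j}$ modulo $q-1$ to obtain a nonzero polynomial of total degree at most $q-1$ and then invokes the Schwartz--Zippel lemma to bound its zero set by $(q-1)q^{n-1}<q^n$, yielding a contradiction. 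Your route is more direct and entirely elementary: evaluating at the origin recovers the constant term, and then evaluating along each coordinate axis recovers the remaining coefficients one at a time. In fact your argument does not even need the bijectivity of $x\mapsto x^{b_{i,j}}$ (hence not the hypothesis $\gcd(b_{i,j},q-1)=1$): taking $x_j=1$ already gives $x_j^{b_{i,j}}=1\neq 0$, which suffices. The paper's Schwartz--Zippel route, while heavier for this particular lemma, would adapt more readily if the $f_{\bba_i}$ were replaced by more general low-degree polynomials rather than the diagonal form in \eqref{eq_variety_0}.
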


\begin{proof}
    Since $(\bba_1, \ldots, \bba_d) \neq (\tilde{\bba}_1, \ldots, \tilde{\bba}_d)$, we assume w.l.o.g. that $\bba_1 \neq \tilde{\bba}_1$. Then, we have
    $$
    f_{\bba_1} - f_{\tilde{\bba}_1} = \sum_{j=1}^{n}(a_{1,j} - \tilde{a}_{1,j})x_j^{b_{1,j}} + (a_{1,n+1} - \tilde{a}_{1,n+1}).
    $$
    Assume that $b_{1,j} \equiv \tilde{b}_{1,j} \mod (q-1)$ for some integer $1 \leq \tilde{b}_{1,j} \leq q-1$. Then, $f_{\bba_1} - f_{\tilde{\bba}_1}$ has the same number of zeros as 
    $$
    \sum_{j=1}^{n}(a_{1,j} - \tilde{a}_{1,j})x_j^{\tilde{b}_{1,j}} + (a_{1,n+1} - \tilde{a}_{1,n+1}),
    $$
    which is a non-zero polynomial of total degree at most $q-1$ in $\Fq[x_1, \ldots, x_n]$. Thus, by the well-known Schwartz-Zippel Lemma (refer to Theorem 6.13 in \cite{lidl1997}), 
    \begin{align}\label{eq_lem_1}
        \abs{\{\bbp = (p_1, \ldots, p_n) \in \Fq^n : (f_{\bba_1} - f_{\tilde{\bba}_1})(\bbp) = 0\}} \leq (q-1)q^{n-1} < q^n.
    \end{align}
        
    On the other hand, by (\ref{eq_variety_1}), it holds that 
    $$
    V_{\bba_1, \ldots, \bba_d} = \{\parenv{\bbp, (h_1 + f_{\bba_1})(\bbp), \ldots, (h_d + f_{\bba_d})(\bbp)} : \bbp \in \Fq^n\}.
    $$
    If $V_{\bba_1, \ldots, \bba_d} = V_{\tilde{\bba}_1, \ldots, \tilde{\bba}_d}$, then we have $(f_{\bba_1} - f_{\tilde{\bba}_1})(\bbp) = 0$ for every $\bbp \in \Fq^n$, which contradicts (\ref{eq_lem_1}). This concludes the proof.
\end{proof}
Next, by applying Lemma \ref{lem_number of varieties} we show that the graph $G$ is indeed biregular. 
\begin{lemma}\label{vertex_degree_in_B}
Each vertex of $B$ in the graph $G$ has degree $q^{dn}$.
\end{lemma}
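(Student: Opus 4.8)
\textbf{Proof proposal for Lemma \ref{vertex_degree_in_B}.}

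The plan is to fix an arbitrary point $\bbp=(p_1,\ldots,p_{n+d})\in B=\Fq^{n+d}$ and count directly the number of varieties of the form $V_{\bba_1,\ldots,\bba_d}$ that pass through it. First I would rewrite the incidence condition using the explicit description of the variety from \eqref{eq_variety_1}: recalling that
$$V_{\bba_1,\ldots,\bba_d}=\bracenv{\parenv{\bbx,(h_1+f_{\bba_1})(\bbx),\ldots,(h_d+f_{\bba_d})(\bbx)}:\bbx\in\Fq^n},$$
the point $\bbp$ lies on $V_{\bba_1,\ldots,\bba_d}$ if and only if, for every $1\leq i\leq d$,
$$p_{n+i}=h_i(p_1,\ldots,p_n)+\sum_{j=1}^{n}a_{i,j}p_j^{b_{i,j}}+a_{i,n+1}.$$
The key structural observation is that these $d$ conditions decouple: the $i$-th condition involves only the vector $\bba_i$. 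So the number of $d$-tuples $(\bba_1,\ldots,\bba_d)\in(\Fq^{n+1})^d$ satisfying all of them is the product over $i$ of the number of $\bba_i\in\Fq^{n+1}$ satisfying the $i$-th condition.

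Next I would count the solutions of a single one of these equations. Fixing $i$, the quantities $h_i(p_1,\ldots,p_n)$, $p_{n+i}$, and $p_j^{b_{i,j}}$ are all constants (the exponents $\bbb_i$ and the polynomials $h_i$ are fixed throughout), so the condition is an affine-linear equation in the unknowns $(a_{i,1},\ldots,a_{i,n+1})$ in which the coefficient of $a_{i,n+1}$ is $1\neq 0$. Hence for each of the $q^n$ choices of $(a_{i,1},\ldots,a_{i,n})$ there is a unique $a_{i,n+1}$ completing a solution, giving exactly $q^n$ admissible vectors $\bba_i$. Multiplying over $i=1,\ldots,d$ yields exactly $q^{dn}$ tuples $(\bba_1,\ldots,\bba_d)$ with $\bbp\in V_{\bba_1,\ldots,\bba_d}$.

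Finally, to convert this count of tuples into a count of \emph{varieties} (which is what the degree of $\bbp$ in $G$ measures), I would invoke Lemma \ref{lem_number of varieties}: distinct $d$-tuples define distinct varieties, so the map from admissible tuples to varieties through $\bbp$ is a bijection, and the degree of $\bbp$ is therefore $q^{dn}$. There is no real obstacle here; the only point that requires care is remembering to apply Lemma \ref{lem_number of varieties} so that the $q^{dn}$ tuples are not overcounting a smaller collection of varieties. Combined with the already-noted fact that every vertex of $A$ has degree $q^n$, this shows $G$ is biregular.
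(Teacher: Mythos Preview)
Your proposal is correct and follows essentially the same approach as the paper: write the incidence condition $\bbp\in V_{\bba_1,\ldots,\bba_d}$ as $d$ decoupled affine-linear equations in the $\bba_i$, observe each has $q^n$ solutions, multiply, and invoke Lemma~\ref{lem_number of varieties} to identify tuples with varieties. The paper's writeup differs only cosmetically (it displays the $i$-th equation in matrix-vector form), and you are slightly more explicit about why the equation has $q^n$ solutions (the coefficient of $a_{i,n+1}$ is $1$).
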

\begin{proof}
    Recall that for each point $\bbp \in \Fq^{n+d}$, as a vertex in $B$, its neighborhood in $G$ consists of the varieties of the form $V_{\bba_1,\ldots,\bba_d}$ such that $\bbp \in V_{\bba_1,\ldots,\bba_d}$. Thus, by Lemma \ref{lem_number of varieties}, the degree of $\bbp$ in the graph $G$ is equal to
    $$
    \abs{\left\{(\bba_1, \ldots, \bba_d) \in \Fq^{n+1} : \bbp \in V_{\bba_1,\ldots,\bba_d}\right\}}.
    $$    
    
    Meanwhile, by (\ref{eq_variety_0}) and (\ref{eq_variety_1}), $\bbp \in V_{\bba_1,\ldots,\bba_d}$ is equivalent to
    \begin{equation*}
    (p_{1}^{b_{i,1}}, \ldots, p_{n}^{b_{i,n}}, 1) \cdot 
    \left(\begin{array}{c}
        a_{i,1}   \\
        \vdots    \\
        a_{i,n}   \\
        a_{i,n+1} 
    \end{array}\right) = p_{n+i} - h_i(\bbp|_{[n]})
    \end{equation*}
    holding for every $i \in [d]$. Moreover, for each $i \in [d]$, there are exactly $q^n$ distinct $\bba_i \in \Fq^{n+1}$ satisfying the above linear equation. Therefore, we have 
    $$
    \abs{\left\{(\bba_1, \ldots, \bba_d) \in \Fq^{n+1} : \bbp \in V_{\bba_1,\ldots,\bba_d}\right\}} = q^{dn},
    $$
    which concludes the proof.
\end{proof}

Let $\bbT$ be the $q^{d(n+1)}\times q^{n+d}$ point-variety incidence matrix derived from the above bipartite graph. The rows of $\bbT$ are indexed by all the varieties of form $V_{\bba_{1},\ldots,\bba_{d}}$ and the columns of $\bbT$ are indexed by all the points in $\Fq^{n+d}$. Then, the entries of $\bbT$ satisfy
\begin{equation}\label{eq_incidence_matrix}
    \bbT(V_{\bba_{1},\ldots,\bba_{d}},\bbp)=\begin{cases}
    1,~\mathrm{if}~\bbp\in V_{\bba_{1},\ldots,\bba_{d}};\\
    0,~\mathrm{otherwise}.
    \end{cases}
\end{equation}

Throughout the paper, we use $\bbA'$ and $\bbA^{*}$ to denote the transpose and the Hermitian transpose of the matrix $\bbA$, respectively. Following a similar idea as that in \cite{Zachi23}, our main result will follow by determining the singular value decomposition (SVD) of the matrix $\bbT$. Then, we apply this decomposition in conjuction with an analysis similar to the expander mixing lemma \cite{AC88}. To this end, we will explore the eigenvalues and eigenvectors of the symmetric matrix $\bbT\cdot \bbT^*$ and $\bbT^*\cdot \bbT$.

\subsection{Singular value decomposition}

Let $\bbT$ be an $m\times n$ complex matrix and assume w.l.o.g. that $m\geq n$. The Singular Value Decomposition (SVD) of $\bbT$ is a factorization of the form $\bbT=\bbU\cdot \mathbf{\Sigma}\cdot \bbV^{*}$, where:
\begin{itemize}
    \item $\bbU$ is an $m\times m$ complex unitary matrix;
    \item $\mathbf{\Sigma}$ is an $m\times n$ rectangular diagonal matrix with non-negative real numbers on the diagonal;
    \item $\bbV$ is an $n\times n$ complex unitary matrix.
\end{itemize}
The nonzero elements $\sigma_{i,i}$ on the main diagonal of $\mathbf{\Sigma}$ are the singular values of $\bbT$. Then, the columns of $\bbU$ (left-singular vectors), denoted by
$\bbu_1',\ldots,\bbu_m'$, and the columns of $\bbV$ (right-singular vectors), denoted by $\bbv_1',\ldots,\bbv_n'$, form two sets of orthonormal bases respectively.

Writing
$\mathbf{\Sigma}=\left(\begin{array}{c}
     \bbD  \\
     \mathbf{0}_{(m-n)\times n}
\end{array}\right)$,
where $\bbD$ is an $n\times n$ diagonal matrix and $\mathbf{0}_{(m-n)\times n}$ is an $(m-n)\times n$ zero matrix, we have
\begin{equation}\label{eq_TT^*}
    \bbT\cdot \bbT^*=\bbU\cdot \left(\begin{array}{cc}
     \bbD^2 & \mathbf{0}_{n\times (m-n)}  \\
     \mathbf{0}_{(m-n)\times n} & \mathbf{0}_{(m-n)\times (m-n)}
\end{array}\right)\cdot \bbU^*,
\end{equation}
and 
\begin{equation}\label{eq_T^*T}
    \bbT^*\cdot \bbT=\bbV\cdot \bbD^2 \cdot \bbV^*.
\end{equation}
Clearly, both $\bbT\cdot \bbT^*$ and $\bbT^*\cdot \bbT$ are symmetric positive semi-definite matrices and have the same positive eigenvalues. Furthermore, a positive $\lambda$ is an eigenvalue of $\bbT^{*}\cdot \bbT$ corresponding to an eigenspace of dimension $d$ if and only if it is also an eigenvalue of $\bbT\cdot \bbT^*$ corresponding to an eigenspace of the same dimension $d$. This equivalence holds if and only if the matrix $\bbT$ has exactly $d$ singular values equal to $\sqrt{\lambda}$.

\subsection{Group algebra}

For the abelian group $\Fq^n$, its group algebra over $\mathbb{C}$, denoted by $\mathbb{C}[\Fq^n]$, consists of all possible formal sums of the variables $x_{\bbu}$, $\bbu\in \Fq^n$, over $\mathbb{C}$. That is
\begin{equation*}
    \mathbb{C}[\Fq^n]=\left\{\sum_{\bbu\in\Fq^n}c_{\bbu}x_{\bbu}:~c_{\bbu}\in \mathbb{C}\right\}.
\end{equation*}
For elements $\sum_{\bbu\in\Fq^n}c_{\bbu}x_{\bbu}$ and $\sum_{\bbu\in\Fq^n}\tilde{c}_{\bbu}x_{\bbu}$, their addition in this algebra is defined component-wise,
\begin{equation*}
    \sum_{\bbu\in\Fq^n}c_{\bbu}x_{\bbu}+\sum_{\bbu\in\Fq^n}\tilde{c}_{\bbu}x_{\bbu}\triangleq\sum_{\bbu\in\Fq^n}(c_{\bbu}+\tilde{c}_{\bbu})x_{\bbu},
\end{equation*}
while the multiplication is defined via the addition in the group $\Fq^n$:
\begin{equation*}
    \sum_{\bbu\in\Fq^n}c_{\bbu}x_{\bbu}\cdot \sum_{\bbu\in\Fq^n}\tilde{c}_{\bbu}x_{\bbu}\triangleq\sum_{\bbu\in\Fq^n}\left(\sum_{\tilde{\bbu}\in \Fq^n}c_{\bbu-\tilde{\bbu}}\tilde{c}_{\tilde{\bbu}}\right)x_{\bbu}.
\end{equation*}

Each element $f=\sum_{\bbu\in\Fq^n}c_{\bbu}x_{\bbu}\in \mathbb{C}[\Fq^n]$ can be viewed as a function $f:\Fq^n\rightarrow \mathbb{C}$ (or as a vector of length $q^n$, indexed by the elements of $\Fq^n$), by simply setting $f(\bbu)=c_{\bbu}$ for any $\bbu\in \Fq^n$. Thus, the group algebra $\mathbb{C}[\Fq^n]$ is a vector space of dimension $q^n$ over $\mathbb{C}$, and the elements $\{x_{\bbu}\}_{\bbu\in \Fq^n}$ form a basis called the standard basis. Another important basis of $\mathbb{C}[\Fq^n]$ is defined by the set of characters of $\Fq^n$
\begin{equation*}
    \left\{\sum_{\bbu\in\Fq^n}\chi_{\bbv}(\bbu)x_{\bbu}\right\}_{\bbv\in\Fq^n}.
\end{equation*}
In the following, if there is no confusion, we will also use $\chi_{\bbv}$ to refer to $\sum_{\bbu\in\Fq^n}\chi_{\bbv}(\bbu)x_{\bbu}$.
Indeed, the characters are linearly independent as they are orthogonal under the inner product defined as
\begin{equation*}
    \langle f,g \rangle=\sum_{\bbu\in \Fq^n}f(\bbu)\overline{g(\bbu)},
\end{equation*}
for any $f,g\in \mathbb{C}[\Fq^n]$ and they form a basis of $\mathbb{C}[\Fq^n]$ since the number of characters equals the dimension of the vector space.

\begin{definition}[Multiplication operators of the group algebra]\label{def_Group_algebra_multiplication}
    Given an element $z\in\mathbb{C}[\Fq^n]$, we define the linear operator $\phi_{z}:\mathbb{C}[\Fq^n]\rightarrow \mathbb{C}[\Fq^n]$ as follows:
    $$\phi_{z}(f)\triangleq z\cdot f,$$
    for any $f\in \mathbb{C}[\Fq^n]$. Moreover, since the group $\Fq^{n}$ is commutative, we have $\phi_{z}(f)= f\cdot z$.
\end{definition}

For the proof of our results, we also need the following well-known result. For completeness, we include its proof here as it is given, e.g., in \cite{Zachi23}.

\begin{proposition}\label{prop_character_eigenvector}\cite[Proposition 2.4]{Zachi23} 
Let $z\in\mathbb{C}[\Fq^n]$. Then, any character $\chi_{\bbv}$ of $\Fq^n$ is an eigenvector of $\phi_z$ with eigenvalue $\langle z,\chi_{\bbv} \rangle$
\end{proposition}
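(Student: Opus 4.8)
The plan is to prove Proposition~\ref{prop_character_eigenvector} by a direct computation: we show that $\phi_z(\chi_{\bbv})$ is a scalar multiple of $\chi_{\bbv}$ and identify the scalar. The essential fact driving everything is that the characters are \emph{multiplicative} with respect to the group operation, i.e. $\chi_{\bbv}(\bbu + \tilde\bbu) = \chi_{\bbv}(\bbu)\chi_{\bbv}(\tilde\bbu)$, which is immediate from the definition $\chi_{\bbv}(\bba) = e(\mathrm{Tr}(\langle \bbv,\bba\rangle))$ and the linearity of both $\mathrm{Tr}$ and the inner product in the second argument.

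First I would write $z = \sum_{\bbw \in \Fq^n} c_{\bbw} x_{\bbw}$ in the standard basis. Then, using the definition of multiplication in $\mathbb{C}[\Fq^n]$ and the identification of $\chi_{\bbv}$ with the element $\sum_{\bbu} \chi_{\bbv}(\bbu) x_{\bbu}$, compute
\[
\phi_z(\chi_{\bbv}) = z \cdot \chi_{\bbv} = \sum_{\bbu \in \Fq^n}\left(\sum_{\tilde\bbu \in \Fq^n} c_{\bbu - \tilde\bbu}\,\chi_{\bbv}(\tilde\bbu)\right) x_{\bbu}.
\]
Now substitute $\bbw = \bbu - \tilde\bbu$ in the inner sum, so that $\tilde\bbu = \bbu - \bbw$, and use multiplicativity of $\chi_{\bbv}$ to write $\chi_{\bbv}(\bbu - \bbw) = \chi_{\bbv}(\bbu)\chi_{\bbv}(-\bbw) = \chi_{\bbv}(\bbu)\overline{\chi_{\bbv}(\bbw)}$ (since $\chi_{\bbv}$ takes values that are roots of unity, $\chi_{\bbv}(-\bbw) = \chi_{\bbv}(\bbw)^{-1} = \overline{\chi_{\bbv}(\bbw)}$). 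The inner sum then factors as $\chi_{\bbv}(\bbu)\sum_{\bbw} c_{\bbw}\overline{\chi_{\bbv}(\bbw)}$, and the sum over $\bbw$ is exactly $\langle z, \chi_{\bbv}\rangle$ by the definition of the inner product on $\mathbb{C}[\Fq^n]$. Pulling this scalar out gives $\phi_z(\chi_{\bbv}) = \langle z, \chi_{\bbv}\rangle \sum_{\bbu} \chi_{\bbv}(\bbu) x_{\bbu} = \langle z, \chi_{\bbv}\rangle\, \chi_{\bbv}$, which is the claim.

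There is no real obstacle here — the statement is a standard fact about group algebras of finite abelian groups (characters diagonalize the regular representation), and the only mildly delicate point is bookkeeping the conjugation: one must be careful that the inner product $\langle f,g\rangle = \sum_\bbu f(\bbu)\overline{g(\bbu)}$ conjugates the second argument, which is precisely what makes the $\overline{\chi_{\bbv}(\bbw)}$ arising from $\chi_{\bbv}(-\bbw)$ match up with $\langle z,\chi_{\bbv}\rangle$ rather than with some twisted variant. An alternative, essentially equivalent route is to note that the convolution identity shows $x_{\bbw} \cdot \chi_{\bbv} = \overline{\chi_{\bbv}(\bbw)}\,\chi_{\bbv}$ for each standard basis element $x_{\bbw}$ (the translate of $\chi_{\bbv}$ by $\bbw$ is a scalar multiple of $\chi_{\bbv}$), and then extend by linearity in $z$; I would likely present the computation in the consolidated form above rather than splitting it into the per-basis-element version, but either is acceptable.
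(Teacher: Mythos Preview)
Your proof is correct and follows essentially the same approach as the paper's: both expand the convolution $z\cdot\chi_{\bbv}$ directly, use the multiplicativity of the character to write $\chi_{\bbv}(\bbu-\bbw)=\chi_{\bbv}(\bbu)\overline{\chi_{\bbv}(\bbw)}$, and then recognize the resulting inner sum as $\langle z,\chi_{\bbv}\rangle$. The only cosmetic difference is that you make the substitution $\bbw=\bbu-\tilde\bbu$ explicit, whereas the paper's computation leaves it implicit in how the convolution is expanded.
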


\begin{proof}
    The result follows from the following calculation:
    \begin{align*}
        \phi_z(\chi)&=z\cdot \chi=\left(\sum_{\bbv\in \Fq^n}z_{\bbv}x_{\bbv}\right)\left(\sum_{\tilde{\bbv}\in \Fq^n}\chi(\tilde{\bbv})x_{\tilde{\bbv}}\right)\\
        &=\sum_{\bbv\in \Fq^n}\sum_{\tilde{\bbv}\in \Fq^n}z_{\tilde{\bbv}}\chi(\bbv-\tilde{\bbv})x_{\bbv}\\
        &=\sum_{\bbv\in \Fq^n}\sum_{\tilde{\bbv}\in \Fq^n}z_{\tilde{\bbv}}\chi(\bbv)\overline{\chi(\tilde{\bbv})}x_{\bbv}\\
        &=\sum_{\bbv\in \Fq^n}\langle z,\chi \rangle \chi(\bbv)x_{\bbv}=\langle z,\chi \rangle\cdot \chi.
    \end{align*}
\end{proof}

\section{Proof of the main results}\label{sec: proof of main thm}

In this section, we present the proof of our main result, Theorem \ref{main_thm1}. To this end, we first show that the matrices $ \bbT^{*} \cdot \bbT $ and $ \bbT \cdot \bbT^{*} $ correspond to certain multiplication operators in two distinct group algebras. Then, by applying Proposition \ref{prop_character_eigenvector}, we fully characterize their eigenvalues and eigenvectors in Theorems \ref{thm_eigenvalue_T^*T} and \ref{thm_eigenvalue_TT^*}. Building on these characterizations, we demonstrate that for any point set $ \cP $ and variety set $ \cV $ of the form \eqref{eq_variety_1}, their corresponding indicator vectors $ \mathbf{1}_{\cP} $ and $ \mathbf{1}_{\cV} $ have substantial projections onto the zero eigenspace. Finally, employing an approach similar to the proof of Lemma \ref{expander_mix_bipartite}, we establish Theorem \ref{main_thm1}.

\subsection{Right singular vectors of the incidence matrix}\label{sec: right singular vectors}

In this section, we give a complete description of the right singular vectors of $\bbT$ by considering the matrix $\bbA= \bbT^{*}\cdot \bbT$.

By (\ref{eq_incidence_matrix}) and (\ref{eq_T^*T}), $\bbA$ is a square matrix of order $q^{n+d}$, whose rows and columns  are indexed by the points of $\Fq^{n+d}$. Moreover, for points $\bbu,\bbv\in \Fq^{n+d}$, the entry $\bbA_{\bbu,\bbv}$ equals to the number of varieties $V_{\bba_1,\ldots,\bba_d}$ containing both $\bbu$ and $\bbv$. Thus, by the properties of the incidence bipartite graph $G$, we can calculate $\bbA_{\bbu,\bbv}$ as follows.

\begin{lemma}\label{lem_entry_T^*T}
For any two points $\bbu,\bbv\in \Fq^{n+d}$, it holds that
\begin{equation*}
    \bbA_{\bbu,\bbv}=\left\{\begin{array}{cc}
        q^{dn}, & \mathrm{if}~\bbu=\bbv; \\
        q^{d(n-1)}, & \mathrm{if}~\bbu|_{[n]}\neq\bbv|_{[n]};\\
        0, & \mathrm{otherwise}.
    \end{array}\right.
\end{equation*}   
\end{lemma}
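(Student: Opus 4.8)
The plan is to compute the entries of $\bbA = \bbT^* \cdot \bbT$ directly from the combinatorial description of the incidence graph $G$, using the parametrization of varieties established in Lemma~\ref{lem_number of varieties} and the degree count from Lemma~\ref{vertex_degree_in_B}. Since $\bbA_{\bbu,\bbv}$ counts the number of varieties $V_{\bba_1,\ldots,\bba_d}$ passing through both $\bbu$ and $\bbv$, and by Lemma~\ref{lem_number of varieties} distinct $d$-tuples $(\bba_1,\ldots,\bba_d)$ give distinct varieties, this equals the number of $d$-tuples $(\bba_1,\ldots,\bba_d) \in (\Fq^{n+1})^d$ such that both points lie on $V_{\bba_1,\ldots,\bba_d}$. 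Using the reformulation from the proof of Lemma~\ref{vertex_degree_in_B}, the condition $\bbp \in V_{\bba_1,\ldots,\bba_d}$ splits into $d$ independent linear constraints, one for each $i \in [d]$, of the form
$$(p_1^{b_{i,1}},\ldots,p_n^{b_{i,n}},1)\cdot \bba_i = p_{n+i} - h_i(\bbp|_{[n]}).$$
Thus the whole count factors as a product over $i \in [d]$ of the number of $\bba_i \in \Fq^{n+1}$ simultaneously satisfying the constraint coming from $\bbu$ and the constraint coming from $\bbv$.

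So the heart of the proof is a case analysis on a single coordinate block $i$: given $\bbu, \bbv$, count $\bba_i \in \Fq^{n+1}$ satisfying two linear equations $L_{\bbu}(\bba_i) = r_{\bbu}$ and $L_{\bbv}(\bba_i) = r_{\bbv}$, where $L_{\bbu}$ has coefficient vector $(u_1^{b_{i,1}},\ldots,u_n^{b_{i,n}},1)$ and similarly for $\bbv$, with right-hand sides $u_{n+i} - h_i(\bbu|_{[n]})$ and $v_{n+i} - h_i(\bbv|_{[n]})$. If $\bbu = \bbv$ the two equations coincide and there are $q^n$ solutions in this block, giving $q^{dn}$ overall — the first case. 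If $\bbu|_{[n]} \neq \bbv|_{[n]}$, then since $\gcd(b_{i,j}, q-1) = 1$ the map $x \mapsto x^{b_{i,j}}$ is a bijection of $\Fq$ fixing $0$, so $\bbu|_{[n]} \neq \bbv|_{[n]}$ forces $(u_1^{b_{i,1}},\ldots,u_n^{b_{i,n}}) \neq (v_1^{b_{i,1}},\ldots,v_n^{b_{i,n}})$; hence the two coefficient vectors (both ending in $1$) are linearly independent, the two linear equations are independent, and there are exactly $q^{n-1}$ solutions per block, giving $q^{d(n-1)}$ overall — the second case. The remaining case is $\bbu|_{[n]} = \bbv|_{[n]}$ but $\bbu \neq \bbv$: then the two coefficient vectors are \emph{equal}, but because $\bbu \neq \bbv$ there is some block $i$ with $u_{n+i} \neq v_{n+i}$, so $h_i(\bbu|_{[n]}) = h_i(\bbv|_{[n]})$ forces the right-hand sides to differ while the left-hand sides are identical — the system for that block is inconsistent, the count is $0$, and the whole product vanishes.

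The main obstacle — really the only subtle point — is verifying the linear independence (or, in the third case, the equality followed by inconsistency) of the two coefficient vectors, and this is exactly where the hypotheses $\gcd(b_{i,j}, q-1) = 1$ and the fixed form of the $h_i$'s enter: one must observe that the $x^{b_{i,j}}$ maps are field permutations fixing $0$, so they distinguish coordinates exactly as the identity would, and that the constant $1$ in the last slot guarantees independence whenever the first $n$ entries merely differ (they need not be "independent" in any stronger sense). Everything else is the routine fact that a consistent system of $k$ independent linear equations in $n+1$ unknowns over $\Fq$ has $q^{n+1-k}$ solutions, combined with multiplicativity across the $d$ blocks. I would present the block-count lemma first (one equation: $q^n$ solutions; two independent equations: $q^{n-1}$; two inconsistent equations: $0$), then assemble the three cases by taking the product over $i \in [d]$, which immediately yields the stated values $q^{dn}$, $q^{d(n-1)}$, and $0$.
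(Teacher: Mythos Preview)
Your proposal is correct and follows essentially the same route as the paper: both reduce $\bbA_{\bbu,\bbv}$ to counting tuples $(\bba_1,\ldots,\bba_d)$ via Lemma~\ref{lem_number of varieties}, factor across the $d$ blocks, and then analyze each block as a pair of linear equations whose coefficient vectors are equal, linearly independent, or inconsistent according to the same three cases, using that $x\mapsto x^{b_{i,j}}$ is a permutation of $\Fq$. The only cosmetic difference is that the paper packages the second case as ``the $2\times(n+1)$ coefficient matrix has rank $2$'' rather than your phrasing ``two vectors ending in $1$ with distinct first $n$ entries are linearly independent,'' but these are the same observation.
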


\begin{proof}
    Assume that $\bbu=(u_1,\ldots,u_{n+d})$ and $\bbv=(v_1,\ldots,v_{n+d})$. When $\bbu=\bbv$, the result follows directly by the fact that the point-variety incidence graph $G$ defined in Section \ref{sec_PV_incidence_graph} is biregular with each vertex in $B$ having degree $q^{dn}$. Next, we count the number of varieties $V_{\bba_1,\ldots,\bba_d}$ containing both $\bbu$ and $\bbv$ for the case when $\bbu\neq\bbv$.

    By the definition of $V_{\bba_1,\ldots,\bba_d}$ in (\ref{eq_variety_0}) and (\ref{eq_variety_1}), $\bbu\in V_{\bba_1,\ldots,\bba_d}$ is equivalent to
    \begin{equation*}
    (u_{1}^{b_{i,1}},\ldots,u_{n}^{b_{i,n}},1)\cdot 
    \left(\begin{array}{c}
        a_{i,1}   \\
        \vdots    \\
        a_{i,n}   \\
        a_{i,n+1} 
    \end{array}\right)=u_{n+i}-h_i(\bbu|_{[n]})
    \end{equation*}
    holds for every $1\leq i\leq d$. Thus, if both $\bbu$ and $\bbv$ are contained in $V_{\bba_1,\ldots,\bba_d}$, then it holds for every $1\leq i\leq d$ that
    \begin{equation}\label{eq1_lem_entry_T^*T}
    \left(\begin{array}{cccc}
    u_{1}^{b_{i,1}} & \ldots & u_{n}^{b_{i,n}} & 1 \\
    v_{1}^{b_{i,1}} & \ldots & v_{n}^{b_{i,n}} & 1 
    \end{array}\right)\cdot 
    \left(\begin{array}{c}
        a_{i,1}   \\
        \vdots    \\
        a_{i,n}   \\
        a_{i,n+1} 
    \end{array}\right)=\left(\begin{array}{c}
        u_{n+i}-h_i(\bbu|_{[n]})   \\
        v_{n+i}-h_i(\bbv|_{[n]})    
    \end{array}\right).
    \end{equation}
    Since the $b_{i,j}$'s are fixed positive integers satisfying $\mathrm{gcd}(b_{i,j},q-1)=1$, then 
 the equality  $\alpha^{b_{i,j}}=\beta^{b_{i,j}}$ for $\alpha,\beta\in \Fq$  implies that $\alpha=\beta$. Thus, when $\bbu|_{[n]}\neq\bbv|_{[n]}$, 
    $$
    \left(\begin{array}{cccc}
    u_{1}^{b_{i,1}} & \ldots & u_{n}^{b_{i,n}} & 1 \\
    v_{1}^{b_{i,1}} & \ldots & v_{n}^{b_{i,n}} & 1 
    \end{array}\right)
    $$
    has rank $2$ over $\Fq$ for every $1\leq i\leq d$. This implies that for each $1\leq i\leq d$, there are exactly $q^{n-1}$ different choices of $(a_{i,1},\ldots,a_{i,n+1})$ satisfying (\ref{eq1_lem_entry_T^*T}). Therefore, there are $q^{d(n-1)}$ different varieties $V_{\bba_1,\ldots,\bba_d}$ containing both $\bbu$ and $\bbv$.

    When $\bbu\neq\bbv$ but $\bbu|_{[n]}=\bbv|_{[n]}$, there is an $1\leq i\leq d$ for which  $(u_{1}^{b_{i,1}},\ldots,u_{n}^{b_{i,n}},1)=(v_{1}^{b_{i,1}},\ldots,v_{n}^{b_{i,n}},1)$, but $u_{n+i}-h_i(\bbu|_{[n]})\neq v_{n+i}-h_i(\bbv|_{[n]})$ cannot hold since  $h_i(\bbu|_{[n]})=h_i(\bbv|_{[n]})$. 
    Therefore, there is no $V_{\bba_1,\ldots,\bba_d}$ containing both $\bbu$ and $\bbv$.
\end{proof}

The following lemma shows that the matrix $\bbA$ can be viewed as a representation of some multiplication operator under the standard basis of $\mathbb{C}[\Fq^{n+d}]$.

\begin{lemma}\label{lem_representation_T^*T}
    Let $f=\sum_{\bbu\in\Fq^{n+d}}c_{\bbu}x_{\bbu}\in \mathbb{C}[\Fq^{n+d}]$, where
    \begin{equation*}
        c_{\bbu}=\left\{\begin{array}{cc}
        q^{dn}, & \mathrm{if}~\bbu=\mathbf{0};\\
        q^{d(n-1)}, & \mathrm{if}~\bbu|_{[n]}\neq\mathbf{0};\\
        0, & \mathrm{otherwise}.
    \end{array}\right.
    \end{equation*}
    Then, $\bbA$ is the representation of $\phi_f$ under the standard basis $B=\{x_{\bbu}:\bbu\in \Fq^{n+d}\}$, i.e., $[\phi_{f}]_{B}^{B}=\bbA$.
\end{lemma}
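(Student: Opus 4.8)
\textbf{Proof plan for Lemma \ref{lem_representation_T^*T}.}

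The plan is to compare the matrix entries of $\bbA = \bbT^* \cdot \bbT$, which were computed in Lemma \ref{lem_entry_T^*T}, with the matrix entries of $[\phi_f]_B^B$, and check that they coincide. First I would recall that for a multiplication operator $\phi_f$ with $f = \sum_{\bbu} c_{\bbu} x_{\bbu}$, the action on a standard basis vector $x_{\bbv}$ is $\phi_f(x_{\bbv}) = f \cdot x_{\bbv} = \sum_{\bbu} c_{\bbu} x_{\bbu + \bbv} = \sum_{\bbw} c_{\bbw - \bbv} x_{\bbw}$, using the multiplication rule of the group algebra $\mathbb{C}[\Fq^{n+d}]$ (Definition \ref{def_Group_algebra_multiplication} and the displayed multiplication formula preceding it). Hence the $(\bbw, \bbv)$-entry of the representing matrix $[\phi_f]_B^B$ is exactly $c_{\bbw - \bbv}$.

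Next I would match this against Lemma \ref{lem_entry_T^*T}. The entry $\bbA_{\bbu, \bbv}$ equals $q^{dn}$ when $\bbu = \bbv$, i.e. when $\bbu - \bbv = \mathbf{0}$; it equals $q^{d(n-1)}$ when $\bbu|_{[n]} \neq \bbv|_{[n]}$, i.e. when $(\bbu - \bbv)|_{[n]} \neq \mathbf{0}$ (since projection onto $[n]$ is linear, $\bbu|_{[n]} \neq \bbv|_{[n]}$ is equivalent to $(\bbu-\bbv)|_{[n]} \neq \mathbf{0}$); and it equals $0$ in the remaining case, namely $\bbu \neq \bbv$ but $\bbu|_{[n]} = \bbv|_{[n]}$, i.e. $(\bbu - \bbv)|_{[n]} = \mathbf{0}$ while $\bbu - \bbv \neq \mathbf{0}$. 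Comparing with the definition of the coefficients $c_{\bbw}$ in the statement, we see that $\bbA_{\bbu, \bbv} = c_{\bbu - \bbv}$ for all $\bbu, \bbv \in \Fq^{n+d}$, which is precisely the entry of $[\phi_f]_B^B$ computed in the previous step. This establishes $[\phi_f]_B^B = \bbA$.

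There is essentially no obstacle here: the lemma is a bookkeeping identification that translates the combinatorial count of Lemma \ref{lem_entry_T^*T} into the language of the group algebra, setting up the application of Proposition \ref{prop_character_eigenvector} to diagonalize $\bbA$ via characters. The only point requiring a (trivial) remark is that the difference $\bbu - \bbv$ determines which of the three cases of Lemma \ref{lem_entry_T^*T} applies — that the three conditions ``$\bbu = \bbv$'', ``$\bbu|_{[n]} \neq \bbv|_{[n]}$'', ``$\bbu \neq \bbv$ and $\bbu|_{[n]} = \bbv|_{[n]}$'' are translation-invariant and hence depend only on $\bbu - \bbv$ — which is immediate from linearity of the coordinate projection.
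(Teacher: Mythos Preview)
Your proposal is correct and follows essentially the same approach as the paper: compute $\phi_f(x_{\bbv}) = \sum_{\bbu} c_{\bbu-\bbv} x_{\bbu}$ and then invoke Lemma~\ref{lem_entry_T^*T} to identify $c_{\bbu-\bbv}$ with $\bbA_{\bbu,\bbv}$. The paper's proof is more terse, simply writing the chain of equalities without spelling out the case-by-case translation-invariance check, but the content is identical.
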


\begin{proof}
Let $x_{\bbv}\in B$ be a standard basis vector, then by definition
\begin{align*}
    \phi_f(x_{\bbv})&=f\cdot x_{\bbv}=\sum_{\bbu\in\Fq^{n+d}}c_{\bbu-\bbv}x_{\bbu}\\   &=\sum_{\bbu\in\Fq^{n+d}}\bbA_{\bbu,\bbv}x_{\bbu},
\end{align*}
and  the result follows from Lemma \ref{lem_entry_T^*T}.
\end{proof}

By Proposition \ref{prop_character_eigenvector}, the characters of $\Fq^{n+d}$ form a basis of the eigenspaces of $\phi_f$. Next, we proceed by calculating their corresponding eigenvalues.

\begin{proposition}\label{prop_eigenvalue_T^*T}
    A character $\chi_{\bbv}\in \mathbb{C}[\Fq^{n+d}]$ of $\Fq^{n+d}$ is an eigenvector of $\phi_f$ with eigenvalue 
    \begin{equation*}
        \lambda=\left\{\begin{array}{cc}
        q^{(d+1)n}, & \mathrm{if}~\bbv=\mathbf{0};\\
        q^{dn}, & \mathrm{if}~\bbv|_{[n+1,n+d]}\neq\mathbf{0};\\
        0, & \mathrm{otherwise}. 
    \end{array}\right.
    \end{equation*}
\end{proposition}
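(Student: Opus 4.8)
The plan is to combine Proposition \ref{prop_character_eigenvector} with an explicit evaluation of the inner product $\langle f,\chi_{\bbv}\rangle$, using the block structure of the coefficient function $f$ from Lemma \ref{lem_representation_T^*T}. By Proposition \ref{prop_character_eigenvector}, $\chi_{\bbv}$ is already known to be an eigenvector of $\phi_f$, so the only task is to compute its eigenvalue $\lambda_{\bbv}=\langle f,\chi_{\bbv}\rangle = \sum_{\bbu\in\Fq^{n+d}}c_{\bbu}\,\overline{\chi_{\bbv}(\bbu)} = \sum_{\bbu\in\Fq^{n+d}}c_{\bbu}\,\chi_{-\bbv}(\bbu)$, where the $c_{\bbu}$ take only the three values $q^{(d+1)n-n}=q^{dn}$ (wait: $q^{dn}$ at $\bbu=\mathbf 0$), $q^{d(n-1)}$ (when $\bbu|_{[n]}\neq \mathbf 0$), and $0$ (when $\bbu|_{[n]}=\mathbf 0$ but $\bbu\neq \mathbf 0$).

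The key step is to rewrite the sum by splitting $\Fq^{n+d}$ according to the first $n$ coordinates. Concretely, I would group $\bbu=(\bbu|_{[n]},\bbu|_{[n+1,n+d]})$ and use the fact that $c_{\bbu}$ depends only on whether $\bbu|_{[n]}=\mathbf 0$. Writing $\chi_{\bbv}(\bbu)=\chi_{\bbv|_{[n]}}(\bbu|_{[n]})\cdot\chi_{\bbv|_{[n+1,n+d]}}(\bbu|_{[n+1,n+d]})$ via the multiplicativity of characters over the direct product $\Fq^n\times\Fq^d$, the sum factors. Over the block $\bbu|_{[n]}\neq \mathbf 0$ the coefficient is the constant $q^{d(n-1)}$, so that contribution is $q^{d(n-1)}\bigl(\sum_{\bbw\in\Fq^n\setminus\{\mathbf 0\}}\overline{\chi_{\bbv|_{[n]}}(\bbw)}\bigr)\bigl(\sum_{\bbz\in\Fq^d}\overline{\chi_{\bbv|_{[n+1,n+d]}}(\bbz)}\bigr)$; over $\bbu=\mathbf 0$ the single term contributes $q^{dn}$. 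Then I invoke the standard character-sum identity $\sum_{\bbz\in\Fq^d}\chi_{\bbc}(\bbz)$ equals $q^d$ if $\bbc=\mathbf 0$ and $0$ otherwise, and similarly $\sum_{\bbw\in\Fq^n}\chi_{\bbc}(\bbw)$ equals $q^n$ or $0$, so $\sum_{\bbw\neq\mathbf 0}\overline{\chi_{\bbv|_{[n]}}(\bbw)} = [\bbv|_{[n]}=\mathbf 0]\,q^n - 1$.

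From here it is a short case analysis on $\bbv$. If $\bbv|_{[n+1,n+d]}\neq\mathbf 0$, the $\Fq^d$-sum vanishes, killing the block contribution and leaving only the $\bbu=\mathbf 0$ term, giving $\lambda=q^{dn}$. If $\bbv|_{[n+1,n+d]}=\mathbf 0$, the $\Fq^d$-sum equals $q^d$; if additionally $\bbv|_{[n]}\neq\mathbf 0$ then the $\Fq^n\setminus\{\mathbf 0\}$-sum equals $-1$, so $\lambda = q^{dn} + q^{d(n-1)}\cdot(-1)\cdot q^d = q^{dn}-q^{dn}=0$ — matching the ``otherwise'' case $\bbv\neq\mathbf 0$ with $\bbv|_{[n+1,n+d]}=\mathbf 0$; and if $\bbv=\mathbf 0$ entirely then the $\Fq^n\setminus\{\mathbf 0\}$-sum equals $q^n-1$, so $\lambda = q^{dn}+q^{d(n-1)}(q^n-1)q^d = q^{dn}+q^{dn+d}\cdot q^{d(n-1)}\cdots$; carefully, $q^{d(n-1)}\cdot q^d = q^{dn}$, so $\lambda = q^{dn} + q^{dn}(q^n-1) = q^{(d+1)n}$, as claimed.

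The main obstacle, such as it is, is purely bookkeeping: keeping the exponents straight across the two blocks and correctly handling the ``$\bbu|_{[n]}=\mathbf 0$ but $\bbu\neq\mathbf 0$'' sliver where $c_{\bbu}=0$ (this is exactly why a naive factorization into $\sum_{\bbu|_{[n]}}$ times $\sum_{\bbu|_{[n+1,n+d]}}$ does not directly apply, and one must first subtract off the $\bbu|_{[n]}=\mathbf 0$ locus and then add back the single point $\bbu=\mathbf 0$ with its true coefficient $q^{dn}$). There is no real analytic difficulty; the argument is a direct computation once the sum is organized along the $\Fq^n\times\Fq^d$ splitting and the orthogonality of characters is applied.
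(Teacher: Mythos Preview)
Your proposal is correct and follows essentially the same route as the paper: invoke Proposition~\ref{prop_character_eigenvector} to reduce to computing $\langle f,\chi_{\bbv}\rangle$, isolate the $\bbu=\mathbf{0}$ term contributing $q^{dn}$, factor the remaining sum over $\{\bbu:\bbu|_{[n]}\neq\mathbf{0}\}$ using the product decomposition $\chi_{\bbv}(\bbu)=\chi_{\bbv|_{[n]}}(\bbu|_{[n]})\cdot\chi_{\bbv|_{[n+1,n+d]}}(\bbu|_{[n+1,n+d]})$, and then apply orthogonality of characters in a short case analysis on $\bbv|_{[n+1,n+d]}$ and $\bbv|_{[n]}$. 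The only cosmetic difference is that the paper computes $\bar\lambda$ and works with $\chi_{\bbv}$ rather than $\overline{\chi_{\bbv}}$, which is immaterial since the coefficients $c_{\bbu}$ are real and even (so $\lambda\in\mathbb{R}$).
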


\begin{proof}
    By Proposition \ref{prop_character_eigenvector}, $\lambda=\langle f,\chi_{\bbv} \rangle$. Then, by the definition of $f$, we have
    \begin{align}
        \bar{\lambda}& 
        =\sum_{\bbu\in \Fq^{n+d}}\overline{f(\bbu)}\chi_{\bbv}(\bbu)=\sum_{\bbu\in \Fq^{n+d}}c_{\bbu}\chi_{\bbv}(\bbu) \nonumber\\
        &=q^{dn}+q^{d(n-1)}\sum_{\bbu\in \Fq^{n+d}\atop \bbu|_{[n]}\neq \mathbf{0}}e\left(\mathrm{Tr}(\langle \bbu,\bbv \rangle)\right).\label{eq1_prop_eigenvalue_T^*T}
    \end{align}
    If $\bbv=\mathbf{0}$, then $e(\mathrm{Tr}(\langle \bbu,\bbv \rangle))=1$ for any $\bbu\in \Fq^{n+d}$ with $\bbu|_{[n]}\neq \mathbf{0}$ and (\ref{eq1_prop_eigenvalue_T^*T}) becomes
    \begin{align*}
        q^{dn}+q^{d(n-1)}(q^{n+d}-q^d)=q^{(d+1)n}.
    \end{align*}
    Notice that $e(\mathrm{Tr}(\langle \bbu,\bbv \rangle))=e\left(\mathrm{Tr}\left(\sum_{i\in [n]}u_iv_i\right)\right) e\left(\mathrm{Tr}\left(\sum_{i=n+1}^{n+d}u_iv_i\right)\right)$. Thus, if $\bbv|_{[n+1,n+d]}\neq \mathbf{0}$, then (\ref{eq1_prop_eigenvalue_T^*T}) becomes
    \begin{align*}
       & q^{dn}+q^{d(n-1)}\sum_{\bbu\in \Fq^{n+d}\atop \bbu|_{[n]}\neq \mathbf{0}} e\left(\mathrm{Tr}\left(\sum_{i\in [n]}u_iv_i\right)\right) e\left(\mathrm{Tr}\left(\sum_{i=n+1}^{n+d}u_iv_i\right)\right)\\
     = & q^{dn}+q^{d(n-1)}\sum_{\bbu|_{[n]}\in \Fq^{n}\setminus\{\mathbf{0}\}} e\left(\mathrm{Tr}\left(\sum_{i\in [n]}u_iv_i\right)\right) \sum_{\bbu|_{[n+1,n+d]}\in \Fq^{d}}e\left(\mathrm{Tr}\left(\sum_{i=n+1}^{n+d}u_iv_i\right)\right)\\
     = & q^{dn}+q^{d(n-1)}\sum_{\bbu|_{[n]}\in \Fq^{n}\setminus\{\mathbf{0}\}} e\left(\mathrm{Tr}\left(\sum_{i\in [n]}u_iv_i\right)\right) \cdot 0=q^{dn}.
    \end{align*}
    If $\bbv|_{[n]}\neq \mathbf{0}$ and $\bbv|_{[n+1,n+d]}= \mathbf{0}$, then (\ref{eq1_prop_eigenvalue_T^*T}) becomes
    \begin{align*}
       & q^{dn}+q^{d(n-1)}\sum_{\bbu\in \Fq^{n+d}\atop \bbu|_{[n]}\neq \mathbf{0}} e\left(\mathrm{Tr}\left(\sum_{i\in [n]}u_iv_i\right)\right) e\left(\mathrm{Tr}\left(\sum_{i=n+1}^{n+d}u_iv_i\right)\right)\\
     = & q^{dn}+q^{d(n-1)}\sum_{\bbu|_{[n]}\in \Fq^{n}\setminus\{\mathbf{0}\}} e\left(\mathrm{Tr}\left(\sum_{i\in [n]}u_iv_i\right)\right) \sum_{\bbu|_{[n+1,n+d]}\in \Fq^{d}} 1\\
     = & q^{dn}+q^{dn}\sum_{\bbu|_{[n]}\in \Fq^{n}\setminus\{\mathbf{0}\}} e\left(\mathrm{Tr}\left(\sum_{i\in [n]}u_iv_i\right)\right)=q^{dn}-q^{dn}=0.
    \end{align*}
    This concludes the proof.
\end{proof}

The following theorem follows easily from Proposition \ref{prop_eigenvalue_T^*T}.

\begin{theorem}\label{thm_eigenvalue_T^*T}
    The characters of $\Fq^{n+d}$ are eigenvectors of the operator $\phi_f$ with the eigenvalues listed in Table \ref{Table_1}.
    \begin{table}[h!]
    \caption{Eigenvalues and eigenvectors of $\phi_f$}\label{Table_1}
    \centering
    \begin{tabular}{|c|c|c|}
    \hline
    Character $\chi_{\mathbf{v}}$ & No. of  characters & Eigenvalue \\ 
    \hline
    $\mathbf{v}=\mathbf{0}$ & $1$ & $q^{(d+1)n}$ \\ \hline
    $\mathbf{v}|_{[n+1,n+d]}\neq\mathbf{0}$ & $q^{n+d}-q^n$ & $q^{dn}$ \\ \hline
    $\mathbf{v}\neq\mathbf{0}$, $\mathbf{v}|_{[n+1,n+d]}=\mathbf{0}$ & $q^n-1$ & $0$ \\ \hline
    \end{tabular}
\end{table}
\end{theorem}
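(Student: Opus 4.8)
The plan is to derive Theorem~\ref{thm_eigenvalue_T^*T} as an immediate bookkeeping consequence of Proposition~\ref{prop_eigenvalue_T^*T} together with Proposition~\ref{prop_character_eigenvector} and Lemma~\ref{lem_representation_T^*T}. Recall that by Lemma~\ref{lem_representation_T^*T} the matrix $\bbA = \bbT^{*}\cdot\bbT$ is exactly the representation of the multiplication operator $\phi_f$ in the standard basis of $\mathbb{C}[\Fq^{n+d}]$, so the spectral data of $\phi_f$ is literally the spectral data of $\bbA$. By Proposition~\ref{prop_character_eigenvector}, the $q^{n+d}$ characters $\{\chi_{\bbv}\}_{\bbv\in\Fq^{n+d}}$ are all eigenvectors of $\phi_f$; since they form a basis of $\mathbb{C}[\Fq^{n+d}]$ (they are orthogonal, hence linearly independent, and there are $q^{n+d}=\dim\mathbb{C}[\Fq^{n+d}]$ of them), they in fact diagonalize $\phi_f$. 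So nothing is missed: every eigenvalue of $\bbA$ is realized by some $\chi_{\bbv}$, and the multiplicity of a given eigenvalue is the number of $\bbv$'s producing it.

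The only remaining task is to partition $\Fq^{n+d}$ according to the three cases of Proposition~\ref{prop_eigenvalue_T^*T} and count. First I would note that there is a unique $\bbv=\mathbf{0}$, giving eigenvalue $q^{(d+1)n}$ with multiplicity $1$. Next, the set $\{\bbv : \bbv|_{[n+1,n+d]}\neq\mathbf{0}\}$ has size $q^{n+d}-q^{n}$ (total vectors minus those whose last $d$ coordinates vanish, of which there are $q^{n}$), and each such $\bbv$ gives eigenvalue $q^{dn}$. Finally, the set $\{\bbv\neq\mathbf{0} : \bbv|_{[n+1,n+d]}=\mathbf{0}\}$ has size $q^{n}-1$ and each such $\bbv$ gives eigenvalue $0$. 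A quick sanity check confirms $1 + (q^{n+d}-q^n) + (q^n-1) = q^{n+d}$, as it must. Assembling these three lines into Table~\ref{Table_1} completes the argument.

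There is no real obstacle here; the statement is purely a reorganization of Proposition~\ref{prop_eigenvalue_T^*T} into tabular form, with the counting being elementary. The one point worth stating explicitly (and which the proof should mention for rigor) is that the characters span the whole space, so the listed eigenvalues and multiplicities are complete and not merely a partial list; this is what licenses reading off, for instance, that the zero eigenspace of $\bbA$ has dimension exactly $q^n-1$, a fact that will be used later when projecting the indicator vectors $\mathbf{1}_{\cP}$ and $\mathbf{1}_{\cV}$ onto the kernel. The proof is therefore a single short paragraph: invoke Proposition~\ref{prop_eigenvalue_T^*T}, invoke the basis property of characters, count the three classes, and display the table.
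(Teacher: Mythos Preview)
Your proposal is correct and matches the paper's approach exactly: the paper simply states that Theorem~\ref{thm_eigenvalue_T^*T} ``follows easily from Proposition~\ref{prop_eigenvalue_T^*T}'' without giving any further argument, so your explicit counting of the three classes and your remark on the basis property of the characters are a faithful (and slightly more detailed) rendering of what the paper leaves implicit.
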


\begin{corollary}\label{coro_right_singular_T}
    The normalized characters of $\Fq^{n+d}$ serve as right-singular vectors of the matrix $\bbT$, each associated with a singular value equals to the square root of its corresponding eigenvalue listed in Theorem \ref{thm_eigenvalue_T^*T}.
\end{corollary}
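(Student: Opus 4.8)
The plan is to read off the statement from Lemma~\ref{lem_representation_T^*T}, Theorem~\ref{thm_eigenvalue_T^*T}, and the standard facts about the SVD recalled above. First I would recall from (\ref{eq_T^*T}) that $\bbT^{*}\cdot\bbT=\bbV\cdot\bbD^{2}\cdot\bbV^{*}$, so the right-singular vectors of $\bbT$ (the columns of $\bbV$) are precisely an orthonormal basis of $\mathbb{C}^{q^{n+d}}$ consisting of eigenvectors of the positive semi-definite matrix $\bbA=\bbT^{*}\cdot\bbT$, and the singular value attached to such a vector is the square root of its $\bbA$-eigenvalue. It therefore suffices to exhibit one such orthonormal eigenbasis and to compute its eigenvalues.

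Next I would invoke Lemma~\ref{lem_representation_T^*T}, which states that $\bbA=[\phi_{f}]_{B}^{B}$, i.e.\ $\bbA$ represents the group-algebra multiplication operator $\phi_{f}$ on $\mathbb{C}[\Fq^{n+d}]$ in the standard basis $B$. Hence, identifying a column vector indexed by $\Fq^{n+d}$ with the corresponding element of $\mathbb{C}[\Fq^{n+d}]$, applying $\bbA$ agrees with applying $\phi_{f}$. By Proposition~\ref{prop_character_eigenvector} every character $\chi_{\bbv}$ of $\Fq^{n+d}$ is an eigenvector of $\phi_{f}$, and Theorem~\ref{thm_eigenvalue_T^*T} (Table~\ref{Table_1}) records its eigenvalue. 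Since the $q^{n+d}$ characters are pairwise orthogonal under $\langle f,g\rangle=\sum_{\bbu}f(\bbu)\overline{g(\bbu)}$ and each has norm $q^{(n+d)/2}$, the normalized characters $q^{-(n+d)/2}\chi_{\bbv}$, $\bbv\in\Fq^{n+d}$, form an orthonormal eigenbasis of $\bbA$.

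Finally I would assemble the SVD: letting $\bbV$ be the unitary matrix whose columns are these normalized characters and $\bbD$ the diagonal matrix carrying the square roots of the matching eigenvalues from Theorem~\ref{thm_eigenvalue_T^*T} yields a factorization of the form in (\ref{eq_T^*T}); equivalently, each normalized character is a right-singular vector of $\bbT$ whose singular value is the square root of its eigenvalue. No genuine obstacle arises --- the corollary is a direct bookkeeping consequence of the earlier results --- and the only mild subtlety is the treatment of the eigenvalue $0$: as noted right after (\ref{eq_T^*T}), zero eigenvalues of $\bbT^{*}\cdot\bbT$ correspond to zero singular values of $\bbT$, so the relation ``eigenvalue $=$ (singular value)$^{2}$'' holds uniformly over all three rows of Table~\ref{Table_1}.
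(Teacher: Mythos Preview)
Your proposal is correct and follows exactly the approach intended by the paper: the corollary is stated without proof, as an immediate consequence of Theorem~\ref{thm_eigenvalue_T^*T}, Lemma~\ref{lem_representation_T^*T}, and the SVD facts recalled in Section~\ref{sec: pre}, and you have simply spelled out this bookkeeping in detail.
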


Building on Theorem \ref{thm_eigenvalue_T^*T}, we next establish a bound on the squared norm of the projection of an indicator vector onto the eigenspaces. 

\begin{lemma}\label{lem_proj_on_V}
    Let $\mathbf{1}_{\cP}\in \mathbb{C}^{q^{n+d}}$ be the indicator vector of a set of points $\cP\subseteq \Fq^{n+d}$. Then, the squared norm of the projection of $\mathbf{1}_{\cP}$ onto the eigenspaces with eigenvalues $q^{(d+1)n}$ and $q^{dn}$ are $\frac{|\cP|^2}{q^{n+d}}$ and at most $(1-\frac{1}{q^d})|\cP|$, respectively.
\end{lemma}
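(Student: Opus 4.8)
The plan is to exploit the orthogonal decomposition of $\mathbb{C}^{q^{n+d}}$ into the three eigenspaces of $\phi_f$ listed in Theorem~\ref{thm_eigenvalue_T^*T}, and to evaluate the two relevant projections of $\mathbf{1}_{\cP}$ by describing these eigenspaces explicitly. Write $W_{+}=\mathrm{span}\{\chi_{\mathbf 0}\}$ for the one-dimensional eigenspace with eigenvalue $q^{(d+1)n}$, $W_{1}=\mathrm{span}\{\chi_{\bbv}:\bbv|_{[n+1,n+d]}\neq\mathbf 0\}$ for the eigenspace with eigenvalue $q^{dn}$, and $W_{0}=\mathrm{span}\{\chi_{\bbv}:\bbv\neq\mathbf 0,\ \bbv|_{[n+1,n+d]}=\mathbf 0\}$ for the zero eigenspace. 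Since distinct characters are orthogonal, $\mathbb{C}^{q^{n+d}}=W_{+}\oplus W_{1}\oplus W_{0}$ is an orthogonal decomposition, and therefore $\|\mathbf{1}_{\cP}\|^{2}=|\cP|$ equals the sum of the squared norms of the three orthogonal projections of $\mathbf{1}_{\cP}$.

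The projection onto $W_{+}$ is immediate: the vector $\chi_{\mathbf 0}$ is the all-ones vector of length $q^{n+d}$, so $P_{W_{+}}\mathbf{1}_{\cP}=\frac{\langle \mathbf{1}_{\cP},\chi_{\mathbf 0}\rangle}{\langle \chi_{\mathbf 0},\chi_{\mathbf 0}\rangle}\chi_{\mathbf 0}=\frac{|\cP|}{q^{n+d}}\chi_{\mathbf 0}$, and hence $\|P_{W_{+}}\mathbf{1}_{\cP}\|^{2}=\frac{|\cP|^{2}}{q^{2(n+d)}}\cdot q^{n+d}=\frac{|\cP|^{2}}{q^{n+d}}$, which is the first claim.

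For the eigenvalue $q^{dn}$, the first step is to identify the orthogonal complement $W_{1}^{\perp}=W_{0}\oplus W_{+}$ concretely: any character $\chi_{\bbv}$ with $\bbv|_{[n+1,n+d]}=\mathbf 0$ satisfies $\chi_{\bbv}(\bbu)=e\left(\mathrm{Tr}\left(\sum_{i=1}^{n}v_{i}u_{i}\right)\right)$, which depends only on $\bbu|_{[n]}$, so $W_{0}\oplus W_{+}$ is contained in the space $U$ of all functions $\Fq^{n+d}\to\mathbb{C}$ that are constant on each fiber $\{\bbw\in\Fq^{n+d}:\bbw|_{[n]}=\bbt\}$, $\bbt\in\Fq^{n}$; comparing dimensions, $(q^{n}-1)+1=q^{n}=\dim U$, gives $W_{0}\oplus W_{+}=U$. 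The orthogonal projection onto $U$ is the fiber-averaging operator (the fiber-indicator vectors form an orthogonal basis of $U$), so writing $m_{\bbt}\triangleq|\{\bbp\in\cP:\bbp|_{[n]}=\bbt\}|$ the projection $P_{U}\mathbf{1}_{\cP}$ takes the constant value $m_{\bbt}/q^{d}$ on the fiber over $\bbt$. Consequently
$$\|P_{U}\mathbf{1}_{\cP}\|^{2}=\sum_{\bbt\in\Fq^{n}}q^{d}\cdot\frac{m_{\bbt}^{2}}{q^{2d}}=\frac{1}{q^{d}}\sum_{\bbt\in\Fq^{n}}m_{\bbt}^{2}\ \ge\ \frac{1}{q^{d}}\sum_{\bbt\in\Fq^{n}}m_{\bbt}=\frac{|\cP|}{q^{d}},$$
where the inequality is the elementary fact that each $m_{\bbt}$ is a non-negative integer and hence $m_{\bbt}^{2}\ge m_{\bbt}$. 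Since $W_{1}=U^{\perp}$, this yields $\|P_{W_{1}}\mathbf{1}_{\cP}\|^{2}=\|\mathbf{1}_{\cP}\|^{2}-\|P_{U}\mathbf{1}_{\cP}\|^{2}\le|\cP|-\frac{|\cP|}{q^{d}}=\left(1-\frac{1}{q^{d}}\right)|\cP|$, the second claim.

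I do not expect a genuine obstacle here: the only step requiring care is recognizing $W_{1}^{\perp}$ as the space of fiber-constant functions, which makes the projection $P_{U}\mathbf{1}_{\cP}$ fully explicit; after that, the entire gain over the trivial bound $\|P_{W_{1}}\mathbf{1}_{\cP}\|^{2}\le|\cP|$ comes from the integrality inequality $m_{\bbt}^{2}\ge m_{\bbt}$, which is precisely the "substantial projection onto the zero eigenspace" phenomenon highlighted in the introduction.
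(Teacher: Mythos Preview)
Your proof is correct and arrives at the same conclusion, but the route differs from the paper's. The paper expands the character sum
\[
\sum_{\bbv:\,\bbv|_{[n+1,n+d]}\neq\mathbf 0}\abs{\langle \mathbf 1_{\cP},\chi_{\bbv}/q^{(n+d)/2}\rangle}^{2}
\]
directly into a double sum over $\bbu,\tilde\bbu\in\cP$, isolates the diagonal contribution $(1-1/q^{d})|\cP|$, and then evaluates the off-diagonal character sum $\sum_{\bbv}e\!\left(\mathrm{Tr}\langle\bbv,\bbu-\tilde\bbu\rangle\right)$ case by case, showing it equals $0$ when $\bbu|_{[n]}\neq\tilde\bbu|_{[n]}$ and $-q^{n}$ otherwise; non-positivity of the remainder follows. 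You instead identify $W_{1}^{\perp}=W_{+}\oplus W_{0}$ with the space of functions constant along the fibers $\{\bbw:\bbw|_{[n]}=\bbt\}$, compute the averaging projection explicitly as $m_{\bbt}/q^{d}$, and invoke the integrality inequality $m_{\bbt}^{2}\ge m_{\bbt}$. The two arguments are equivalent at the level of the final expression---the paper's off-diagonal term is precisely $-q^{-d}\sum_{\bbt}(m_{\bbt}^{2}-m_{\bbt})$---but your version is more structural: it makes transparent \emph{why} the bound holds (each fiber contributes at least as much to $\|P_{U}\mathbf 1_{\cP}\|^{2}$ as its point count divided by $q^{d}$) and avoids any explicit character-sum manipulation. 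The paper's approach, on the other hand, generalizes more mechanically to settings where the complementary eigenspace lacks such a clean combinatorial description, as in Lemma~\ref{lem_proj_on_U}.
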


\begin{proof}
    The squared norm of the projection of $\mathbf{1}_{\cP}$ onto the normalized trivial character is
    $$\abs{\langle \mathbf{1}_{\cP},\frac{\chi_{\mathbf{0}}}{q^{(n+d)/2}} \rangle}^2=\frac{|\cP|^2}{q^{n+d}}.$$

    Next, we bound the squared norm of the projection of $\mathbf{1}_{\cP}$ onto the eigenspace corresponding to the eigenvalue $q^{dn}$.
    \begin{align*}
       \sum_{\bbv\in\Fq^{n+d} \atop \bbv|_{[n+1,n+d]}\neq \mathbf{0}}\abs{\langle \mathbf{1}_{\cP},\frac{\chi_{\bbv}}{q^{(n+d)/2}}\rangle}^2 & =q^{-(n+d)}\sum_{\bbv\in\Fq^{n+d} \atop \bbv|_{[n+1,n+d]}\neq \mathbf{0}}\sum_{\bbu\in \cP}\chi_{\bbv}(\bbu)\sum_{\tilde{\bbu}\in \cP}\overline{\chi_{\bbv}(\tilde{\bbu})}\\
     & = q^{-(n+d)}\sum_{\bbv\in\Fq^{n+d} \atop \bbv|_{[n+1,n+d]}\neq \mathbf{0}}\sum_{\bbu,\tilde{\bbu}\in \cP}\chi_{\bbv}(\bbu-\tilde{\bbu})\\
     & = \frac{q^{n+d}-q^n}{q^{n+d}}|\cP|+q^{-(n+d)}\sum_{\bbv\in\Fq^{n+d} \atop \bbv|_{[n+1,n+d]}\neq \mathbf{0}}\sum_{\bbu\neq \tilde{\bbu}\in \cP}\chi_{\bbv}(\bbu-\tilde{\bbu})\\
     & = (1-\frac{1}{q^{d}})|\cP|+q^{-(n+d)}\sum_{\bbu\neq \tilde{\bbu}\in \cP}\sum_{\bbv\in\Fq^{n+d} \atop \bbv|_{[n+1,n+d]}\neq \mathbf{0}}e\left(\mathrm{Tr}\left(\langle \bbv,\bbu-\tilde{\bbu}\rangle\right)\right).
    \end{align*}
    Note that for any fixed $\bbu\neq \tilde{\bbu}\in \cP$, 
    \begin{align*}
        &\sum_{\bbv\in\Fq^{n+d} \atop \bbv|_{[n+1,n+d]}\neq \mathbf{0}}e\left(\mathrm{Tr}\left(\langle \bbv,\bbu-\tilde{\bbu}\rangle\right)\right)\\
        = & \sum_{\bbv_1\in\Fq^{d}\setminus\{\mathbf{0}\}}e\left(\mathrm{Tr}\left(\langle \bbv_1,(\bbu-\tilde{\bbu})|_{[n+1,n+d]}\rangle\right)\right)\sum_{\bbv_2\in\Fq^{n}}e\left(\mathrm{Tr}\left(\langle \bbv_2,(\bbu-\tilde{\bbu})|_{[n]}\rangle\right)\right),
    \end{align*}
    where $\bbv_1=\bbv|_{[n+1,n+d]}$ and $\bbv_2=\bbv|_{[n]}$.
    Thus, we have 
    \begin{equation*}
        \sum_{\bbv\in\Fq^{n+d} \atop \bbv|_{[n+1,n+d]}\neq \mathbf{0}}e\left(\mathrm{Tr}\left(\langle \bbv,\bbu-\tilde{\bbu}\rangle\right)\right)=\left\{\begin{array}{cc}
        0, & \mathrm{if}~\bbu|_{[n]}\neq\tilde{\bbu}|_{[n]};\\
        -q^{n}, & \mathrm{if}~\bbu\neq \tilde{\bbu}~\mathrm{but}~\bbu|_{[n]}=\tilde{\bbu}|_{[n]}.
    \end{array}\right.
    \end{equation*}
    This implies  that $\sum_{\bbu\neq \tilde{\bbu}\in \cP}\sum_{\bbv\in\Fq^{n+d} \atop \bbv|_{[n+1,n+d]}\neq \mathbf{0}}e\left(\mathrm{Tr}\left(\langle \bbv,\bbu-\tilde{\bbu}\rangle\right)\right)$ is a non-positive integer and this concludes the result.
\end{proof}

\subsection{Left singular vectors of the incidence matrix}\label{sec: left singular vectors}

In this section we take a similar approach to Section \ref{sec: right singular vectors} and provide a complete description of the left singular vectors of $\bbT$ by considering the matrix $\bbA=\bbT\cdot \bbT^{*}$.

By (\ref{eq_incidence_matrix}) and (\ref{eq_TT^*}), we know that $\bbA$ is a square matrix of order $q^{d(n+1)}$ and the rows and columns of $\bbA$ are indexed by the varieties $V_{\bba_1,\ldots,\bba_d}$ defined in (\ref{eq_variety_1}). Thus, by the properties of the incidence bipartite graph $G$, one can easily verify that for any two varieties $V_{\bba_1,\ldots,\bba_d},V_{\tilde{\bba}_1,\ldots,\tilde{\bba}_d}$,
\begin{align}
\bbA_{V_{\bba_1,\ldots,\bba_d},V_{\tilde{\bba}_1,\ldots,\tilde{\bba}_d}}&=|\{\bbu\in \Fq^{n+d}: \bbu\in V_{\bba_1,\ldots,\bba_d}\cap V_{\tilde{\bba}_1,\ldots,\tilde{\bba}_d}\}| \nonumber \\
&=\abs{\left\{\bbu\in \Fq^{n+d}: \begin{array}{c}
     u_{n+i}=h_i(\bbu|_{[n]})+f_{\bba_i}(\bbu|_{[n]})  \\
     ~~~~~~~~=h_i(\bbu|_{[n]})+f_{\tilde{\bba}_i}(\bbu|_{[n]}),\\
     \forall~1\leq i\leq d
\end{array}\right\}} \nonumber\\
&=\abs{\left\{\tilde{\bbu}\in \Fq^{n}: f_{\bba_i}(\tilde{\bbu})=f_{\tilde{\bba}_i}(\tilde{\bbu}),~\forall~1\leq i\leq d\right\}}. \label{eq_entry_TT^*}
\end{align}

For $d$ fixed vectors $\bbb_1,\ldots,\bbb_d$ in $(\mathbb{Z}^{+})^n$ with $\mathrm{gcd}(b_{i,j},q-1)=1$ for all $1\leq i\leq d$ and $1\leq j\leq n$, by the definition of $f_{\bba_i}(\bbx)$ in (\ref{eq_variety_0}), the set $\{f_{\bba_i}:\bba_i\in \Fq^{n+1}\}$ forms an additive subgroup of $\Fq[x_1,\ldots,x_n]$ that is isomorphic to $\Fq^{n+1}$ for each $1\leq i\leq d$. That is, the group 
$$G_{\bbb_1,\ldots,\bbb_d}\triangleq \{(f_{\bba_1},f_{\bba_2},\ldots,f_{\bba_d}): \bba_i\in \Fq^{n+1},~1\leq i\leq d \}$$
is isomorphic to $\Fq^{d(n+1)}$ (under the addition in $(\Fq[x_1,\ldots,x_n])^{d}$). Moreover, their corresponding group algebra $\mathbb{C}[G_{\bbb_1,\ldots,\bbb_d}]$ and $\mathbb{C}[\Fq^{d(n+1)}]$ are also isomorphic. Consequently, in the sequel, we will refer to them interchangeably as the same mathematical object.

The following lemma shows that the matrix $\bbA$ can be viewed as a representation of some multiplication operator under the standard basis of $\mathbb{C}[G_{\bbb_1,\ldots,\bbb_d}]$.

\begin{lemma}\label{lem_representation_TT^*}
    Let $z=\sum_{(f_1,\ldots,f_d)\in G_{\bbb_1,\ldots,\bbb_d}}c_{(f_1,\ldots,f_d)}x_{(f_1,\ldots,f_d)}\in \mathbb{C}[G_{\bbb_1,\ldots,\bbb_d}]$, where
    \begin{equation*}
        c_{(f_1,\ldots,f_d)}=\abs{\{(u_1,\ldots,u_n)\in \Fq^n: f_i(u_1,\ldots,u_n)=0,~\forall~1\leq i\leq d\}}.
    \end{equation*}
    Then, $\bbA$ is the representation of $\phi_z$ under the standard basis $B=\{x_{(f_1,\ldots,f_d)}:(f_1,\ldots,f_d)\in G_{\bbb_1,\ldots,\bbb_d}\}$, i.e., $[\phi_{z}]_{B}^{B}=\bbA$.
\end{lemma}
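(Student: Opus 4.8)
The plan is to mirror the argument given for Lemma \ref{lem_representation_T^*T} in the setting of the group algebra $\mathbb{C}[G_{\bbb_1,\ldots,\bbb_d}]$. First I would fix an arbitrary standard basis vector $x_{(g_1,\ldots,g_d)} \in B$ and compute $\phi_z(x_{(g_1,\ldots,g_d)})$ directly from the definition of the multiplication operator in Definition \ref{def_Group_algebra_multiplication}. Using the convolution formula for multiplication in the group algebra, this gives
$$
\phi_z(x_{(g_1,\ldots,g_d)}) = z \cdot x_{(g_1,\ldots,g_d)} = \sum_{(f_1,\ldots,f_d)\in G_{\bbb_1,\ldots,\bbb_d}} c_{(f_1,\ldots,f_d)-(g_1,\ldots,g_d)}\, x_{(f_1,\ldots,f_d)},
$$
where the subtraction is taken in the group $G_{\bbb_1,\ldots,\bbb_d}$ (i.e., coordinatewise in $(\Fq[x_1,\ldots,x_n])^d$). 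So the $(f_1,\ldots,f_d)$-coefficient of $\phi_z(x_{(g_1,\ldots,g_d)})$ equals $c_{(f_1-g_1,\ldots,f_d-g_d)}$, which means the matrix $[\phi_z]_B^B$ has $((f_1,\ldots,f_d),(g_1,\ldots,g_d))$-entry equal to $c_{(f_1-g_1,\ldots,f_d-g_d)}$.

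Next I would identify the indices: by construction of $G_{\bbb_1,\ldots,\bbb_d}$, each tuple $(f_1,\ldots,f_d)$ corresponds to a unique $d$-tuple of vectors $(\bba_1,\ldots,\bba_d)\in (\Fq^{n+1})^d$ via $f_i = f_{\bba_i}$, and by Lemma \ref{lem_number of varieties} this in turn corresponds bijectively to the variety $V_{\bba_1,\ldots,\bba_d}$ indexing the rows and columns of $\bbA$. Under this identification, if $(g_1,\ldots,g_d) = (f_{\tilde{\bba}_1},\ldots,f_{\tilde{\bba}_d})$, then $f_i - g_i = f_{\bba_i} - f_{\tilde{\bba}_i}$, and by the definition of $c_{(\cdot)}$,
$$
c_{(f_1-g_1,\ldots,f_d-g_d)} = \abs{\{\tilde{\bbu}\in \Fq^n : (f_{\bba_i}-f_{\tilde{\bba}_i})(\tilde{\bbu}) = 0,~\forall~1\leq i\leq d\}} = \abs{\{\tilde{\bbu}\in \Fq^n : f_{\bba_i}(\tilde{\bbu}) = f_{\tilde{\bba}_i}(\tilde{\bbu}),~\forall~1\leq i\leq d\}}.
$$
Comparing with \eqref{eq_entry_TT^*}, this is exactly $\bbA_{V_{\bba_1,\ldots,\bba_d},V_{\tilde{\bba}_1,\ldots,\tilde{\bba}_d}}$, which establishes $[\phi_z]_B^B = \bbA$.

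The only genuine subtlety — and the step I'd be most careful about — is checking that the coefficient function $c_{(\cdot)}$ is well-defined as a function on the group $G_{\bbb_1,\ldots,\bbb_d}$ and is consistent with the indexing used for $\bbA$: one must verify that the map $(\bba_1,\ldots,\bba_d) \mapsto (f_{\bba_1},\ldots,f_{\bba_d})$ is a group isomorphism onto $G_{\bbb_1,\ldots,\bbb_d}$ (which follows from $\gcd(b_{i,j},q-1)=1$ forcing the $x_j^{b_{i,j}}$ to be genuine permutation monomials, so distinct $\bba_i$ give distinct $f_{\bba_i}$, as already used in the proof of Lemma \ref{lem_number of varieties}), and that $G_{\bbb_1,\ldots,\bbb_d}$ being closed under the coordinatewise addition in $(\Fq[x_1,\ldots,x_n])^d$ means $f_{\bba_i} - f_{\tilde{\bba}_i} = f_{\bba_i - \tilde{\bba}_i}$ lies again in the $i$-th component group, so $c_{(f_1-g_1,\ldots,f_d-g_d)}$ makes sense. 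Everything else is a routine unwinding of the convolution formula and the definition of a matrix representation with respect to a basis, exactly parallel to Lemma \ref{lem_representation_T^*T}.
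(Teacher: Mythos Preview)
Your proposal is correct and follows essentially the same approach as the paper's proof: compute $\phi_z$ on a standard basis vector via the convolution formula, then identify the resulting coefficient $c_{(f_1-g_1,\ldots,f_d-g_d)}$ with the entry of $\bbA$ using \eqref{eq_entry_TT^*}. The extra care you take about the well-definedness of $c_{(\cdot)}$ and the group isomorphism $(\bba_1,\ldots,\bba_d)\mapsto(f_{\bba_1},\ldots,f_{\bba_d})$ is sound and only makes explicit what the paper leaves implicit.
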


\begin{proof}
Let $x_{(f_1,\ldots,f_d)}\in B$ be a standard basis vector, then by definition
$$
    \phi_z(x_{(f_1,\ldots,f_d)})=z\cdot x_{(f_1,\ldots,f_d)}
    =\sum_{(\tilde{f}_1,\ldots,\tilde{f}_d)\in G_{\bbb_1,\ldots,\bbb_d}}c_{(\tilde{f}_1-f_1,\ldots,\tilde{f}_d-f_d)}x_{(\tilde{f}_1,\ldots,\tilde{f}_d)}.
$$
Since for any $(f_1,\ldots,f_d),(\tilde{f}_1,\ldots,\tilde{f}_d)\in G_{\bbb_1,\ldots,\bbb_d}$, we can write $f_i=f_{\bba_i}$ and $\tilde{f}_i=f_{\tilde{\bba}_i}$ for some $\bba_i,\tilde{\bba}_i\in \Fq^{n+1}$. Then, by (\ref{eq_entry_TT^*}) and the definition of variety $V_{\bba_1,\ldots,\bba_d}$ in (\ref{eq_variety_1}),
we have
\begin{align*}
    &c_{(\tilde{f}_1-f_1,\ldots,\tilde{f}_d-f_d)}\\
   = &\abs{\{(u_1,\ldots,u_n)\in \Fq^n: (\tilde{f}_i-f_i)(u_1,\ldots,u_n)=0,~\forall~1\leq i\leq d\}}\\
   = &\abs{\{(u_1,\ldots,u_{n})\in \Fq^{n}: f_{\bba_i}(u_1,\ldots,u_n)=f_{\tilde{\bba}_i}(u_1,\ldots,u_n),~\forall~1\leq i\leq d\}}\\
   = & \abs{\{\bbu=(u_1,\ldots,u_{n+d})\in \Fq^{n+d}: \bbu\in V_{\bba_1,\ldots,\bba_d}\cap V_{\tilde{\bba}_1,\ldots,\tilde{\bba}_d}\}}.
\end{align*}
This concludes the proof.
\end{proof}

Having established that the matrix $\bbA$ represents the linear operator $\phi_z$ under the standard basis, we can conclude by Proposition \ref{prop_character_eigenvector} that the characters of $G_{\bbb_1,\ldots,\bbb_d}$ are the eigenvectors of $\bbA$. Since $G_{\bbb_1,\ldots,\bbb_d}\cong \Fq^{d(n+1)}$, we can associate each character of $G_{\bbb_1,\ldots,\bbb_d}$ with a unique character $\chi_{\bba_1,\ldots,\bba_d}$ of $\Fq^{d(n+1)}$, where $\bba_i\in \Fq^{n+1}$ for all $1\leq i\leq d$. Then, for each $(f_{\tilde{\bba}_1},\ldots,f_{\tilde{\bba}_d})\in G_{\bbb_1,\ldots,\bbb_d}$, we have $$\chi_{\bba_1,\ldots,\bba_d}\left(\left(f_{\tilde{\bba}_1},\ldots,f_{\tilde{\bba}_d}\right)\right)=e\left(\mathrm{Tr}\left(\sum_{i=1}^{d}\langle \bba_i,\tilde{\bba}_i \rangle\right)\right).$$
The following proposition gives their eigenvalues.

\begin{proposition}\label{prop_eigenvalue_TT^*}
    A character $\chi_{\bba_1,\ldots,\bba_d}\in\mathbb{C}[G_{\bbb_1,\ldots,\bbb_d}]$ of $G_{\bbb_1,\ldots,\bbb_d}$ is an eigenvector of $\phi_z$ with eigenvalue 
    
    \begin{equation*}
        \lambda=\left\{\begin{array}{cc}
        q^{(d+1)n}, & \mathrm{if}~(\bba_1,\ldots,\bba_d)=\mathbf{0};\\
        & \mathrm{if}~\exists~\bby\in \Fq^n~s.t.~\mathrm{for~all}~i\in [d],\\
         q^{dn}, & \bba_i=c_i(\bby,1)~\mathrm{for~some}\\
        &(c_1,\ldots,c_d)\in\Fq^{d}\setminus \{\mathbf{0}\};\\
        0, & \mathrm{otherwise}. 
    \end{array}\right.
    \end{equation*}
    
\end{proposition}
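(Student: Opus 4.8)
**Proof plan for Proposition \ref{prop_eigenvalue_TT^*}.**

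The plan is to compute the eigenvalue $\lambda$ of $\phi_z$ associated with the character $\chi_{\bba_1,\ldots,\bba_d}$ directly via Proposition \ref{prop_character_eigenvector}, which gives $\lambda = \langle z, \chi_{\bba_1,\ldots,\bba_d}\rangle$. Unwinding the inner product on $\mathbb{C}[G_{\bbb_1,\ldots,\bbb_d}] \cong \mathbb{C}[\Fq^{d(n+1)}]$ and using the formula for $c_{(f_1,\ldots,f_d)}$ from Lemma \ref{lem_representation_TT^*}, I would write
$$\bar\lambda = \sum_{(\tilde\bba_1,\ldots,\tilde\bba_d)\in(\Fq^{n+1})^d} \abs{\{\bby\in\Fq^n : f_{\tilde\bba_i}(\bby)=0\ \forall i\}}\cdot e\!\left(\mathrm{Tr}\Big(\sum_{i=1}^d\langle\bba_i,\tilde\bba_i\rangle\Big)\right).$$
Then I would replace the cardinality by a sum of indicator characters: $\abs{\{\bby : f_{\tilde\bba_i}(\bby)=0\ \forall i\}} = q^{-d}\sum_{\bbc\in\Fq^d}\sum_{\bby\in\Fq^n} e(\mathrm{Tr}(\sum_i c_i f_{\tilde\bba_i}(\bby)))$, using the standard fact that $q^{-1}\sum_{c\in\Fq}e(\mathrm{Tr}(c t))$ is $1$ if $t=0$ and $0$ otherwise. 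Substituting $f_{\tilde\bba_i}(\bby) = \sum_{j=1}^n \tilde a_{i,j}y_j^{b_{i,j}} + \tilde a_{i,n+1}$ and swapping the order of summation, the sum over $(\tilde\bba_1,\ldots,\tilde\bba_d)$ factors as a product over the $d(n+1)$ coordinates $\tilde a_{i,j}$.

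For each coordinate $\tilde a_{i,j}$ with $j\le n$, the inner sum is $\sum_{\tilde a_{i,j}\in\Fq} e(\mathrm{Tr}(\tilde a_{i,j}(c_i y_j^{b_{i,j}} + a_{i,j})))$, which vanishes unless $a_{i,j} = -c_i y_j^{b_{i,j}}$, in which case it equals $q$; for the coordinate $\tilde a_{i,n+1}$ it vanishes unless $a_{i,n+1} = -c_i$, giving $q$. Hence only triples $(\bbc,\bby)$ with $a_{i,n+1} = -c_i$ and $a_{i,j} = -c_i y_j^{b_{i,j}}$ for all $i,j$ survive. Using $\gcd(b_{i,j},q-1)=1$, the map $y_j\mapsto y_j^{b_{i,j}}$ is a bijection, so once some $c_i\ne 0$ the compatibility conditions pin down $y_j = (-a_{i,j}/c_i)^{1/b_{i,j}}$; consistency across all $i$ with $c_i\ne 0$ forces exactly the constraint in the statement, namely that there is a single $\bby\in\Fq^n$ with $\bba_i = c_i(\bby,1)$ for all $i$ (where a zero $c_i$ forces $\bba_i=\mathbf 0$). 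I would then split into the three cases: (i) $(\bba_1,\ldots,\bba_d)=\mathbf 0$, where every $(\bbc,\bby)$ contributes and one counts $q^{d(n+1)}\cdot q^{-d}\cdot q^n = q^{(d+1)n}$ after also accounting for the free sum over $\bby$ — here all $q^{d(n+1)}$ choices of $\tilde\bba$ collapse correctly; (ii) the generic nonzero-but-compatible case, where a unique $\bby$ and a $(d-1)$-dimensional (projectively speaking) family of $\bbc$'s survive, yielding $q^{dn}$; (iii) the incompatible case, where no $(\bbc,\bby)$ survives and $\lambda = 0$.

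The main obstacle I anticipate is bookkeeping in case (i) and case (ii): carefully tracking the powers of $q$ coming from the $q^{-d}$ prefactor, the free sum over $\bby\in\Fq^n$ when $\bbc=\mathbf 0$, and the number of surviving $\bbc$ in the compatible case, so that the three values $q^{(d+1)n}$, $q^{dn}$, $0$ come out exactly rather than off by a factor of $q$. A secondary subtlety is verifying that the stated condition ``$\exists\,\bby$ with $\bba_i = c_i(\bby,1)$ for all $i$, for some $(c_1,\ldots,c_d)\ne\mathbf 0$'' is precisely equivalent to the existence of at least one surviving $(\bbc,\bby)$ with $\bbc\ne\mathbf 0$ — in particular that the coordinates $i$ with $c_i=0$ impose $\bba_i=\mathbf 0$ and impose no constraint on $\bby$, so they are automatically consistent with the common $\bby$ determined by the indices where $c_i\ne 0$. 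Everything else is a routine character-sum computation, and $\bar\lambda=\lambda$ since all the values obtained are real.
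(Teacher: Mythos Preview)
Your overall strategy is the paper's: both compute $\lambda=\langle z,\chi_{\bba_1,\ldots,\bba_d}\rangle$ by pushing the sum over the common zero set inside and reducing to character sums in the $\tilde\bba_i$ variables. The paper first applies the bijection $x_j\mapsto x_j^{b_{i,j}}$ to linearize the constraint to $\langle\tilde\bba_i,(\bby,1)\rangle=0$ and then sums $\chi_{\bba_i}$ over that hyperplane, while you Fourier-expand the indicator with auxiliary variables $c_1,\ldots,c_d$ and sum coordinatewise; these are minor technical variants of the same computation.

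Your case descriptions need correction, though. In case (i) it is not true that ``every $(\bbc,\bby)$ contributes'': the constraint $a_{i,n+1}=-c_i$ forces $\bbc=\mathbf{0}$, after which all $q^n$ choices of $\bby$ survive --- that is the source of your $q^n$ factor. In case (ii) there is no ``$(d-1)$-dimensional family of $\bbc$'s'': the same constraint pins down $\bbc$ uniquely, and exactly one pair $(\bbc,\bby)$ survives, contributing $q^{-d}\cdot q^{d(n+1)}=q^{dn}$. More substantively, your assertion that the surviving condition ``forces exactly the constraint in the statement'' is not justified. What you actually derive is $\bba_i=c_i\bigl(y_1^{b_{i,1}},\ldots,y_n^{b_{i,n}},1\bigr)$ for a common $\bby$, whereas the proposition states $\bba_i=c_i(\bby,1)$. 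Matching these requires the reparametrization $y_j\mapsto y_j^{b_{i,j}}$, and that yields a single $\bby$ independent of $i$ only when the exponent $b_{i,j}$ does not depend on $i$. The paper's own bijection step (``$x_j^{b_{i,j}}=y_j$ for all $1\le j\le n$'') glosses over exactly the same point, so this is a subtlety in the proposition itself rather than a defect unique to your plan; but you should flag it rather than claim the match is automatic.
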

\begin{proof}
    By Proposition \ref{prop_character_eigenvector}, $\lambda=\langle z,\chi_{\bba_1,\ldots,\bba_d} \rangle$. Then, 
    \begin{align}
        \bar{\lambda}& 
        =\sum_{(f_{\tilde{\bba}_1},\ldots,f_{\tilde{\bba}_d})\in G_{\bbb_1,\ldots,\bbb_d}}\overline{z\left((f_{\tilde{\bba}_1},\ldots,f_{\tilde{\bba}_d})\right)}\chi_{\bba_1,\ldots,\bba_d}((f_{\tilde{\bba}_1},\ldots,f_{\tilde{\bba}_d})) \nonumber\\
        &=\sum_{(\tilde{\bba}_1,\ldots,\tilde{\bba}_d)\in \Fq^{d(n+1)}}\sum_{\bbx\in \Fq^n s.t.\forall i\in [d]\atop f_{\tilde{\bba}_i}(\bbx)=0}\chi_{\bba_1,\ldots,\bba_d}((f_{\tilde{\bba}_1},\ldots,f_{\tilde{\bba}_d})) \nonumber\\
        &=\sum_{(\tilde{\bba}_1,\ldots,\tilde{\bba}_d)\in \Fq^{d(n+1)}}\sum_{\bbx\in \Fq^n s.t.\forall i\in [d]\atop f_{\tilde{\bba}_i}(\bbx)=0}e\left(\mathrm{Tr}\left(\sum_{i=1}^{d}\langle \bba_i,\tilde{\bba}_i \rangle\right)\right).\label{eq1_prop_eigenvalue_TT^*}
    \end{align}
    Recall that $\bbb_1,\ldots,\bbb_d$ are fixed vectors in $(\mathbb{Z}^{+})^n$ with $\mathrm{gcd}(b_{i,j},q-1)=1$ for all $1\leq i\leq d$ and $1\leq j\leq n$. Thus, for each $(y_1,\ldots,y_n)\in\Fq^n$ that satisfies 
    \begin{equation}\label{eq1.5_prop_eigenvalue_TT^*}
        \sum_{j=1}^{n}\tilde{a}_{i,j}y_{j}+\tilde{a}_{i,n+1}=0,~\forall~i\in [d],
    \end{equation}
    there is a unique $(x_1,\ldots,x_n)\in \Fq^n$ such that $x_j^{b_{i,j}}=y_j$ for all $1\leq j\leq n$. That is, there is a one-to-one correspondence between $\bbx\in \Fq^n$ satisfying $f_{\tilde{\bba}_i}(\bbx)=0$ for all $i\in [d]$ and  $\bby\in \Fq^n$ satisfying (\ref{eq1.5_prop_eigenvalue_TT^*}). Therefore, (\ref{eq1_prop_eigenvalue_TT^*}) becomes
    \begin{align}
        \bar{\lambda}& 
        =\sum_{(\tilde{\bba}_1,\ldots,\tilde{\bba}_d)\in \Fq^{d(n+1)}}\sum_{\bby\in \Fq^n s.t.\forall i\in [d]\atop \langle\tilde{\bba}_i, (\bby,1)\rangle=0}e\left(\mathrm{Tr}\left(\sum_{i=1}^{d}\langle \bba_i,\tilde{\bba}_i \rangle\right)\right) \nonumber\\
        &=\sum_{\bby\in \Fq^n}\sum_{(\tilde{\bba}_1,\ldots,\tilde{\bba}_d)\in \Fq^{d(n+1)} s.t.\forall i\in [d]\atop \langle\tilde{\bba}_i, (\bby,1)\rangle=0}e\left(\mathrm{Tr}\left(\sum_{i=1}^{d}\langle \bba_i,\tilde{\bba}_i \rangle\right)\right) \nonumber\\
        &=\sum_{\bby\in \Fq^n}\prod_{i=1}^{d}\left(\sum_{\tilde{\bba}_i\in \Fq^{n+1} \atop \langle\tilde{\bba}_i, (\bby,1)\rangle=0}e\left(\mathrm{Tr}\left(\langle \bba_i,\tilde{\bba}_i \rangle\right)\right)\right)
        ,\label{eq2_prop_eigenvalue_TT^*}
    \end{align}
    where the last equality follows by $e\left(\mathrm{Tr}\left(\sum_{i=1}^{d}\langle \bba_i,\tilde{\bba}_i \rangle\right)\right)=\prod_{i=1}^{d}e\left(\mathrm{Tr}\left(\langle \bba_i,\tilde{\bba}_i \rangle\right)\right)$.

    Next, for each $\bby=(y_1,\ldots,y_n)\in \Fq^n$ and $1\leq i\leq d$, we compute $$\Gamma_i:=\sum_{\tilde{\bba}_i\in \Fq^{n+1} \atop 
    \langle\tilde{\bba}_i, (\bby,1)\rangle=0} e\left(\mathrm{Tr}\left(\langle \bba_i,\tilde{\bba}_i \rangle\right)\right).$$ 
    
    If $\bba_i=\mathbf{0}$, $e(\mathrm{Tr}(\langle \bba_i,\bbu \rangle))=1$ for any $\bbu\in \Fq^{n+1}$. Note that there are exactly $q^n$ different $\tilde{\bba}_i\in \Fq^{n+1}$ satisfying $\sum_{j=1}^{n}\tilde{a}_{i,j}y_{j}+\tilde{a}_{i,n+1}=0$. This leads to $\Gamma_i=q^n$.
    
    If $\bba_i=c(y_1,\ldots,y_n,1)$ for some $c\in \Fq^*$, then $\langle \bba_i,\tilde{\bba}_i \rangle=0$ holds for each $\tilde{\bba}_i\in \Fq^{n+1}$ satisfying $\sum_{j=1}^{n}\tilde{a}_{i,j}y_{j}+\tilde{a}_{i,n+1}=0$. This also leads to $\Gamma_i=q^n$.
    
    When $\bba_i\neq c(y_1,\ldots,y_n,1)$ for any $c\in \Fq$, then we have $\bba_{i}|_{[n]}-a_{i,n+1}\bby\neq \mathbf{0}$. Thus,
    \begin{align*}
        \Gamma_i
        &=\sum_{\tilde{\bba}_i\in \Fq^{n+1} \atop \langle\tilde{\bba}_i, (\bby,1)\rangle=0}e\left(\mathrm{Tr}\left(\sum_{j=1}^{n}a_{i,j}\tilde{a}_{i,j}+a_{i,n+1}\tilde{a}_{i,n+1}\right)\right)\\
        &=\sum_{\tilde{\bba}_i|_{[n]}\in \Fq^{n}}e\left(\mathrm{Tr}\left(\sum_{j=1}^{n}a_{i,j}\tilde{a}_{i,j}-a_{i,n+1}\sum_{j=1}^{n}\tilde{a}_{i,j}y_{j}\right)\right)\\
        &=\sum_{\tilde{\bba}_i|_{[n]}\in \Fq^{n}}e\left(\mathrm{Tr}\left(\langle \tilde{\bba}_i|_{[n]}, \bba_{i}|_{[n]}-a_{i,n+1}\bby \rangle\right)\right)=0.
    \end{align*}

    Note that for each $(\bba_1,\ldots,\bba_d)\in \Fq^{d(n+1)}$, there is at most one $\bby\in \Fq^n$ satisfying $$\bba_i=c_i(y_1,\ldots,y_n,1),~\forall~i\in [d],$$ 
    for some $\bbc=(c_1,\ldots,c_d)\in \Fq^d\setminus\{\mathbf{0}\}$. Therefore, by (\ref{eq2_prop_eigenvalue_TT^*}), we can conclude that
    \begin{align*}
       \bar{\lambda}& 
        =\sum_{\bby\in \Fq^n}\prod_{i=1}^{d}\Gamma_i\\
        &=\left\{\begin{array}{cc}
        q^{(d+1)n}, & \mathrm{if}~(\bba_1,\ldots,\bba_d)=\mathbf{0};\\
        & \mathrm{if}~\exists~\bby\in \Fq^n~s.t.~\mathrm{for~all}~i\in [d],\\
         q^{dn}, & \bba_i=c_i(\bby,1)~\mathrm{for~some}\\
        &(c_1,\ldots,c_d)\in\Fq^{d}\setminus \{\mathbf{0}\};\\ 
        0, & \mathrm{otherwise}.
    \end{array}\right.
    \end{align*}
    This completes the proof.
\end{proof}

The following theorem follows easily from Proposition \ref{prop_eigenvalue_TT^*}.

\begin{theorem}\label{thm_eigenvalue_TT^*}
    The character of $G_{\bbb_1,\ldots,\bbb_d}$ are eigenvectors of the operator $\phi_z$ with the eigenvalues listed in Table \ref{Table_2}.
    \begin{table}[h]
    \caption{Eigenvalues and eigenvectors of $\phi_z$}\label{Table_2}
    \centering
    \begin{tabular}{|c|c|c|}
    \hline
    Character $\chi_{\bba_1,\ldots,\bba_d}$ & No. of  characters & Eigenvalue \\ 
    \hline
    $(\bba_1,\ldots,\bba_d)=\mathbf{0}$ & $1$ & $q^{(d+1)n}$ \\ \hline
    $\exists~\bby\in \Fq^n~s.t.~\mathrm{for~all}~i\in [d], \bba_i=c_i(\bby,1)$ & \multirow{2}{*}{$q^{n+d}-q^n$} & \multirow{2}{*}{$q^{dn}$} \\ 
    $\mathrm{for~some}~(c_1,\ldots,c_d)\in\Fq^{d}\setminus \{\mathbf{0}\}$ & & \\
    \hline
    else & $q^{d(n+1)}-q^{n+d}+q^n-1$ & $0$ \\ \hline
    \end{tabular}
\end{table}
\end{theorem}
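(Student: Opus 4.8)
The plan is to deduce \Tref{thm_eigenvalue_TT^*} directly from \Pref{prop_eigenvalue_TT^*}, which already assigns the correct eigenvalue to every character $\chi_{\bba_1,\ldots,\bba_d}$ of $G_{\bbb_1,\ldots,\bbb_d}$. Since there are exactly $q^{d(n+1)}$ characters and they form a basis of $\mathbb{C}[G_{\bbb_1,\ldots,\bbb_d}]$ (by the orthogonality of characters recalled in \Sref{sec: pre}, combined with $\dim \mathbb{C}[G_{\bbb_1,\ldots,\bbb_d}] = q^{d(n+1)}$), the only thing left to verify is the count in the middle column of \Tabref{Table_2}: the three cases in \Pref{prop_eigenvalue_TT^*} are mutually exclusive and exhaustive, so it suffices to count the characters falling into each.

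First I would dispose of the trivial row: there is exactly one character with $(\bba_1,\ldots,\bba_d)=\mathbf{0}$, giving eigenvalue $q^{(d+1)n}$. Next I would count the characters in the second row, namely those $(\bba_1,\ldots,\bba_d)\in \Fq^{d(n+1)}\setminus\{\mathbf 0\}$ for which there exists $\bby\in\Fq^n$ and $(c_1,\ldots,c_d)\in\Fq^d\setminus\{\mathbf 0\}$ with $\bba_i=c_i(\bby,1)$ for all $i\in[d]$. The key observation — already noted at the end of the proof of \Pref{prop_eigenvalue_TT^*} — is that for any such tuple the point $\bby$ is uniquely determined: since $(\bba_1,\ldots,\bba_d)\neq\mathbf 0$, some $c_i\neq 0$, and then $\bby=\frac{1}{c_i}\bba_i|_{[n]}$, while the last coordinate $c_i$ of $\bba_i$ is also forced. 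Hence each tuple in this row is specified by a choice of $\bby\in\Fq^n$ and a nonzero vector $(c_1,\ldots,c_d)\in\Fq^d\setminus\{\mathbf 0\}$, and distinct choices give distinct tuples (the map $(\bby,\bbc)\mapsto(c_1(\bby,1),\ldots,c_d(\bby,1))$ is injective on $\Fq^n\times(\Fq^d\setminus\{\mathbf 0\})$ because $\bby$ and $\bbc$ can both be recovered from the image). This yields exactly $q^n\cdot(q^d-1)=q^{n+d}-q^n$ characters, matching the table. Finally, the third (``else'') row collects all remaining characters, so its count is $q^{d(n+1)}-1-(q^{n+d}-q^n)=q^{d(n+1)}-q^{n+d}+q^n-1$, and \Pref{prop_eigenvalue_TT^*} assigns them eigenvalue $0$.

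The only mild subtlety — and the single place one must be careful — is the injectivity/uniqueness argument in the second row: one must check that a tuple cannot arise from two genuinely different pairs $(\bby,\bbc)$, and that the three cases of \Pref{prop_eigenvalue_TT^*} do not overlap (e.g.\ the second case explicitly excludes $\bbc=\mathbf 0$, which would force the tuple to be $\mathbf 0$ and land in the first case instead). Once this bookkeeping is in place, the theorem is immediate: it is just \Pref{prop_eigenvalue_TT^*} reorganized into a table together with the multiplicity counts, and no further computation is needed.
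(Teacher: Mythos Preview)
Your proposal is correct and matches the paper's approach exactly: the paper simply states that \Tref{thm_eigenvalue_TT^*} ``follows easily from \Pref{prop_eigenvalue_TT^*}'' without giving any further argument. You actually supply the explicit multiplicity counts and the injectivity check that the paper leaves implicit, so your write-up is if anything more complete than the original.
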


\begin{corollary}\label{coro_left_singular_T}
    The normalized characters of $G_{\bbb_1,\ldots,\bbb_d}$ serve as left-singular vectors of the matrix $\bbT$, each associated with a singular value equals to the square root of its corresponding eigenvalue listed in Theorem \ref{thm_eigenvalue_TT^*}.
\end{corollary}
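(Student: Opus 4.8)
The plan is to read off the conclusion directly from the singular value decomposition machinery set up in Section~\ref{sec: left singular vectors}, together with the eigenvalue/eigenvector analysis of $\phi_z$. Recall that by definition $\bbA = \bbT\cdot\bbT^{*}$, and by the SVD relation \eqref{eq_TT^*}, writing $\bbT = \bbU\cdot\mathbf{\Sigma}\cdot\bbV^{*}$, the columns of $\bbU$ (the left-singular vectors of $\bbT$) are precisely an orthonormal eigenbasis of $\bbT\cdot\bbT^{*}$, and the eigenvalue attached to each such column is the square of the corresponding diagonal entry of $\mathbf{\Sigma}$. Equivalently: if $\bbw$ is a unit eigenvector of $\bbT\cdot\bbT^{*}$ with eigenvalue $\mu$, then $\bbw$ may be taken as a left-singular vector of $\bbT$ with singular value $\sqrt{\mu}$ (this uses that $\bbT\cdot\bbT^{*}$ is symmetric positive semidefinite, so $\mu\ge 0$).

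First I would invoke Lemma~\ref{lem_representation_TT^*}, which identifies $\bbA = [\phi_z]_B^B$ under the standard basis $B=\{x_{(f_1,\ldots,f_d)}\}$ of $\mathbb{C}[G_{\bbb_1,\ldots,\bbb_d}]\cong\mathbb{C}^{q^{d(n+1)}}$. Hence the eigenvectors of the matrix $\bbA$, expressed in coordinates relative to $B$, are exactly the eigenvectors of the operator $\phi_z$. Next, by Proposition~\ref{prop_character_eigenvector} every character $\chi_{\bba_1,\ldots,\bba_d}$ of $G_{\bbb_1,\ldots,\bbb_d}$ is an eigenvector of $\phi_z$, and by Theorem~\ref{thm_eigenvalue_TT^*} the associated eigenvalue is the one listed in Table~\ref{Table_2}. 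Since the number of distinct characters equals $q^{d(n+1)} = \dim \mathbb{C}[G_{\bbb_1,\ldots,\bbb_d}]$ and the characters are pairwise orthogonal under the inner product $\langle f,g\rangle = \sum_{\bbu} f(\bbu)\overline{g(\bbu)}$ (each having norm $q^{d(n+1)/2}$), dividing each character by $q^{d(n+1)/2}$ produces an orthonormal eigenbasis of $\bbA$. This orthonormal eigenbasis can therefore serve as the set of columns of the unitary matrix $\bbU$ in an SVD of $\bbT$, and the singular value attached to the normalized character $\chi_{\bba_1,\ldots,\bba_d}/q^{d(n+1)/2}$ is $\sqrt{\lambda}$ with $\lambda$ as in Theorem~\ref{thm_eigenvalue_TT^*}. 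This is exactly the claim.

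There is no real obstacle here; the only point that needs a word of care is that one must verify that the normalized characters genuinely form an orthonormal basis (not merely an orthogonal eigenset), so that they are legitimately the columns of a unitary $\bbU$ rather than just an arbitrary eigenbasis --- but this is immediate from the orthogonality relations for characters of a finite abelian group together with the dimension count. One should also note that whenever an eigenvalue has multiplicity greater than $1$, any orthonormal basis of its eigenspace works, and the characters supply one such basis; this matches the remark after \eqref{eq_T^*T} that a positive eigenvalue $\lambda$ of $\bbT\cdot\bbT^{*}$ of multiplicity $d$ corresponds to $d$ singular values equal to $\sqrt{\lambda}$. Thus the corollary follows.
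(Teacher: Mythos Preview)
Your proposal is correct and follows exactly the route the paper intends: the corollary is stated without proof because it is meant to be read off directly from Lemma~\ref{lem_representation_TT^*}, Theorem~\ref{thm_eigenvalue_TT^*}, and the SVD machinery of Section~2.2, precisely as you have spelled out. There is nothing to add.
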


Building on Theorem \ref{thm_eigenvalue_TT^*}, we next establish a bound on the squared norm of the projection of an indicator vector onto the eigenspaces. 

\begin{lemma}\label{lem_proj_on_U}
    Let $\mathbf{1}_{\cV}\in \mathbb{C}^{q^{d(n+1)}}$ be the indicator vector of a set $\cV$ of varieties of the form (\ref{eq_variety_1}). Then, the squared norm of the projection of $\mathbf{1}_{\cV}$ onto the eigenspaces with eigenvalues $q^{(d+1)n}$ and $q^{dn}$ are $\frac{|\cV|^2}{q^{d(n+1)}}$ and at most
    \begin{equation*}
    \frac{|\cV|}{q^{(d-1)n}}\parenv{1-\frac{1}{q}+\frac{|\cV|(q^{d-1}-1)}{q^d}},
    \end{equation*}
    respectively.
\end{lemma}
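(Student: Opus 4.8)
The argument runs parallel to that of Lemma~\ref{lem_proj_on_V}, but now in the group algebra $\mathbb{C}[G_{\bbb_1,\ldots,\bbb_d}]\cong\mathbb{C}[\Fq^{d(n+1)}]$, using the eigenvector/eigenvalue description of $\phi_z$ from Theorem~\ref{thm_eigenvalue_TT^*}. Throughout I identify $\mathbf{1}_{\cV}$ with an element of $\mathbb{C}[G_{\bbb_1,\ldots,\bbb_d}]$ via the bijection $(\tilde{\bba}_1,\ldots,\tilde{\bba}_d)\leftrightarrow V_{\tilde{\bba}_1,\ldots,\tilde{\bba}_d}$ furnished by Lemma~\ref{lem_number of varieties}, and write $\tilde{\bba}=(\tilde{\bba}_1,\ldots,\tilde{\bba}_d)$ for brevity. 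The eigenspace with eigenvalue $q^{(d+1)n}$ is one-dimensional, spanned by the trivial character, whose normalization is $\chi_{\mathbf{0}}/q^{d(n+1)/2}$; hence the squared norm of the projection of $\mathbf{1}_{\cV}$ onto it is $|\langle\mathbf{1}_{\cV},\chi_{\mathbf{0}}/q^{d(n+1)/2}\rangle|^2=|\cV|^2/q^{d(n+1)}$, as claimed.

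For the eigenspace with eigenvalue $q^{dn}$, recall from Theorem~\ref{thm_eigenvalue_TT^*} that its characters are exactly the $\chi_{\bba_1,\ldots,\bba_d}$ with $\bba_i=c_i(\bby,1)$ for a common $\bby\in\Fq^n$ and some $\bbc=(c_1,\ldots,c_d)\in\Fq^d\setminus\{\mathbf{0}\}$; moreover the map $(\bby,\bbc)\mapsto(\bba_1,\ldots,\bba_d)$ is a bijection onto this index set (its image has size $q^n(q^d-1)=q^{n+d}-q^n$, matching the count in Table~\ref{Table_2}). Using $\langle\mathbf{1}_{\cV},\chi_{\bba_1,\ldots,\bba_d}\rangle=\sum_{V_{\tilde{\bba}}\in\cV}e\bigl(-\mathrm{Tr}\bigl(\sum_{i}c_i\langle(\bby,1),\tilde{\bba}_i\rangle\bigr)\bigr)$ when $\bba_i=c_i(\bby,1)$, I will expand the sum of squared projections onto this eigenspace as
\begin{align*}
&\sum_{(\bby,\bbc)}\left|\left\langle\mathbf{1}_{\cV},\frac{\chi_{\bba_1,\ldots,\bba_d}}{q^{d(n+1)/2}}\right\rangle\right|^2\\
&\qquad=\frac{1}{q^{d(n+1)}}\sum_{\bby\in\Fq^n}\ \sum_{\bbc\neq\mathbf{0}}\ \sum_{V_{\tilde{\bba}},V_{\tilde{\bba}'}\in\cV}e\left(-\mathrm{Tr}\left(\sum_{i=1}^{d}c_i\delta_i\right)\right),
\end{align*}
where $\delta_i=\langle(\bby,1),\tilde{\bba}_i-\tilde{\bba}'_i\rangle$. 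Summing over $\bbc\in\Fq^d\setminus\{\mathbf{0}\}$ by character orthogonality on $\Fq^d$ turns the innermost sum into $q^d-1$ when $(\delta_1,\ldots,\delta_d)=\mathbf{0}$ and into $-1$ otherwise, so the whole quantity equals $\frac{1}{q^{d(n+1)}}\bigl(q^d\sum_{\bby}N_0(\bby)-q^n|\cV|^2\bigr)$, where $N_0(\bby)$ counts the ordered pairs $(\tilde{\bba}),(\tilde{\bba}')$ with $V_{\tilde{\bba}},V_{\tilde{\bba}'}\in\cV$ and $\langle(\bby,1),\tilde{\bba}_i-\tilde{\bba}'_i\rangle=0$ for all $i\in[d]$.

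It then remains to bound $\sum_{\bby}N_0(\bby)$, which by swapping the order of summation equals $\sum_{V_{\tilde{\bba}},V_{\tilde{\bba}'}\in\cV}\bigl|\{\bby\in\Fq^n:\langle(\bby,1),\tilde{\bba}_i-\tilde{\bba}'_i\rangle=0,\ \forall i\in[d]\}\bigr|$. The $|\cV|$ diagonal pairs $\tilde{\bba}=\tilde{\bba}'$ each contribute $q^n$. For an off-diagonal pair, Lemma~\ref{lem_number of varieties} forces $\tilde{\bba}_k\neq\tilde{\bba}'_k$ for some $k$, so the $k$-th equation in $\bby$ is a nontrivial affine condition with exactly $q^{n-1}$ solutions, unless $(\tilde{\bba}_k-\tilde{\bba}'_k)|_{[n]}=\mathbf{0}$, in which case it is unsatisfiable; in either case the pair contributes at most $q^{n-1}$. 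Hence $\sum_{\bby}N_0(\bby)\le q^n|\cV|+q^{n-1}(|\cV|^2-|\cV|)$, and substituting this into $\frac{1}{q^{d(n+1)}}\bigl(q^d\sum_{\bby}N_0(\bby)-q^n|\cV|^2\bigr)$ followed by routine simplification yields exactly $\frac{|\cV|}{q^{(d-1)n}}\bigl(1-\frac{1}{q}+\frac{|\cV|(q^{d-1}-1)}{q^d}\bigr)$. The main (mild) obstacle is the off-diagonal solution count: one must check the bound $q^{n-1}$ holds uniformly over all off-diagonal pairs, including the degenerate case where the relevant difference vector vanishes on its first $n$ coordinates. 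This is the exact analogue of the corresponding step in Lemma~\ref{lem_proj_on_V}, so it presents no real difficulty; everything else is bookkeeping and algebra.
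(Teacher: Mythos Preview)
Your proof is correct and arrives at exactly the same bound as the paper. The overall architecture is identical: expand the squared projection as a double sum over pairs from $\cV$, separate diagonal from off-diagonal contributions, and bound the off-diagonal term. The only difference is the order in which you perform the inner sums. The paper sums over $\bby\in\Fq^n$ first (collapsing to $q^n$ times an indicator that $\sum_i c_i(\tilde{\bba}_i-\tilde{\bba}'_i)|_{[n]}=\mathbf{0}$), and then handles the remaining sum over $\bbc$ in the null space of the matrix $\bbM_{(\bbu,\tilde{\bbu})}$ with columns $(\tilde{\bba}_i-\tilde{\bba}'_i)|_{[n]}$, bounding it by $q^{d-1}-1$ via a rank argument. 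You instead sum over $\bbc\in\Fq^d\setminus\{\mathbf{0}\}$ first (collapsing to $q^d N_0(\bby)-|\cV|^2$ by orthogonality on $\Fq^d$), and then bound $\sum_{\bby}N_0(\bby)$ by the elementary affine count that a single nontrivial equation $\langle(\bby,1),\tilde{\bba}_k-\tilde{\bba}'_k\rangle=0$ has at most $q^{n-1}$ solutions. A short computation shows the two off-diagonal bounds, $q^n(q^{d-1}-1)$ per pair (paper) and $q^d\cdot q^{n-1}$ in the $N_0$-term minus the $q^n$ subtracted globally (yours), give literally the same final expression. Your route is marginally cleaner in that it sidesteps the case analysis on $\operatorname{rank}(\bbM_{(\bbu,\tilde{\bbu})})$ and $\bbw_{(\bbu,\tilde{\bbu})}$; the paper's route makes the connection to the matrix structure of the pair more explicit. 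Neither buys anything the other does not.
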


\begin{proof}
    By the definition, every $V_{\bba_1,\ldots,\bba_d}\in \cV$ is uniquely determined by a $d$-tuple of vectors $(\bba_1,\ldots,\bba_d)$ with $\bba_i\in\Fq^{n+1}$ for all $1\leq i\leq d$. Thus, we can associate $\cV$ with a unique subset of vectors in $\Fq^{d(n+1)}$. With a slight abuse of notation, we also use $\cV$ to denote the associated subset of vectors in $\Fq^{d(n+1)}$. Consequently, in the following proof, we will also regard $\mathbf{1}_{\cV}$ as the indicator vector of the corresponding subset of vectors in $\Fq^{d(n+1)}$.

    The squared norm of the projection of $\mathbf{1}_{\cV}$ onto the normalized trivial character is
    $$\abs{\langle \mathbf{1}_{\cV},\frac{\chi_{\mathbf{0}}}{q^{d(n+1)/2}} \rangle}^2=\frac{|\cV|^2}{q^{d(n+1)}}.$$

    For $\bby\in \Fq^{n}$ and $\bbc\in \Fq^{d}\setminus\{\mathbf{0}\}$, denote $\chi_{(\bby,\bbc)}\triangleq \chi_{\bba_{1},\ldots,\bba_d}$ with $\bba_i=c_i(y_1,\ldots,y_n,1)$ for all $1\leq i\leq d$. By Theorem \ref{thm_eigenvalue_TT^*}, the eigenspace corresponding to eigenvalue $q^{dn}$ is spanned by the set of characters 
    $\{\chi_{(\bby,\bbc)}:~\bby\in \Fq^{n}~\mathrm{and}~\bbc\in \Fq^{d}\setminus\{\mathbf{0}\}\}$.
    
    Next, we bound the squared norm of the projection of $\mathbf{1}_{\cV}$ onto the eigenspace spanned by these characters: 
    \begin{align}
       & \sum_{\bby\in \Fq^{n} \atop \bbc\in \Fq^{d}\setminus\{\mathbf{0}\}}
       \abs{\langle \mathbf{1}_{\cV},\frac{\chi_{(\bby,\bbc)}}{q^{d(n+1)/2}}\rangle}^2 \nonumber\\ 
      = & q^{-d(n+1)}\sum_{\bby\in \Fq^{n} \atop \bbc\in \Fq^{d}\setminus\{\mathbf{0}\}}
      \sum_{\bbu\in \cV}\chi_{(\bby,\bbc)}(\bbu)\sum_{\tilde{\bbu}\in \cV}\overline{\chi_{(\bby,\bbc)}(\tilde{\bbu})} \nonumber\\
      = & q^{-d(n+1)}\sum_{\bby\in \Fq^{n} \atop \bbc\in \Fq^{d}\setminus\{\mathbf{0}\}}
      \sum_{\bbu,\tilde{\bbu}\in \cV}\chi_{(\bby,\bbc)}(\bbu-\tilde{\bbu}) \nonumber\\
      = & \frac{q^{n+d}-q^n}{q^{d(n+1)}}|\cV|+q^{-d(n+1)}\sum_{\bby\in \Fq^{n} \atop \bbc\in \Fq^{d}\setminus\{\mathbf{0}\}}
      \sum_{\bbu\neq\tilde{\bbu}\in \cV}\chi_{(\bby,\bbc)}(\bbu-\tilde{\bbu}).\label{eq1_proj_on_U}
    \end{align}
    
    For any $\bbu\neq \tilde{\bbu}\in \cV$, denote $\bbu=(\bbu_1,\ldots,\bbu_d)$ and $\tilde{\bbu}=(\tilde{\bbu}_1,\ldots,\tilde{\bbu}_d)$, where $\bbu_i=(u_{i,1},\ldots,u_{i,n+1})$ and $\tilde{\bbu}_i=(\tilde{u}_{i,1},\ldots,\tilde{u}_{i,n+1})$ are vectors in $\Fq^{n+1}$. Then, we have
    \begin{align}
        &\sum_{\bby\in \Fq^{n} \atop \bbc\in \Fq^{d}\setminus\{\mathbf{0}\}}
      \sum_{\bbu\neq\tilde{\bbu}\in \cV}\chi_{(\bby,\bbc)}(\bbu-\tilde{\bbu}) \nonumber\\
        = & \sum_{\bbu\neq\tilde{\bbu}\in \cV}\sum_{\bby\in \Fq^{n} \atop \bbc\in \Fq^{d}\setminus\{\mathbf{0}\}}e\left(\mathrm{Tr}\left(\sum_{i=1}^{d}\langle c_i(y_1,\ldots,y_n,1), \bbu_i-\tilde{\bbu}_i\rangle\right)\right) \nonumber\\
        = & \sum_{\bbu\neq\tilde{\bbu}\in \cV}\sum_{\bby\in \Fq^{n} \atop \bbc\in \Fq^{d}\setminus\{\mathbf{0}\}}e\left(\mathrm{Tr}\left(\sum_{i=1}^{d}\langle c_i\bby, (\bbu_i-\tilde{\bbu}_i)|_{[n]}\rangle\right)\right)e\left(\mathrm{Tr}\left(\sum_{i=1}^{d}c_i\left(u_{i,n+1}-\tilde{u}_{i,n+1}\right)\right)\right) \nonumber\\
        = & \sum_{\bbu\neq\tilde{\bbu}\in \cV}\sum_{\bbc\in \Fq^{d}\setminus\{\mathbf{0}\}}e\left(\mathrm{Tr}\left(\sum_{i=1}^{d}c_i\left(u_{i,n+1}-\tilde{u}_{i,n+1}\right)\right)\right)\left(\sum_{\bby\in \Fq^{n}}e\left(\mathrm{Tr}\left(\langle \bby, \sum_{i=1}^{d}c_i\left(\bbu_i-\tilde{\bbu}_i\right)|_{[n]}\rangle\right)\right)\right).\label{eq2_proj_on_U}
    \end{align}
    Notice that 
    \begin{equation*}
        \sum_{\bby\in \Fq^{n}}e\left(\mathrm{Tr}\left(\langle \bby, \sum_{i=1}^{d}c_i\left(\bbu_i-\tilde{\bbu}_i\right)|_{[n]}\rangle\right)\right)=\left\{\begin{array}{cc}
        0, & \mathrm{if}~\sum_{i=1}^{d}c_i(\bbu_i-\tilde{\bbu}_i)|_{[n]}\neq \mathbf{0};\\
        q^{n}, & \mathrm{otherwise}.
    \end{array}\right.
    \end{equation*}
    Thus, (\ref{eq2_proj_on_U}) becomes
    \begin{equation*}
      q^n\sum_{\bbu\neq\tilde{\bbu}\in \cV}\sum_{\bbc\in \Fq^{d}\setminus\{\mathbf{0}\}\atop \sum_{i=1}^{d}c_i(\bbu_i-\tilde{\bbu}_i)|_{[n]}= \mathbf{0}}e\left(\mathrm{Tr}\left(\sum_{i=1}^{d}c_i\left(u_{i,n+1}-\tilde{u}_{i,n+1}\right)\right)\right).
    \end{equation*}
    On the other hand, for each $\bbu\neq\tilde{\bbu}\in \cV$, denote $\bbw_{(\bbu,\tilde{\bbu})}\triangleq(u_{1,n+1}-\tilde{u}_{1,n+1},\ldots,u_{d,n+1}-\tilde{u}_{d,n+1})$ and    $$\bbM_{(\bbu,\tilde{\bbu})}\triangleq\left((\bbu_1-\tilde{\bbu}_1)'|_{[n]}~\cdots~(\bbu_d-\tilde{\bbu}_d)'|_{[n]}\right)$$
    as the $n\times d$ matrix with $(\bbu_i-\tilde{\bbu}_i)'|_{[n]}$ as its $i$-th column. Since the null space of the matrix $\bbM_{(\bbu,\tilde{\bbu})}$ is a subspace of $\Fq^d$ of size $q^{d-\mathrm{rank}(\bbM_{(\bbu,\tilde{\bbu})})}$, we have
    \begin{align*}
        \sum_{\bbc\in \Fq^{d}\setminus\{\mathbf{0}\}\atop \sum_{i=1}^{d}c_i(\bbu_i-\tilde{\bbu}_i)|_{[n]}= \mathbf{0}}e\left(\mathrm{Tr}\left(\sum_{i=1}^{d}c_i\left(u_{i,n+1}-\tilde{u}_{i,n+1}\right)\right)\right)&= \sum_{\bbc\in \Fq^{d}\setminus\{\mathbf{0}\}\atop \bbM_{(\bbu,\tilde{\bbu})}\cdot \bbc'=\mathbf{0}}e\left(\mathrm{Tr}\left(\langle \bbc,\bbw_{(\bbu,\tilde{\bbu})}\rangle\right)\right)\\
        &=\begin{cases}        -1,~\text{if}~\bbw_{(\bbu,\tilde{\bbu})}\neq\mathbf{0};\\
        q^{d-\mathrm{rank}(\bbM_{(\bbu,\tilde{\bbu})})}-1,~\text{otherwise}.
        \end{cases}
    \end{align*}
    Moreover, by $\bbu\neq\tilde{\bbu}$, we have $\mathrm{rank}(\bbM_{(\bbu,\tilde{\bbu})})\geq 1$ when $\bbw_{(\bbu,\tilde{\bbu})}=\mathbf{0}$. Thus, by the above discussion, we have
    \begin{align*}
        \sum_{\bby\in \Fq^{n} \atop \bbc\in \Fq^{d}\setminus\{\mathbf{0}\}}
        \sum_{\bbu\neq\tilde{\bbu}\in \cV}\chi_{(\bby,\bbc)}(\bbu-\tilde{\bbu})\leq q^{n}|\cV|(|\cV|-1)(q^{d-1}-1).
    \end{align*}
    Plugging this upper bound into (\ref{eq1_proj_on_U}), we have
    \begin{align*}
    \sum_{\bby\in \Fq^{n} \atop \bbc\in \Fq^{d}\setminus\{\mathbf{0}\}}
    \abs{\langle \mathbf{1}_{\cV},\frac{\chi_{(\bby,\bbc)}}{q^{d(n+1)/2}}\rangle}^2 &\leq 
    \frac{q^{n+d}|\cV|}{q^{d(n+1)}}\parenv{1-\frac{1}{q^d}+(|\cV|-1)\frac{q^{d-1}-1}{q^d}}\\
    &=\frac{|\cV|}{q^{(d-1)n}}\parenv{1-\frac{1}{q}+\frac{|\cV|}{q^d}(q^{d-1}-1)}.
    \end{align*}
    This completes the proof.
\end{proof}

\subsection{Proof of Theorem \ref{main_thm1}}

In this section, using the singular value decomposition of $\bbT$, along with Lemma \ref{lem_proj_on_V} and Lemma \ref{lem_proj_on_U}, we prove Theorem \ref{main_thm1}, which is restated below for the reader's convenience.

\mainthmone*

\begin{proof}[Proof of Theorem \ref{main_thm1}]
    Let $\mathbf{1}_{\cP}$ and $\mathbf{1}_{\cV}$ be the indicator vectors of the point set $\cP$ and the variety set $\cV$, respectively. Then, by Lemma \ref{lem_proj_on_V}, we can express $\mathbf{1}_{\cP}$ as
    \begin{equation}\label{eq_indicator_vec_P}
        \mathbf{1}_{\cP}=\frac{|\cP|}{q^{(n+d)/2}}\bbv_0+\alpha_1\bbv_1+\alpha_2\bbv_2,
    \end{equation}
    where $\bbv_0,\bbv_1$ and $\bbv_2$ are right-singular vectors of $\bbT$ that correspond to singular values $q^{(d+1)n/2}$, $q^{dn/2}$ and $0$, respectively, and $|\alpha_1|^2\leq (1-\frac{1}{q^d})|\cP|$. 
    
    Similarly, by Lemma \ref{lem_proj_on_U}, we can express $\mathbf{1}_{\cV}$ as
    \begin{equation}\label{eq_indicator_vec_V}
        \mathbf{1}_{\cV}=\frac{|\cV|}{q^{d(n+1)/2}}\bbu_0+\beta_1\bbu_1+\beta_2\bbu_2,
    \end{equation}
    where $\bbu_0,\bbu_1$ and $\bbu_2$ are left-singular vectors of $\bbT$ that correspond to singular values  $q^{(d+1)n/2}$, $q^{dn/2}$ and $0$, respectively, and 
    $$|\beta_1|^2\leq \frac{|\cV|}{q^{(d-1)n}}\parenv{1-\frac{1}{q}+\frac{|\cV|}{q^d}(q^{d-1}-1)}.
    $$ 

    Recall that $\bbT$ is the $q^{d(n+1)}\times q^{n+d}$ incidence matrix between all the varieties $V_{\bba_1,\ldots,\bba_d}$ defined by (\ref{eq_variety_1}) and all the points in $\Fq^{n+d}$. By the singular value decomposition of $\bbT$, we have $$\bbT\cdot \bbv_0'=q^{(d+1)n/2}\bbu_0',$$ 
    and $\bbT\cdot \bbv_1'=q^{dn/2}\tilde{\bbu}_1'$, where $\tilde{\bbu}_1$ is a unit norm left-singular vector that is orthogonal to $\bbu_0$ and $\bbu_2$. Then, $I(\cP,\cV)$, the number of incidences between varieties of the form (\ref{eq_variety_1}) and points in $\Fq^{n+d}$ satisfies
    \begin{align}
        I(\cP,\cV)=\mathbf{1}_{\cV}\cdot \bbT\cdot \mathbf{1}_{\cP}'&=\mathbf{1}_{\cV}\cdot \bbT\cdot \left(\frac{|\cP|}{q^{(n+d)/2}}\bbv_0+\alpha_1\bbv_1+\alpha_2\bbv_2\right)' \nonumber\\
        &=\mathbf{1}_{\cV}\cdot \left(\frac{|\cP|}{q^{(n+d)/2}}q^{(d+1)n/2}\bbu_0+q^{dn/2}\alpha_1\tilde{\bbu}_1\right)',\label{eq1_incidence}
    \end{align}
    Plugging (\ref{eq_indicator_vec_V}) into (\ref{eq1_incidence}), we have
    \begin{align*}
        I(\cP,\cV)
        &=\left(\frac{|\cV|}{q^{d(n+1)/2}}\bbu_0+\beta_1\bbu_1+\beta_2\bbu_2\right)\cdot \left(\frac{|\cP|}{q^{(n+d)/2}}q^{(d+1)n/2}\bbu_0+q^{dn/2}\alpha_1\tilde{\bbu}_1\right)' \\
        &=\frac{|\cP||\cV|}{q^d}+q^{dn/2}\alpha_1\beta_1\langle \bbu_1,\tilde{\bbu}_1 \rangle.
    \end{align*}
    Hence, 
    \begin{align}
         &\abs{I(\cP,\cV)-\frac{|\cP||\cV|}{q^d}}
        =\abs{q^{dn/2}\alpha_1\beta_1\langle \bbu_1,\tilde{\bbu}_1 \rangle} 
        \leq q^{dn/2}\alpha_1\beta_1 \label{eq2_incidence}\\
        \leq & 
        q^{n/2}(1-\frac{1}{q^d})^{1/2}\sqrt{|\cP||\cV|}\parenv{1-\frac{1}{q}+\frac{|\cV|}{q^d}(q^{d-1}-1)}^{1/2} \label{eq3_incidence}.
    \end{align}
    where the inequality in (\ref{eq2_incidence}) follows from the fact that $\bbu_1,\tilde{\bbu}_1$ are unit vectors and (\ref{eq3_incidence}) follows from the upper bounds on $\alpha_1$ and $\beta_1$. When $d=1$, (\ref{eq3_incidence}) becomes $q^{n/2}(1-\frac{1}{q})\sqrt{|\cP||\cV|}$. When $d\geq 2$, (\ref{eq3_incidence}) is less than $$q^{n/2}\sqrt{|\cP||\cV|}\parenv{1+\frac{|\cV|}{q}}^{1/2}.$$ 
    This concludes the proof.
\end{proof}

\section{Applications}\label{sec: app}

\subsection{Point-flat incidences}

In this section, we present the proof of Theorem \ref{main_thm2} by applying Corollary \ref{coro_incidence_pts_flats} together with an averaging argument. For the reader's convenience, we restate the theorem below. 

\mainthmtwo*

Note that Corollary \ref{coro_incidence_pts_flats} only applies to a very specific family of $n$-flats as defined in \eqref{eq_flat_1}. Hence, one cannot apply it directly to prove Theorem \ref{main_thm2}. To circumvent this, we apply a simple averaging argument as follows.

Let $\mathcal{F}_0$ be the family of $n$-flats of the form \eqref{eq_flat_1}, and let $\mathcal{F}_1$ be the family of all $n$-flats in $\mathbb{F}_q^{n+d}$. Let $AGL(n+d,q)$ be the affine group that acts on $\mathbb{F}_q^{n+d}$. It is well known and easy to see that $AGL(n+d,q)$ acts transitively on $\cF_1$, and incidences are preserved under this action.

\begin{proof}[Proof of Theorem \ref{main_thm2}]
In the following, we will only prove the result for $d \geq 2$, as the proof for $d = 1$ follows similarly.

Let $g \in AGL(n+d,q)$ be an random element picked uniformly at random, and let $\mathcal{P}_g$ denote the image of the points in $\mathcal{P}$ under the action of $g$. Similarly, let $\mathcal{F}_g$ denote the image of the $n$-flats in $\mathcal{F}$ under the action of $g$ that are also in $\mathcal{F}_0$, i.e.,
\[ 
\mathcal{F}_g = \{g(F) : F \in \mathcal{F} \text{ and } g(F) \in \mathcal{F}_0\}.
\]

Then, by Corollary, \ref{coro_incidence_pts_flats}
\[
\left|I(\mathcal{P}_g, \mathcal{F}_g) - \frac{|\mathcal{P}_g||\mathcal{F}_g|}{q^d}\right| \leq q^{n/2} \sqrt{|\mathcal{P}_g||\mathcal{F}_g|} \left(1 + \frac{|\mathcal{F}_g|}{q}\right)^{1/2}.
\]
Note that $|\cP_g|=|\cP|$ holds for every $g\in AGL(n+d,q)$. Thus, taking the expectation on both sides of the above inequality, we get
\begin{equation}
\label{stamstam}
\left|\mathbb{E}\left[I(\mathcal{P}_g, \mathcal{F}_g)\right] - \frac{|\mathcal{P}|\mathbb{E}\left[|\mathcal{F}_g|\right]}{q^d}\right| \leq \mathbb{E}\left[q^{n/2} \sqrt{|\mathcal{P}_g||\mathcal{F}_g|} \left(1 + \frac{|\mathcal{F}_g|}{q}\right)^{1/2}\right].
\end{equation}

Next, if $p \in \mathcal{P}$ and $F \in \mathcal{F}$ form an incidence, i.e., $p \in F$, then their image under $g$ forms an incidence in $\mathcal{P}_g$ and $\mathcal{F}_g$ if and only if $g(F) \in \mathcal{F}_0$. Since the action of the affine group is transitive, or equivalently, by symmetry, $g(F) \in \mathcal{F}_0$ with probability $|\mathcal{F}_0| / |\mathcal{F}_1|$, hence 
\begin{equation}
\label{stam1}
\mathbb{E}\left[I(\mathcal{P}_g, \mathcal{F}_g)\right] = \frac{|\mathcal{F}_0|}{|\mathcal{F}_1|} I(\mathcal{P}, \mathcal{F}).
\end{equation}

By the same reasoning, we also have 
\begin{equation}
\label{stam2}
\mathbb{E}\left[|\mathcal{F}_g|\right] = \frac{|\mathcal{F}_0|}{|\mathcal{F}_1|} |\mathcal{F}|.
\end{equation}

Consider the function $f(x) = \sqrt{x + \frac{x^2}{q}}$ and note that it is concave for $x \geq 0$. Then, by Jensen's inequality and (\ref{stam2}), the RHS of \eqref{stamstam} is at most 
\begin{equation}
\label{stam3}
q^{n/2} \sqrt{|\mathcal{P}|} f\left(\frac{\sum_{g \in AGL(n+d,q)} |\mathcal{F}_g|}{|AGL(n+d,q)|}\right) = q^{n/2} \sqrt{|\mathcal{P}|} f\left(\frac{|\mathcal{F}_0||\mathcal{F}|}{|\mathcal{F}_1|}\right).
\end{equation}

By combining \eqref{stamstam}, \eqref{stam1}, \eqref{stam2}, and \eqref{stam3} and rearranging, we get that 
\begin{align}
\left|I(\mathcal{P}, \mathcal{F}) - \frac{|\mathcal{P}||\mathcal{F}|}{q^d}\right| 
& \leq \frac{|\mathcal{F}_1|}{|\mathcal{F}_0|} q^{n/2} \sqrt{|\mathcal{P}|} f\left(\frac{|\mathcal{F}_0||\mathcal{F}|}{|\mathcal{F}_1|}\right) \nonumber \\
& = \frac{|\mathcal{F}_1|}{|\mathcal{F}_0|} q^{n/2} \sqrt{|\mathcal{P}|} \sqrt{\frac{|\mathcal{F}_0||\mathcal{F}|}{|\mathcal{F}_1|} + \left(\frac{|\mathcal{F}_0||\mathcal{F}|}{|\mathcal{F}_1|}\right)^2 / q} \nonumber \\
& = q^{n/2} \sqrt{|\mathcal{P}|} \sqrt{\frac{|\mathcal{F}_1||\mathcal{F}|}{|\mathcal{F}_0|} + \frac{|\mathcal{F}|^2}{q}}. \label{stam4}
\end{align}

Now, it is left to determine the size of $\mathcal{F}_0$ and $\mathcal{F}_1$. Each $n$-flat of $\mathbb{F}_q^{n+d}$ is simply a coset of some $n$-dimensional subspace. Since there are $\qbinom{n+d}{n}_q$ $n$-dimensional subspaces in $\mathbb{F}_q^{n+d}$, and each has $q^d$ cosets, the size of $\mathcal{F}_1$ is
$q^d \qbinom{n+d}{n}_q$.

Lastly, we need to count the number of $n$-flats $F_{\mathbf{a}_1, \ldots, \mathbf{a}_d}$ of the form (\ref{eq_flat_1}). Since each such flat is uniquely defined by the $d$ vectors $\mathbf{a}_1, \ldots, \mathbf{a}_d$ with $\mathbf{a}_i \in \mathbb{F}_q^{n+1}$, there are $q^{d(n+1)}$ such flats.

Plugging this into \eqref{stam4}, we get
\[
\left|I(\mathcal{P}, \mathcal{F}) - \frac{|\mathcal{P}||\mathcal{F}|}{q^d}\right| \leq q^{n/2} \sqrt{|\mathcal{P}||\mathcal{F}|} \sqrt{\frac{\qbinom{n+d}{n}_q}{q^{dn}} + \frac{|\mathcal{F}|}{q}},
\]
as needed.
\end{proof}

\begin{remark}\label{rmk_rich_flats/points}
    Using their points-flats incidence bound, Lund and Saraf \cite{LS14} obtained several results analogous to Beck's Theorem \cite{Beck83} on the number of $t$-rich points and $t$-rich flats \footnote{Given a set of points $\cP$ and a set of $n$-flats $\cF$ in $\Fq^{n+d}$, a point is called $t$-rich if it's contained in at least $t$ different $n$-flats in $\cF$, and an $n$-flat is called $t$-rich if it contains at least $t$ different points in $\cF$}. These results state that one can find many $t$-rich points ($n$-flats) in the points-flats incidence graph as long as the number of $n$-flats (points) is large enough.

    One may wonder if we can use Theorem \ref{main_thm2} to improve Lund and Saraf's results on $t$-rich points and $t$-rich flats in \cite{LS14}. Unfortunately, since Theorem \ref{main_thm2} improves Theorem \ref{Lund's_thm} in \cite{LS14} only when the number of $n$-flats is relatively small, we cannot obtain better results using Theorem \ref{main_thm2}.
\end{remark}

\subsection{Pinned distances problem}

In this section, by a similar argument as in \cite{CILRR17} (see also \cite{PPSVV16} and \cite{PTV17}), we use Theorem \ref{main_thm1} to prove Corollary \ref{pinned_distance_improved}. For the reader's convenience, we restate the corollary below. 

\coropinneddistance*

\begin{proof}
    Fix a point $ \bbp = (p_1,\ldots,p_n) \in \cP $, recall that the variety defined by $ \bbp $ in Example \ref{ex2} is  
    $$V({\bbp}) \triangleq \left\{(x_1,\ldots,x_{n+1}) \in \mathbb{F}_q^{n+1} :~ x_{n+1} = \sum_{i=1}^n (x_i - p_i)^2 \right\}.$$
    Note that $\sum_{i=1}^n(x_i-p_i)^2=\sum_{i=1}^{n}x_i^2-\sum_{i=1}^{n}2p_i x_i+\sum_{i=1}^{n}p_i^2$. Thus, $V({\bbp})$ is the variety 
    $$V_{\bba}=V(x_{n+1}-(h(\bbx)+f_{\bba}(\bbx)))$$
    defined in (\ref{eq_variety_1}) with $h(\bbx)=\sum_{i=1}^{n}x_i^2$, $\bbb=(1,\ldots,1)$, $\bba=(-2p_1,\ldots,-2p_n,\sum_{i=1}^{n}p_i^2)$ and $f_{\bba}(\bbx)=-\sum_{i=1}^{n}2p_i x_i+\sum_{i=1}^{n}p_i^2$. Let $\cV=\{V({\bbp}):~\bbp\in \cP\}$. Since $V({\bbp})$ is the only variety in $\cV$ that contains the vector $(\bbp,0)$, the mapping from $\cP$ to $\cV$ that maps  $\bbp\in \cP$ to  $V({\bbp})$ is a bijection. Thus, we have $|\cV|=|\cP|$. 
    
    Define 
    $$\tilde{\cP}\triangleq\{(\bbp,t)\in \Fq^{n+1}:~\bbp\in \cP \text{ and } t\in \Delta(\cP,\bbp)\},$$
    and note that $|\tilde{\cP}|=\sum_{\bbp\in \cP}|\Delta(\cP,\bbp)|$. Then, it follows from Theorem \ref{main_thm1} for the case $d=1$ that
    \begin{equation}\label{eq_pin_distance_improved}
        \abs{I(\tilde{\cP},\cV)-\frac{|\tilde{\cP}||\cV|}{q}}\leq q^{n/2}(1-\frac{1}{q})\sqrt{|\tilde{\cP}||\cV|}.
    \end{equation}

    On the other hand, notice that a point $(\bbp,t)\in\tilde{\cP}$ is contained in $V(\tilde{\bbp})$ for some $\tilde{\bbp}\in \cP$ if and only if $t=\sum_{i=1}^{n}(p_i-\tilde{p_i})^2$. Hence, each ordered pair of points $(\bbp,\tilde{\bbp})\in \cP^2$ induces a unique incidence $\parenv{(\bbp,\sum_{i=1}^{n}(p_i-\tilde{p_i})^2),V(\tilde{\bbp})}$ in $I(\cP,\cV)$. This implies that $I(\tilde{\cP},\cV)=|\cP|^2$. Therefore, by (\ref{eq_pin_distance_improved}) and $|\cV|=|\cP|$, we have
    \begin{align}
        |\cP|^2&\leq \frac{|\tilde{\cP}||\cV|}{q}+q^{n/2}(1-\frac{1}{q})\sqrt{|\tilde{\cP}||\cV|}\nonumber\\
        &= \frac{|\cP|\sum_{\bbp\in \cP}|\Delta(\cP,\bbp)|}{q}+q^{n/2}(1-\frac{1}{q})\sqrt{|\cP|\sum_{\bbp\in \cP}|\Delta(\cP,\bbp)|}. \label{eq2_pin_distance_improved}
    \end{align}   
    If $\sum_{\bbp\in \cP}|\Delta(\cP,\bbp)|< (1-\varepsilon)q|\cP|$, then it follows from (\ref{eq2_pin_distance_improved}) that
    $$|\cP|^2< |\cP|^2(1-\varepsilon)+q^{\frac{n+1}{2}}(1-\frac{1}{q})|\cP|\sqrt{1-\varepsilon}.$$
    Equivalently,  $|\cP|< \frac{\sqrt{1-\varepsilon}}{\varepsilon}(q^{\frac{n+1}{2}}-q^{\frac{n-1}{2}})$, which   contradicts the assumption on the size of  $\cP$. Thus, $\frac{\sum_{\bbp\in\cP}|\Delta(\cP,\bbp)|}{|\cP|}\geq(1-\varepsilon)q$.

    Now, let $\cQ\triangleq \{\bbp\in \cP:~|\Delta(\cP,\bbp)|>(1-\sqrt{\varepsilon})q\}$. Suppose that $|\cQ|<(1-\sqrt{\varepsilon})|\cP|$, then we have
    \begin{align*}
        \sum_{\bbp\in \cP}|\Delta(\cP,\bbp)|&=\sum_{\bbp\in \cQ}|\Delta(\cP,\bbp)|+\sum_{\bbp\in \cP\setminus \cQ}|\Delta(\cP,\bbp)|\\
        &< q|\cQ|+(1-\sqrt{\varepsilon})q(|\cP|-|\cQ|)=(1-\sqrt{\varepsilon})q|\cP|+\sqrt{\varepsilon}q|\cQ|\\
        &<(1-\varepsilon)q|\cP|,
    \end{align*}
    which contradicts the former result that $\sum_{\bbp\in\cP}|\Delta(\cP,\bbp)|\geq(1-\varepsilon)q|\cP|$.
\end{proof}

\begin{remark}\label{rmk_pinned_distance}
    In \cite{PTV17}, Phuong et al. proved a bound on the number of incidences between points and generalized spheres. Using this bound, they generalized the result of Theorem \ref{pinned_distance_Chapman12} to pinned $Q$-distance problem, where $Q(\bbx)=\sum_{i=1}^{n}a_ix^{c_i}$ satisfies $2\leq c_i\leq N$ and $\gcd(c_i,q)=1$, and the set of $Q$-distances between the point $\bby$ and the set $\cP$ is defined as $\Delta_Q(\cP,\bby)\triangleq\left\{Q(\bbx-\bby):~\bbx\in \cP\right\}.$

    For $\cP\subseteq \Fq^{n}$ and $\bbp\in \cP$, one can define $V(\bbp)\triangleq\left\{(\bbx,x_{n+1})\in \Fq^{n+1}:~x_{n+1}=Q(\bbx-\bbp)\right\}$ and easily examine that $V(\bbp)$ is a special kind of variety defined in (\ref{eq_variety_1}). Then, by applying similar methods as those used in the proof of Corollary \ref{pinned_distance_improved}, we can also derive Phuong et al.'s result concerning the pinned $Q$-distance problem under a slightly weaker condition.
\end{remark}

\begin{remark}\label{rmk2_pinned_distance}
It is worth noting that the best-known results on the pinned distance problem for the case $d = 2$ over $\mathbb{F}_q$ are presented in \cite{MPPRS22} when $q$ is a prime, and in \cite{LSD16} for general $q$. These results were derived using a completely different approach, which relies on bounding the bisector energy quantity. 

The concept of bisector energy, first introduced by Lund, Sheffer, and De Zeeuw \cite{LSD16}, is defined as the number of pairs of segments with endpoints in the point set $ \cP $ that are symmetric with respect to a certain line, referred to as the bisector line.
\end{remark}

\section{Concluding remarks}\label{sec: conclusion}

In this paper, using spectral graph theory, we study an incidence problem for points and varieties of a certain form over finite fields. By a full characterization of the zero eigenspace of the considered points-varieties incidence graph, we show that the zero eigenspace of the incidence graph has a large dimension, which leads to improved incidence bounds when the size of the variety set is relatively small.

Note that the zero eigenvalue of any unbalanced bipartite graph $G=(A\cup B, E)$ ($|B|>|A|$) has multiplicity at least $|B|-|A|$. Thus, it's natural to wonder whether one can prove an ``expander crossing lemma'' for unbalanced bipartite graphs using the same approach as in the proof of Theorem \ref{main_thm1}. Recall that for the proof of Theorem \ref{main_thm1}, we need to determine the zero eigenspace of the graph. Since different graphs can have very different zero eigenspaces, one might not get good results for the general case. Nevertheless, it's still interesting if one can improve the result of Lemma \ref{expander_mix_bipartite} for certain classes of unbalanced bipartite graphs.

Recently, combinatorial tools are found to be powerful and useful for the study of incidence problems. One notable work is \cite{MST24} by Milojevi\'{c} et al., where the authors proved several incidence bounds for points and different geometric objects under non-degeneracy conditions using extremal graph theory. Their result matches the best known point-hyperplane incidence bounds in $\mathbb{R}^d$ by Apfelbaum and Sharir \cite{AS07} using geometrical space partitioning techniques. As another example, Iosevich et al. \cite{IPST24} improved the point-line incidence bound over finite fields by Vinh \cite{Vinh11} using tools from the VC-dimension theory. Hence, it would be also interesting to adopt these combinatorial tools to further improve results in this paper.

\section*{Acknowledgement}

We sincerely thank Prof.Jonathan M.Fraser and Dr.Firdavs Rakhmonov for kindly pointing out an error in the version of Theorem1.6 (namely, \cite[Theorem2.1]{BIP14}) that we cited in an earlier version of this paper, and for drawing our attention to the relevant works in \cite{Haemers80,BIP14_arxiv} related to Theorem~\ref{Lund's_thm}.

\bibliographystyle{plain}
\bibliography{biblio}

\end{document}